\documentclass[12pt]{amsart}

\usepackage{amsfonts,latexsym}
\usepackage{amsmath}                      
\usepackage{amsthm,amscd,amssymb} 
\usepackage{dsfont}
\usepackage{graphicx}
\usepackage{epsfig}
\usepackage[latin1]{inputenc}
\usepackage[T1]{fontenc}

\hoffset=-0.5in
\textwidth=6in

\newtheorem{theorem}{Theorem}[section]
\newtheorem{lemma}[theorem]{Lemma}
\newtheorem{prop}[theorem]{Proposition}
\newtheorem{cor}[theorem]{Corollary}

\newtheorem{assumption}[theorem]{Assumption}

\theoremstyle{definition}

\newtheorem{remark}[theorem]{Remark}

\numberwithin{equation}{section}

\newcommand{\rr}{{\mathbb R}}
\newcommand{\rd}{{\mathbb R^d}}

\newcommand{\nat}{{\mathbb N}}
\newcommand{\ganz}{{\mathbb Z}}
\newcommand{\Exp}{{\mathbb E}}
\newcommand{\Ker}{\operatorname{Ker}}

\newcommand{\supp}{\operatorname{supp}}

\newcommand{\eqd}{\stackrel{\rm d}{=}}

\newcommand{\Prob}{\mathbb P}
\allowdisplaybreaks

\begin{document}

\sloppy
\title[Exact Hausdorff Measure functions]{On exact Hausdorff Measure Functions of operator semistable L\'evy processes} 

\author{Peter Kern}
\address{Peter Kern, Mathematical Institute, Heinrich-Heine-University D\"usseldorf, Universit\"atsstr. 1, D-40225 D\"usseldorf, Germany}
\email{kern\@@{}hhu.de} 

\author{Lina Wedrich}
\address{Lina Wedrich, Mathematical Institute, Heinrich-Heine-University D\"usseldorf, Universit\"atsstr. 1, D-40225 D\"usseldorf, Germany}
\email{lina.wedrich\@@{}uni-duesseldorf.de} 

\date{\today}

\begin{abstract}
Let $X=\{X(t)\}_{t\geq0}$ be an operator semistable L\'evy process on $\mathbb{R}^d$ with exponent $E$, where $E$ is an invertible linear operator on $\mathbb{R}^d$. In this paper we determine exact Hausdorff measure functions for the range of $X$ over the time interval $[0,1]$ under certain assumptions on the principal spectral component of $E$. As a byproduct we also present Tauberian results for semistable subordinators and sharp bounds for the asymptotic behavior of the expected sojourn times of $X$.
\end{abstract}

\keywords{operator semistable L\'evy process, sample path, range, semi-selfsimilarity, exact Hausdorff measure, gauge function, expected sojourn time, semistable subordinator, Tauberian theorem}
\subjclass[2010]{Primary 60G51; Secondary 28A78, 28A80, 60G17, 60G18, 60G51, 60G52} 
\thanks{This work was supported by Deutsche Forschungsgemeinschaft (DFG) under grant KE1741/6-1}

\maketitle

\baselineskip=18pt

% ------------------ Introduction ---------------------------------

\section{Introduction}

Let $X=\{X(t)\}_{t\geq0}$ be a L\'evy process on $\rd$. More precisely, $X$ is a stochastically continuous process with c\`adl\`ag paths and stationary and independent increments that starts in $X(0)=0$ almost surely. Then the distribution of $X$ is uniquely determined by the distribution of $X(1)$ which can be an arbitrary infinitely divisible distribution. For $c>1$ and a linear operator $E$ on $\rd$ we call the L\'evy process $X$ $(c^E,c)$-operator semistable if the distribution of $X(1)$ is full, i.e. not supported on any lower dimensional hyperplane, and
\begin{equation}\label{opsem}
\left\{X(ct)\right\}_{t\geq0}\stackrel{\text{fd}}{=} \left\{c^E X(t)\right\}_{t\geq0},
\end{equation}
where $\stackrel{\text{fd}}{=}$ denotes equality of all finite-dimensional distributions and
$$c^E:=\sum_{n=0}^\infty \frac{(\log c)^n}{n!}E^n.$$
The linear operator $E$ is referred to as the exponent of the operator semistable L\'evy process $X$. If \eqref{opsem} holds for all $c>0$, the L\'evy process is called operator stable. If the exponent $E$ is a multiple of the identity, i.e.\ $E=1/\alpha\cdot I$, where necessarily $\alpha\in(0,2]$, the process $X$ is simply called $(c^{1/\alpha},c)$-semistable. In case \eqref{opsem} holds for all $c>0$, the L\'evy process is called operator stable with exponent $E$, or $\alpha$-stable in case $E=1/\alpha\cdot I$, where $\alpha=2$ refers to the Brownian motion case.

In the past, efforts have been made to generate results on exact Hausdorff measure functions for the range of stable L\'evy processes. The case of Brownian motion was studied by Ciesielski and Taylor \cite{CT,T1}. An exact Hausdorff measure function for the range of an $\alpha$-stable L\'evy process was formulated by Taylor \cite{T}. It turned out that the gauge function depends on whether the continuous Lebesgue density of $X(1)$ is positive or zero in the origin, which by Taylor were called stable processes of type $A$ or type $B$, respectively. Furthermore, Pruitt and Taylor \cite{PT} studied sample path properties of L\'evy processes with independent stable components, including exact Hausdorff measures. Based on their work, Hou and Ying \cite{HY} determined an exact Hausdorff measure function for the range of certain operator stable L\'evy process of type $A$ with diagonal exponent $E$.  They emphasize without proof that similar methods also lead to an exact Hausdorff measure function for type $B$, see Remark 1 in \cite{HY}. Corresponding results for the graph are presented in \cite{Hou}. For an overview on general dimension results for L\'evy processes see \cite{KX} and \cite{Xiao}.

Our aim is to generalize the results of Hou and Ying in three respects. Firstly, we consider the more general class of operator semistable L\'evy process with the weaker discrete scaling \eqref{opsem}, secondly, we will relax the assumption that the exponent $E$ should be diagonal and show that it suffices to require diagonality for a principal spectral component; see Section 2.2 for details. Lastly, we will also derive exact Hausdorff measure functions for type B which turned out to be more challenging than asserted in Remark 1 of \cite{HY}. The Hausdorff dimension for the range and the graph of operator semistable L\'evy processes have recently been determined in \cite{KW} and \cite{W}, respectively; see also \cite{KMX} for an alternative derivation based on an index formula presented in \cite{KXZ}. The special case of the limit process in subsequent coin-tossing games of the famous St.\ Petersburg paradox has been studied in \cite{KW2} in detail.

The methods applied in this paper are similar to the ones used in \cite{T} and \cite{HY} with complementary work necessary to handle the weaker semistable scaling \eqref{opsem} or the non-diagonality of the operator $E$. The paper is structured as follows. Section 2.1 gives the definition of an exact Hausdorff measure function for an arbitrary Borel set $F\subseteq\rd$. In Section 2.2 we recall spectral decomposition results as stated in \cite{MS} which enable us to decompose the operator semistable L\'evy process $X$ according to the distinct real parts of the eigenvalues of the exponent $E$. Sharp bounds for the expected sojourn times of operator semistable L\'evy processes are presented in Section 2.3. In case $X$ is of type $B$ we will need an appropriate estimate of the behavior of the process near the origin following from certain Tauberian results presented in Section 2.4. The main results are stated and proven in Section 3 and 4, respectively. 

Throughout this paper, $K$ denotes an unspecified positive and finite constant that can vary in each occurrence, whereas fixed constants will be denoted by $K_1, \tilde K_1, C_1, K_2, \tilde K_2, C_2$, etc.

\section{Preliminaries}
\subsection{Exact Hausdorff measure functions}
A function $\phi$ is said to belong to the class $\Phi$ if there exists a constant $\delta>0$ such that $\phi$ is right-continuous and increasing on the open interval $(0,\delta)$, $\phi(0+)=0$ and fulfills the doubling property, i.e. there exists a constant $K_1>0$ such that
\begin{equation}\label{PropPhi}
\frac{\phi(2s)}{\phi(s)}\leq K_1 \quad \text{ for all } 0<s<\frac12\delta.
\end{equation} 
For a function $\phi\in\Phi$ the $\phi$-Hausdorff measure of an arbitrary Borel set $F\subseteq\rd$ is then defined as
\begin{equation}
\phi-m(F)=\liminf_{\epsilon\to\infty}\left\{\sum_{i=1}^\infty \phi(|F_i|):F\subseteq\bigcup_{i=1}^{\infty}F_i,|F_i|<\epsilon\right\},
\end{equation}
where $|F|=\sup\{\|x-y\|:x,y\in F\}$ denotes the diameter of a set $F\subseteq\rd$ and $\|\cdot\|$ is the Euclidean norm. The function $\phi\in\Phi$ is called an exact Hausdorff measure function for $F\subseteq\rd$ if $0<\phi-m(F)<\infty$. We refer to \cite{Fal} for a comprehensive introduction to exact Hausdorff measures. We emphasize that all the gauge functions $\phi$ appearing in this paper belong to the class $\Phi$.

For an arbitrary Borel measure $\mu$ on $\rd$ and a function $\phi\in\Phi$, the upper $\phi$-density of $\mu$ at $x\in\rd$ is defined as 
\begin{equation}
\overline{D}_\mu^\phi=\limsup_{r\to0}\frac{\mu(B(x,r))}{\phi(2r)},
\end{equation}
where $B(x,r)$ denotes the closed ball with radius $r$ centered at $x$. The following lemma is similar to Lemma 2.1 in \cite{HY} and is a direct consequence of the results in \cite{RT}.

\begin{lemma}\label{UpperPhi}
	For a given $\phi\in\Phi$, there exists a positive constant $K_2$ such that for any Borel measure $\mu$ on $\rd$ and every Borel set $F\subseteq\rd$, we have
	\begin{equation}
	\phi-m(F)\geq K_2\;\mu(F)\inf_{x\in F} \frac{1}{\overline{D} _\mu^\phi(x)}.
	\end{equation}
\end{lemma}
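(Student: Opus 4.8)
The statement is a standard "density" or "mass distribution" lower bound for Hausdorff measure, and the plan is to deduce it from the Vitali-type covering results of Rogers and Taylor \cite{RT}. The key observation is the following dichotomy for points where the upper $\phi$-density is bounded: if $\overline{D}_\mu^\phi(x)\leq\lambda$ for every $x$ in a set $A$, then one should expect $\phi-m(A)\geq c_\lambda\,\mu(A)$ for some constant $c_\lambda$ depending only on $\lambda$ (and the doubling constant), with $c_\lambda$ scaling like $1/\lambda$. First I would recall precisely the relevant lemma from \cite{RT}: for any $\phi\in\Phi$ there is an absolute constant $K_2>0$ such that for every Borel measure $\mu$, every Borel set $A$, and every $\lambda>0$,
\begin{equation}\label{eq:RTbound}
\text{if } \overline{D}_\mu^\phi(x)\leq\lambda \text{ for all } x\in A,\quad\text{then}\quad \phi-m(A)\geq K_2\,\lambda^{-1}\mu(A).
\end{equation}
This is exactly where the doubling property \eqref{PropPhi} enters, since it guarantees that $\phi(2r)$ and $\phi(r)$ are comparable and allows the passage from a cover by arbitrary sets to a cover by balls of comparable $\phi$-value; I would cite \cite{RT} (or Lemma 2.1 of \cite{HY}) for this rather than reprove it.

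Given \eqref{eq:RTbound}, the deduction of Lemma \ref{UpperPhi} is a short argument. Write $\Lambda:=\sup_{x\in F}\overline{D}_\mu^\phi(x)$, so that $\inf_{x\in F}1/\overline{D}_\mu^\phi(x)=1/\Lambda$ (with the usual conventions if $\Lambda=0$ or $\Lambda=\infty$). If $\Lambda=\infty$ the claimed bound is vacuous, so assume $\Lambda<\infty$. For any $\lambda>\Lambda$ we have $\overline{D}_\mu^\phi(x)\leq\lambda$ for every $x\in F$, hence \eqref{eq:RTbound} applied with $A=F$ gives $\phi-m(F)\geq K_2\lambda^{-1}\mu(F)$. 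Letting $\lambda\downarrow\Lambda$ yields $\phi-m(F)\geq K_2\Lambda^{-1}\mu(F)=K_2\,\mu(F)\inf_{x\in F}1/\overline{D}_\mu^\phi(x)$, which is the assertion. The case $\Lambda=0$ (where the infimum is $+\infty$) forces $\mu(F)=0$ unless $\phi-m(F)=\infty$, and either way the inequality holds; this boundary case should be dispatched in a sentence.

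The only genuine content is \eqref{eq:RTbound}, and so the main obstacle — really the only one — is making sure the constant $K_2$ can be taken to depend only on $\phi$ (through its doubling constant $K_1$) and not on $\mu$ or $F$. This uniformity is precisely what \cite{RT} provides: the covering argument there produces, for each $\epsilon>0$ and each $x\in A$, a ball $B(x,r_x)$ with $r_x<\epsilon$ and $\mu(B(x,r_x))>\lambda^{-1}\phi(2r_x)/2$ (say), and then a Vitali-type selection extracts a disjoint subfamily covering a definite fraction of $A$ in $\mu$-measure, so that $\sum_i\phi(2r_{x_i})\leq 2\lambda\sum_i\mu(B(x_i,r_{x_i}))\leq 2\lambda\,\mu(\rd)$ locally, with the doubling property converting control of $\phi(2r_{x_i})$ into control of $\phi$ of the diameters of a covering family. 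Since we are only invoking this as a black box, the proof is complete once \eqref{eq:RTbound} is quoted; I would simply remark that the argument is identical to the proof of Lemma 2.1 in \cite{HY}, the only change being that the gauge functions here are allowed to be merely right-continuous and increasing on an initial interval rather than globally, which is harmless because Hausdorff measure only sees the behavior of $\phi$ near $0$.
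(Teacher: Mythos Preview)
Your proposal is correct and matches the paper's approach: the paper does not give a proof at all but simply states that the lemma ``is similar to Lemma 2.1 in \cite{HY} and is a direct consequence of the results in \cite{RT}.'' You supply exactly this citation together with the short deduction from the Rogers--Taylor density bound, which is more detail than the paper itself provides.
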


\subsection{Spectral decomposition}\label{subsecSpecDec}

Let $X$ be a $(c^E,c)$-operator semistable L\'evy process. Factor the minimal polynomial of $E$ into $q_1(x)\cdot\ldots\cdot q_p(x)$ where all roots of $q_i$ have real parts equal to $a_i$ and $a_i\neq a_j$ for $i\neq j$. Without loss of generality, one can additionally assume that $a_i<a_j$ for $i<j$. Note that $a_j\geq\frac12$ for all $j\in\{1,\ldots p\}$ by Theorem 7.1.10 in \cite{MS}.  Define $V_j=\Ker(q_j(E))$. According to Theorem 2.1.14 in \cite{MS} $V_1\oplus\cdots\oplus V_p$ is then a direct sum decomposition of $\rd$ into $E$ invariant subspaces. In an appropriate basis, $E$ is then block-diagonal and we may write $E=E_1\oplus\cdots\oplus E_p$ where $E_j:V_j\rightarrow V_j$ and every eigenvalue of $E_j$ has real part equal to $a_j$. Especially, every $V_j$ is an $E_j$-invariant subspace of dimension $d_j= \dim V_j$ and $d=d_1+\ldots+d_p$. Write $X(t)=X^{(1)}(t)+\ldots+X^{(p)}(t)$ with respect to this direct sum decomposition, where by Lemma 7.1.17 in \cite{MS}, $X^{(j)}=\{X^{(j)}(t)\}_{t\geq0}$ is a $(c^{E_j},c)$-operator semistable L\'evy process on $V_j$. We can now choose an inner product $\langle\cdot,\cdot\rangle$ on $\rd$ such that the $V_j, j\in\{1, \ldots,p\}$, are mutually orthogonal and throughout this paper we will let $\|x\|=\sqrt{\langle x, x\rangle}$  be the associated Euclidean norm. In particular we have for $t=c^r m >0$ that
\begin{align}
\|X(t)\|^2\eqd\|c^{rE}X(m)\|^2=\|c^{rE_1}X^{(1)}(m)\|^2+\ldots+\|c^{rE_p}X^{(p)}(m)\|^2,
\end{align}
with $r\in\mathbb{Z}$ and $m\in[1,c)$. 

Throughout this paper, we will denote by $\alpha_j=1/a_j$ the reciprocals of the real parts of the eigenvalues of the exponent $E$. We assume that the process $X$ has no Gaussian component in which case $0<\alpha_p<\ldots<\alpha_1<2$. Note that in this paper, we will only consider operator semistable L\'evy processes with diagonal principal exponent, i.e.\ $E_1=\alpha_1^{-1}\cdot I^{d_1}$, where $I^{d_1}$ denotes the identity operator on the $d_1$-dimensional subspace $V_1$. Since $V_1\cong\rr^{d_1}$ we may consider $X^{(1)}$ as an operator semistable L\'evy process on $\rr^{d_1}$ with diagonal exponent $E_1=\alpha_1^{-1}\cdot I^{d_1}$ and identity matrix $I^{d_1}\in\rr^{d_1\times d_1}$. Unless otherwise stated, throughout this paper there will be no restriction on all the other spectral components $j=2,\ldots,p$, i.e.\ $X^{(j)}$ is an arbitrary $(c^{E_j},c)$-operator semistable L\'evy process on $V_j\cong\rr^{d_j}$, where the real part of any eigenvalue of the exponent $E_j$ is equal to $a_j=\alpha_j^{-1}\in(0,2)$, but in general we do not assume that $E_j$ is diagonal for $j=2,\ldots,p$.

\subsection{Expected sojourn times}
For a L\'evy process $X=\{X(t):t\geq0\}$ let 
	$$
	T(a,s) = \int_0^s 1_{B(0,a)}(X(t))dt,
	$$
be the sojourn time up to time $s>0$ in the closed ball $B(0,a)$ with radius $a>0$ and centered at the origin. We now determine sharp upper and lower bounds for the expected sojourn times $\Exp[T(a,s)]$ of an operator semistable L\'evy process with diagonal exponent $E$. Although, in this paper we only need the result for $\alpha_1<d_1$, for completeness we also include the result for $\alpha_1>d_1$.

\begin{lemma}\label{sojourndiag}
	Let $X$ be a $(c^E,c)$-operator semistable L\'evy process on $\rd$ with diagonal principal exponent $E_1$.
	\begin{itemize}
		\item[(i)] If $\alpha_1<d_1$, there exist constants $K_4$, $K_5>0$ such that for all $0<a\leq1$ and $a^{\alpha_1}\leq s\leq1$,
		$$
		K_4 a^{\alpha_1}\leq\Exp[T(a,s)]\leq K_5 a^{\alpha_1}.
		$$
		\item[(ii)] If $d\geq2$ and $\alpha_1>d_1$ then $d_1=1$ and we further assume that $E_2$ is diagonal. Then there exist constants $K_6, K_7>0$ such that for all $a>0$ small enough, say $0<a\leq a_0$, and all $a^{\alpha_2}\leq s\leq1$,
		$$
		K_6 a^{\rho}\leq\Exp[T(a,s)]\leq K_7 a^{\rho},
		$$
		where $\rho=1+\alpha_2(1-1/\alpha_1)$.
	\end{itemize}
\end{lemma}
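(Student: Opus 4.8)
The plan is to estimate $\Exp[T(a,s)] = \int_0^s \Prob(\|X(t)\| \leq a)\, dt$ by splitting the process according to the spectral decomposition $X(t) = X^{(1)}(t) + \ldots + X^{(p)}(t)$ and using the mutual orthogonality of the $V_j$. Since $\|X(t)\|^2 = \sum_j \|X^{(j)}(t)\|^2$, the event $\{\|X(t)\| \leq a\}$ is contained in $\{\|X^{(1)}(t)\| \leq a\}$, which gives an immediate upper bound $\Prob(\|X(t)\| \leq a) \leq \Prob(\|X^{(1)}(t)\| \leq a)$; for the lower bound one intersects the events $\{\|X^{(j)}(t)\| \leq a/\sqrt p\}$ and uses independence of the components. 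In case (i), the component $X^{(1)}$ is a genuine $(c^{1/\alpha_1},c)$-semistable L\'evy process on $\rr^{d_1}$, so its semi-selfsimilarity $X^{(1)}(ct) \eqd c^{1/\alpha_1} X^{(1)}(t)$ lets one reduce the probability $\Prob(\|X^{(1)}(t)\| \leq a)$ to the regime $t \in [1,c)$ after scaling. Writing $t = c^r m$ with $m \in [1,c)$ and $a = c^{r/\alpha_1} b$ suitably, one gets $\Prob(\|X^{(1)}(t)\| \leq a) = \Prob(\|X^{(1)}(m)\| \leq b)$ with $b \asymp a t^{-1/\alpha_1}$, and standard estimates on the density of $X^{(1)}(m)$ (which is bounded and bounded away from $0$ on compacts, uniformly for $m \in [1,c)$, since $\alpha_1 < d_1$ forces the relevant integrability) yield $\Prob(\|X^{(1)}(t)\| \leq a) \asymp \min\{1, (a\, t^{-1/\alpha_1})^{d_1}\}$.

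For the upper bound in (i) I would then write $\Exp[T(a,s)] \leq \int_0^s \Prob(\|X^{(1)}(t)\| \leq a)\, dt \leq K \int_0^s \min\{1,(a t^{-1/\alpha_1})^{d_1}\}\, dt$; splitting the integral at $t = a^{\alpha_1}$ gives a contribution $a^{\alpha_1}$ from $t \leq a^{\alpha_1}$ and a contribution $a^{d_1} \int_{a^{\alpha_1}}^s t^{-d_1/\alpha_1}\, dt \asymp a^{d_1} \cdot (a^{\alpha_1})^{1 - d_1/\alpha_1} = a^{\alpha_1}$ from the tail (using $d_1/\alpha_1 > 1$, so the integral is dominated by its lower endpoint and is finite regardless of $s \leq 1$). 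This produces $\Exp[T(a,s)] \leq K_5 a^{\alpha_1}$. For the lower bound, restricting the time integral to $[a^{\alpha_1}/2, a^{\alpha_1}]$ and using the lower bound on the joint density — here one needs the other components $X^{(j)}$, $j \geq 2$, to also put mass of order $1$ in a ball of radius $a/\sqrt p$ over this time range, which holds because on $t \asymp a^{\alpha_1}$ each $X^{(j)}(t)$ is concentrated at scale $t^{a_j}$ or finer and $a^{\alpha_1} = a^{1/a_1} \ll a$ (as $a_1 < a_j$ would be false — rather $a \leq 1$ and $a_j \geq a_1$ so $a^{1/a_j}\le a^{1/a_1}$, giving the needed concentration) — yields $\Exp[T(a,s)] \geq K_4 a^{\alpha_1}$.

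For part (ii), now $d_1 = 1$, $\alpha_1 > 1$, and $X^{(1)}$ is a one-dimensional $\alpha_1$-semistable process with $\alpha_1 \in (1,2)$, whose density at $0$ behaves like $t^{-1/\alpha_1}$, but now $1/\alpha_1 < 1$ so the integral $\int_0^s t^{-1/\alpha_1}\,dt$ converges at $0$ and the leading behavior of $\int_0^s \Prob(\|X^{(1)}(t)\| \leq a)\,dt$ alone would be of order $a^{\alpha_1}$, which is \emph{smaller} than the claimed $a^\rho$ with $\rho = 1 + \alpha_2(1 - 1/\alpha_1) < \alpha_1$. So the dominant contribution must come from balancing $X^{(1)}$ against $X^{(2)}$: the constraint $\|X(t)\| \leq a$ needs \emph{both} $|X^{(1)}(t)| \lesssim a$ and $\|X^{(2)}(t)\| \lesssim a$, and since $X^{(2)}$ has the smaller index $\alpha_2 < \alpha_1$ (slower decay, larger excursions) it is the harder constraint to satisfy for large $t$. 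Using independence, $\Prob(\|X(t)\| \leq a) \asymp \Prob(|X^{(1)}(t)| \leq a)\,\Prob(\|X^{(2)}(t)\| \leq a) \asymp \min\{1,a t^{-1/\alpha_1}\}\cdot\min\{1,(a t^{-1/\alpha_2})^{d_2}\}$ (with $E_2$ diagonal so the $X^{(2)}$ estimate is the same product-of-coordinates estimate; here one uses $\alpha_2 < d_2$ or handles $d_2=1$ separately). Integrating this over $t \in [0,s]$, one finds the integral is governed by the region $a^{\alpha_1} \lesssim t \lesssim a^{\alpha_2}$ where the integrand is $\asymp a t^{-1/\alpha_1} \cdot (a t^{-1/\alpha_2})^{d_2}$, and a direct computation (the exponent of $t$ being $-1/\alpha_1 - d_2/\alpha_2$, which is less than $-1$) shows the integral is of order $a^{1 + d_2} \cdot (a^{\alpha_1})^{1 - 1/\alpha_1 - d_2/\alpha_2}$... hmm, one must be careful about which endpoint dominates; if $d_2/\alpha_2 + 1/\alpha_1 > 1$ the integral $\int_{a^{\alpha_1}}^{a^{\alpha_2}} t^{-1/\alpha_1 - d_2/\alpha_2}\,dt$ is dominated by the lower endpoint $a^{\alpha_1}$, giving $a^{1+d_2}(a^{\alpha_1})^{1-1/\alpha_1-d_2/\alpha_2} = a^{1+d_2+\alpha_1-1-d_2\alpha_1/\alpha_2} = a^{\alpha_1 + d_2(1-\alpha_1/\alpha_2)}$ — but $\alpha_1/\alpha_2 > 1$ makes this smaller than $a^{\alpha_1}$, contradiction; so in fact the correct regime and the correct reading of the exponent must make the \emph{upper} endpoint $t \asymp a^{\alpha_2}$ dominate, which requires reconsidering the integrand: for $t$ between $a^{\alpha_1}$ and $a^{\alpha_2}$ we have $a t^{-1/\alpha_1} \le 1$ but $(at^{-1/\alpha_2})^{d_2}$ could be $\geq 1$ when $t \le a^{\alpha_2}$, so actually on that range $\Prob(\|X^{(2)}(t)\|\le a) \asymp 1$ and the integrand is just $a t^{-1/\alpha_1}$, whose integral over $[a^{\alpha_1},a^{\alpha_2}]$ is $\asymp a \cdot (a^{\alpha_2})^{1-1/\alpha_1} = a^{1 + \alpha_2(1-1/\alpha_1)} = a^\rho$, matching the claim. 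So the clean plan for (ii) is: bound $\Prob(\|X(t)\|\le a)$ above and below by $\Prob(|X^{(1)}(t)|\le a)$ times $\Prob(\|X^{(2)}(t)\|\le a)$ (the latter for the lower bound only, via independence and $\sqrt p$-splitting), then use $\Prob(|X^{(1)}(t)|\le a) \asymp \min\{1,at^{-1/\alpha_1}\}$ and $\Prob(\|X^{(2)}(t)\|\le a) \asymp 1$ for $t \lesssim a^{\alpha_2}$ together with $\Prob(\|X^{(2)}(t)\|\le a)$ negligible for $t \gg a^{\alpha_2}$, and integrate; the three ranges $t < a^{\alpha_1}$, $a^{\alpha_1} < t < a^{\alpha_2}$, $t > a^{\alpha_2}$ contribute $a^{\alpha_1}$, $a^\rho$, and lower order, and since $\rho < \alpha_1$ the middle term dominates.

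\medskip

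The main obstacle I expect is the \textbf{uniform control of the densities of the components near the origin}: for the semistable (not stable) process $X^{(1)}$ one only has scaling along the geometric sequence $c^n$, so one must verify that $p_{X^{(1)}(m)}(0)$ is bounded above and below uniformly for $m \in [1,c)$, and similarly that $\Prob(\|X^{(j)}(m)\| \le b)$ has the right order $\asymp \min\{1,b^{d_j}\}$ uniformly in $m \in [1,c)$ — this uses that $X^{(j)}(m)$ has a continuous bounded density (no Gaussian part, $\alpha_j < 2$) that is strictly positive at $0$, plus the semistable scaling to extend off $[1,c)$; establishing the lower bound $p_{X^{(j)}(m)}(0) > 0$ uniformly requires a short argument (e.g. via the Lévy–Khintchine exponent and the fact that the distribution is full), and for type considerations one must be a little careful when the density vanishes at the origin — but since here we only need balls $B(0,a)$ and integrate, a lower bound on the density on a full neighborhood of $0$ (not necessarily at $0$ itself) suffices. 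The second, more bookkeeping-type obstacle is getting the exponents in case (ii) exactly right, i.e.\ correctly identifying which of the three time-regimes dominates; as the scratch computation above shows, the answer hinges on reading off that $\Prob(\|X^{(2)}(t)\| \le a) \asymp 1$ throughout $a^{\alpha_1} \lesssim t \lesssim a^{\alpha_2}$, so that the middle regime contributes $\int_{a^{\alpha_1}}^{a^{\alpha_2}} a t^{-1/\alpha_1}\,dt \asymp a^{1+\alpha_2(1-1/\alpha_1)} = a^\rho$, and that this genuinely dominates the endpoint contributions because $\rho < \alpha_1$ and $\rho < 1 < \alpha_1$ respectively — this needs the hypothesis $\alpha_2 < \alpha_1$ and $\alpha_1 > d_1 = 1$ in an essential way.
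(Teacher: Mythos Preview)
Your plan is essentially the approach underlying Theorem 2.6 of \cite{KW}, to which the paper simply defers (noting only that diagonality of $E_j$ replaces the approximate bound $\|t^{E_j}\|\le K t^{1/\alpha_j\pm\delta}$ by the exact identity $\|t^{E_j}\|=t^{1/\alpha_j}$). So in spirit you and the paper agree: split $\Exp[T(a,s)]=\int_0^s\Prob(\|X(t)\|\le a)\,dt$, reduce via the discrete scaling to $m\in[1,c)$, use boundedness of the density of $X^{(1)}(m)$ for the upper estimate $\Prob(\|X^{(1)}(t)\|\le a)\le K\min\{1,(at^{-1/\alpha_1})^{d_1}\}$, and integrate in the three time regimes.

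There is, however, one genuine slip you should fix before writing this out: the spectral components $X^{(1)},\ldots,X^{(p)}$ are \emph{not} independent in general. The decomposition in Section~2.2 is purely linear-algebraic (projecting onto the $E$-invariant subspaces $V_j$); nothing forces the L\'evy measure of $X$ to split as a product, so your line ``uses independence of the components'' for the lower bound does not go through as stated. The repair is easy and you essentially have it already: for the lower bound in (i), restrict to $t\le\varepsilon a^{\alpha_1}$ and use the union bound
\[
\Prob(\|X(t)\|>a)\le\sum_{j=1}^p\Prob\bigl(\|X^{(j)}(t)\|>a/\sqrt{p}\bigr).
\]
For $j=1$ diagonality gives $\|X^{(1)}(t)\|\eqd (c^r)^{1/\alpha_1}\|X^{(1)}(m)\|$ with $c^r\le\varepsilon a^{\alpha_1}$, so the summand is $\Prob(\|X^{(1)}(m)\|>C\varepsilon^{-1/\alpha_1})$, small for $\varepsilon$ small uniformly in $m\in[1,c)$ by tightness. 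For $j\ge2$, $\|c^{rE_j}\|\le K(c^r)^{a_j-\delta}$ and since $\alpha_1 a_j>1$ the corresponding threshold tends to $\infty$ uniformly for $a\le1$. This gives $\Prob(\|X(t)\|\le a)\ge\tfrac12$ on $[0,\varepsilon a^{\alpha_1}]$ for suitable $\varepsilon$, hence $\Exp[T(a,s)]\ge\tfrac12\varepsilon a^{\alpha_1}$ whenever $s\ge a^{\alpha_1}\ge\varepsilon a^{\alpha_1}$. The same device handles the lower bound in (ii) on the window $t\le\varepsilon a^{\alpha_2}$, and also cleanly avoids the type-A/type-B density-at-zero issue you flagged. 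In (ii) your identification of the dominant regime $a^{\alpha_1}\lesssim t\lesssim a^{\alpha_2}$ with integrand $\asymp at^{-1/\alpha_1}$ is correct; just note that for the upper bound over $t>a^{\alpha_2}$ you may need to iterate the argument through $X^{(2)}$ (and possibly further components) rather than stop at a single product estimate, exactly as in \cite{KW}.
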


\begin{proof}
	The assertions can be proven by only slightly varying the proof of Theorem 2.6 in \cite{KW} and using the fact that for $E_j=\alpha_j^{-1}\cdot I^{d_j}$, where $I^{d_j}\in\rr^{d_j\times d_j}$ denotes the identity operator on $V_j$, we have $\|t^{E_j}\|=t^{1/\alpha_j}$ for all $t\geq0$.
\end{proof}

\subsection{Tauberian results for R-O variation}

Throughout this section let $X$ be a $(c^{1/\alpha},c)$-semistable subordinator for some $0<\alpha<1$, i.e.\ a semistable L\'evy process on $\rr$ with almost surely increasing sample paths. By Proposition 14.5 and Theorem 14.7(i) in \cite{Sato} it follows that $X$ has no drift part and $\int_{\{|x|\leq1\}}|x|\,\nu(dx)<\infty$ for the L\'evy measure $\nu$. Hence by Theorem 2.1 in \cite{Bro} the support of the L\'evy measure necessarily is contained in $\rr_+$. By Corollary 7.4.4 in \cite{MS} we have
\begin{equation}\label{Lmtail}
\nu(x,\infty)=x^{-\alpha}\theta(\log x)\quad\text{ for all }x>0,
\end{equation}
where $\theta$ is a strictly positive and $\log c$-periodic function defined on $\rr$. Moreover, using (7.77) in \cite{MS} we easily get
\begin{equation}\label{Lmperb}
c^{-\alpha}\theta(0)\leq\theta(y)\leq c^{\alpha}\theta(0)\quad\text{ for all }y\in\rr.
\end{equation}
Our aim is to prove the following consequence of a variant of de Bruijn's Tauberian theorem; see Theorem 4.12.9 in \cite{BGT}.
\begin{theorem}\label{deBruijn}
There exists a constant $\tilde K_5>0$ such that 
$$\Prob\left(X(r)\leq x\right)\leq\exp\left(-\tilde K_5 x^{-\frac{\alpha}{1-\alpha}}\right)\quad\text{ for all $r\in[1,c]$ and $x>0$.}$$
\end{theorem}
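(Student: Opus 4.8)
The plan is to estimate the Laplace transform of $X(r)$ and then invoke a Tauberian argument. Since $X$ is a semistable subordinator with no drift and L\'evy measure supported on $\rr_+$ with tail \eqref{Lmtail}, its Laplace exponent $\Psi(\lambda)$ defined by $\Exp[e^{-\lambda X(1)}]=e^{-\Psi(\lambda)}$ satisfies $\Psi(\lambda)=\int_0^\infty(1-e^{-\lambda x})\,\nu(dx)$. First I would show that $\Psi$ is R-O varying (regularly-oscillating, in the sense of Bingham--Goldie--Teugels) of index $\alpha$; more precisely, using \eqref{Lmtail} together with the $\log c$-periodicity of $\theta$ and the two-sided bound \eqref{Lmperb}, an integration by parts gives $\Psi(\lambda)=\alpha\int_0^\infty e^{-\lambda x}x^{-\alpha}\theta(\log x)\,dx$ (up to handling the boundary terms), and then a substitution $x\mapsto x/\lambda$ yields constants $0<c_1\leq c_2<\infty$ with
\begin{equation}\label{Psibound}
c_1\,\lambda^{\alpha}\leq\Psi(\lambda)\leq c_2\,\lambda^{\alpha}\qquad\text{for all }\lambda>0.
\end{equation}
This is the semistable analogue of exact regular variation; the periodicity of $\theta$ is exactly what keeps the oscillation bounded between two powers.

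Next, for fixed $r\in[1,c]$ and any $\lambda>0$, Markov's inequality applied to $e^{-\lambda X(r)}$ gives
\begin{equation}\label{Chernoff}
\Prob(X(r)\leq x)=\Prob\bigl(e^{-\lambda X(r)}\geq e^{-\lambda x}\bigr)\leq e^{\lambda x}\,\Exp\bigl[e^{-\lambda X(r)}\bigr]=\exp\bigl(\lambda x-r\,\Psi(\lambda)\bigr)\leq\exp\bigl(\lambda x-c_1\,\lambda^{\alpha}\bigr),
\end{equation}
using \eqref{Psibound} and $r\geq1$. Now optimize the exponent $\lambda x-c_1\lambda^{\alpha}$ over $\lambda>0$: the minimizer is $\lambda_\ast=(c_1\alpha/x)^{1/(1-\alpha)}$, and substituting back produces an upper bound of the form $\exp(-\tilde K_5\,x^{-\alpha/(1-\alpha)})$ with an explicit constant $\tilde K_5=\tilde K_5(\alpha,c_1)>0$ depending only on $\alpha$ and $c_1$ (hence, through \eqref{Lmperb}, only on $\alpha$, $c$ and $\theta(0)$). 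Since this bound is uniform in $r\in[1,c]$ and valid for every $x>0$, the theorem follows. This is precisely the Chernoff-type half of de Bruijn's Tauberian theorem (Theorem 4.12.9 in \cite{BGT}), which in the one direction we need is entirely elementary and does not require the full Tauberian machinery.

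The main obstacle is establishing \eqref{Psibound} cleanly, i.e.\ controlling $\Psi(\lambda)$ by two-sided power bounds. The subtlety is that $\theta$ is only $\log c$-periodic, not constant, so $\Psi$ genuinely oscillates; one must check that the integral $\int_0^\infty e^{-\lambda x}x^{-\alpha}\theta(\log x)\,dx$ converges at both endpoints (near $0$ this uses $\alpha<1$, at $\infty$ the exponential decay) and then pull the bounded factor $\theta$ out using \eqref{Lmperb} to compare with $\int_0^\infty e^{-\lambda x}x^{-\alpha}\,dx=\Gamma(1-\alpha)\lambda^{\alpha-1}$. Once \eqref{Psibound} is in hand, the remaining steps---Chernoff bound and one-variable optimization---are routine. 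One should also double-check the integration-by-parts step that rewrites $\Psi$ in terms of the tail $\nu(x,\infty)$, since $x^{-\alpha}\theta(\log x)\to\infty$ as $x\downarrow0$; writing $\Psi(\lambda)=\lambda\int_0^\infty e^{-\lambda x}\,\nu(x,\infty)\,dx$ and using $\int_{\{|x|\le1\}}x\,\nu(dx)<\infty$ handles this without difficulty.
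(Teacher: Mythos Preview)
Your proposal is correct and follows essentially the same strategy as the paper: first establish two-sided power bounds $c_1\lambda^\alpha\leq\Psi(\lambda)\leq c_2\lambda^\alpha$ on the Laplace exponent (this is the paper's Lemma~\ref{mubar}, reached there via an auxiliary measure $\rho$ and Lemma~\ref{LTrho}, whereas you reach it more directly from the tail formula $\Psi(\lambda)=\lambda\int_0^\infty e^{-\lambda x}\nu(x,\infty)\,dx$ and the bounds~\eqref{Lmperb}), then apply the Chernoff bound $\Prob(X(r)\leq x)\leq e^{\lambda x-r\Psi(\lambda)}$ and optimize in $\lambda$. One minor slip: your integration-by-parts identity should carry a factor $\lambda$, not $\alpha$, in front of the integral (as you in fact write later), but this does not affect the argument.
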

Note that if $X$ is a stable subordinator then the function $\theta$ is constant and \eqref{Lmtail} shows that the L\'evy measure has a regularly varying tail. In this case it is well known that Theorem \ref{deBruijn} can be deduced from Tauberian results as in Theorem 4 of \cite{BT}; see also Theorem 8.2.2 in \cite{BGT}. Theorem \ref{deBruijn} can also be derived using asymptotic tail formulas for stable densities in \cite{Sko}; for a proof in case of a stable subordinator we refer to Theorem 2.5.2 in \cite{Zol}. In our more general semistable setup the precise tail asymptotic around zero is not available and due to \eqref{Lmtail} the tail of the L\'evy measure is not necessarily regularly varying. Hence the classical results for stable subordinators cannot be applied. Nevertheless, due to \eqref{Lmtail} and \eqref{Lmperb} the variation of the tail is of regular order which is called R-O variation in \cite{MS}. We will now show that there are corresponding Tauberian results leading to a proof of Theorem \ref{deBruijn}.

Let $\rho$ be the Borel measure on $\rr_+$ given by $\rho(0,x)=\int_0^xy\,\nu(dy)$ for $x>0$ then using \eqref{Lmtail} and the periodicity of $\theta$ we get 
\begin{align*}
\rho(0,x] & =\sum_{n=0}^{\infty}\int_{c^{-(n+1)}x}^{c^{-n}x}y\,\nu(dy)\leq\sum_{n=0}^{\infty}c^{-n}x\,\nu(c^{-(n+1)}x,c^{-n}x]\\
& =\sum_{n=0}^{\infty}c^{-n}x\left[(c^{-(n+1)}x)^{-\alpha}\theta\big(\log(c^{-(n+1)}x)\big)-(c^{-n}x)^{-\alpha}\theta\big(\log(c^{-n}x)\big)\right]\\
& =x^{1-\alpha}\theta(\log x)\sum_{n=0}^{\infty}c^{-n}(c^{(n+1)\alpha}-c^{n\alpha})=x^{1-\alpha}\theta(\log x)\frac{c^\alpha-1}{1-c^{\alpha-1}}.
\end{align*}
Similarly, we get the lower bound $\rho(0,x]\geq x^{1-\alpha}\theta(\log x)c^{-1}\frac{c^\alpha-1}{1-c^{\alpha-1}}$ for all $x>0$. Together this yields
\begin{equation}\label{rhobounds}
\frac{c^\alpha-1}{c-c^{\alpha}}\leq\frac{\rho(0,x]}{x^{1-\alpha}\theta(\log x)}\leq\frac{c^\alpha-1}{1-c^{\alpha-1}}\quad\text{ for all }x>0.
\end{equation}
For $s>0$ let $\bar\rho(s)=\int_0^\infty e^{-sx}\,\rho(dx)$ be the Laplace transform of the measure $\rho$.
\begin{lemma}\label{LTrho}
There exist constants $0<\tilde K_2<\tilde K_1$ such that
$$\tilde K_2 s^{1-\alpha}\theta(\log s)\leq\bar\rho(1/s)\leq\tilde K_1 s^{1-\alpha}\theta(\log s)\quad\text{ for all }s>0.$$
\end{lemma}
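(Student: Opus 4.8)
The plan is to transfer the two-sided tail estimate \eqref{rhobounds} for $\rho(0,x]$ to the Laplace transform $\bar\rho(1/s)$ via the layer-cake formula followed by the scaling substitution $x=sv$, and then to absorb the oscillating factor $\theta$ using the periodicity bound \eqref{Lmperb}. The numerical constants in \eqref{rhobounds} and \eqref{Lmperb} will be carried along explicitly, so that the required inequality $0<\tilde K_2<\tilde K_1$ drops out at the end.

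First I would note that $\rho(\{0\})=0$ and, by \eqref{rhobounds} together with the boundedness of $\theta$ from \eqref{Lmperb}, $x\mapsto\rho(0,x]$ is finite for every $x>0$ and grows at most like a constant multiple of $x^{1-\alpha}$. Hence Tonelli's theorem applied to $e^{-x/s}=\tfrac1s\int_x^\infty e^{-u/s}\,du$ is legitimate and gives
$$\bar\rho(1/s)=\int_0^\infty e^{-x/s}\,\rho(dx)=\frac1s\int_0^\infty e^{-u/s}\,\rho(0,u]\,du=\int_0^\infty e^{-v}\,\rho(0,sv]\,dv,$$
the last equality being the substitution $u=sv$. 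Inserting \eqref{rhobounds} at $x=sv$, using $\log(sv)=\log s+\log v$, and writing $I(s):=\int_0^\infty e^{-v}v^{1-\alpha}\theta(\log s+\log v)\,dv$, we obtain
$$\frac{c^\alpha-1}{c-c^\alpha}\,s^{1-\alpha}\,I(s)\ \le\ \bar\rho(1/s)\ \le\ \frac{c^\alpha-1}{1-c^{\alpha-1}}\,s^{1-\alpha}\,I(s).$$

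It remains to show that $I(s)$ is comparable to $\theta(\log s)$ uniformly in $s>0$, which is where \eqref{Lmperb} is used. Since $\theta$ is $\log c$-periodic, \eqref{Lmperb} holds with an arbitrary real argument in place of $y$, so $c^{-\alpha}\theta(0)\le\theta(\log s+\log v)$ while $\theta(\log s)\le c^{\alpha}\theta(0)$, hence $\theta(\log s+\log v)\ge c^{-2\alpha}\theta(\log s)$; symmetrically $\theta(\log s+\log v)\le c^{2\alpha}\theta(\log s)$, for all $s,v>0$. Plugging these bounds into $I(s)$ and using $\int_0^\infty e^{-v}v^{1-\alpha}\,dv=\Gamma(2-\alpha)<\infty$ (finite since $0<\alpha<1$), we get $c^{-2\alpha}\Gamma(2-\alpha)\,\theta(\log s)\le I(s)\le c^{2\alpha}\Gamma(2-\alpha)\,\theta(\log s)$. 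Combining with the previous display yields the lemma with
$$\tilde K_2=c^{-2\alpha}\Gamma(2-\alpha)\,\frac{c^\alpha-1}{c-c^\alpha},\qquad \tilde K_1=c^{2\alpha}\Gamma(2-\alpha)\,\frac{c^\alpha-1}{1-c^{\alpha-1}},$$
and $\tilde K_2<\tilde K_1$ follows from $c-c^\alpha=c(1-c^{\alpha-1})>1-c^{\alpha-1}$ and $c^{-2\alpha}<c^{2\alpha}$. I do not expect a genuine obstacle here; the only point requiring care is to keep the oscillating factor $\theta(\log s)$ rather than crudely replacing $\theta$ by constants, since the precise R-O behaviour is what is needed downstream for the de Bruijn-type argument leading to Theorem~\ref{deBruijn}, and the uniform comparison $c^{-2\alpha}\theta(\log s)\le\theta(\log s+\log v)\le c^{2\alpha}\theta(\log s)$ is exactly what makes this possible.
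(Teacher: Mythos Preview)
Your proof is correct and takes a cleaner route than the paper's. The paper decomposes the domain of integration $c$-adically: it writes $\bar\rho(1/s)=\int_0^s e^{-x/s}\,\rho(dx)+\sum_{n\geq0}\int_{c^ns}^{c^{n+1}s}e^{-x/s}\,\rho(dx)$, bounds the tail blocks by $\exp(-c^n)\,c^{n+1}s\,\nu(c^ns,c^{n+1}s]$, inserts the explicit tail formula \eqref{Lmtail} and the periodicity of $\theta$, and sums the resulting geometric series, combining with \eqref{rhobounds} only for the initial piece $\rho(0,s]$. You instead pass to the distribution function via the layer-cake identity $\bar\rho(1/s)=\int_0^\infty e^{-v}\rho(0,sv]\,dv$ and then apply \eqref{rhobounds} pointwise under the integral, which avoids re-deriving the $c$-adic estimates and reduces everything to the single constant $\Gamma(2-\alpha)$. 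Both arguments hinge on the same two inputs, \eqref{rhobounds} and \eqref{Lmperb}, so neither is more general; your version is shorter and yields explicit constants, while the paper's decomposition makes the R-O structure of the L\'evy measure more visible. One minor remark: your appeal to Tonelli needs only nonnegativity, so the preliminary finiteness observation is not strictly required for the swap itself, though it does confirm that the resulting expression is finite and hence that $\bar\rho(1/s)$ exists.
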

\begin{remark}
In particular, Lemma \ref{LTrho} shows that $\bar\rho(s)$ exists for all $s>0$ and that $\rho$ is an unbounded measure, since $\bar\rho(s)\to\infty$ as $s\downarrow0$.
\end{remark}
\begin{proof}
Using \eqref{Lmtail}, the periodicity of $\theta$ and \eqref{rhobounds}, for any $s>0$ we get 
\begin{align*}
\bar\rho(1/s) & =\int_0^se^{-x/s}\,\rho(dx)+\sum_{n=0}^\infty\int_{c^ns}^{c^{n+1}s}e^{-x/s}\,\rho(dx)\\
& \leq\rho(0,s]+\sum_{n=0}^\infty\exp(-c^n)\,\rho(c^ns,c^{n+1}s]=\rho(0,s]+\sum_{n=0}^\infty\exp(-c^n)\int_{c^ns}^{c^{n+1}s}y\,\nu(dy)\\
& \leq\rho(0,s]+\sum_{n=0}^\infty\exp(-c^n)c^{n+1}s\,\nu(c^ns,c^{n+1}s]\\
& =\rho(0,s]+\sum_{n=0}^\infty\exp(-c^n)c^{n+1}s\left[(c^ns)^{-\alpha}\theta(\log(c^ns))-(c^{n+1}s)^{-\alpha}\theta(\log(c^{n+1}s))\right]\\
& =\rho(0,s]+s^{1-\alpha}\theta(\log s)c(1-c^{-\alpha})\sum_{n=0}^\infty\exp(-c^n)c^{n(1-\alpha)}\leq\tilde K_1 s^{1-\alpha}\theta(\log s)
\end{align*}
and the lower bound follows analogously.
\end{proof}
Denote by $\mu_r$ the infinitely divisible distribution of $X(r)$ with L\'evy measure $r\cdot\nu$. Since $\int_0^1x\,\nu(dx)<\infty$ and $X$ has no drift part, for any $s>0$ and $r>0$ we can write the Laplace transform of $\mu_r$ as
\begin{equation}\label{LTmu}
\bar\mu_r(s)=\exp\left(-r\int_0^\infty(1-e^{-sx})\,\nu(dx)\right)=\exp(-r\psi(s)),
\end{equation}
where
$$\psi(s)=\int_0^\infty(1-e^{-sx})\,\nu(dx)=\int_0^\infty\frac{1-e^{-sx}}{x}\,\rho(dx).$$
By dominated convergence we further get
\begin{equation}\label{psider}\begin{split}
\psi'(s) & =\lim_{h\to0}\frac{\psi(s+h)-\psi(s)}{h}=\lim_{h\to0}\int_0^\infty\frac{e^{-xs}-e^{-x(s+h)}}{xh}\,\rho(dx)\\
&=\lim_{h\to0}\int_0^\infty e^{-xs}\,\frac{1-e^{-xh}}{xh}\,\rho(dx)=\bar\rho(s).
\end{split}\end{equation}
\begin{lemma}\label{mubar}
There exist constants $0<\tilde K_4<\tilde K_3$ such that
$$\tilde K_4 s^{\alpha}\leq -\frac1r\,\log\bar\mu_r(s)\leq\tilde K_3 s^{\alpha}\quad\text{ for all $s>0$ and $r>0$.}$$
\end{lemma}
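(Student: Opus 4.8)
The plan is to notice first that the claimed quantity does not in fact depend on $r$: by \eqref{LTmu} we have $-\frac1r\log\bar\mu_r(s)=\psi(s)$, so it suffices to prove $\tilde K_4 s^\alpha\le\psi(s)\le\tilde K_3 s^\alpha$ for all $s>0$. Since $\psi(s)=\int_0^\infty(1-e^{-sx})\,\nu(dx)$ with $\int_{(0,1]}x\,\nu(dx)<\infty$ and $\nu(1,\infty)<\infty$, dominated convergence gives $\psi(0+)=0$, and by \eqref{psider} the function $\psi$ is continuously differentiable on $(0,\infty)$ with $\psi'=\bar\rho$. Hence $\psi(s)=\int_0^s\bar\rho(u)\,du$.

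Next I would turn the bounds of Lemma \ref{LTrho} into clean two-sided power bounds on $\bar\rho$ itself. Replacing $s$ by $1/u$ in Lemma \ref{LTrho} yields
$$\tilde K_2\, u^{\alpha-1}\,\theta(-\log u)\le\bar\rho(u)\le\tilde K_1\, u^{\alpha-1}\,\theta(-\log u)\qquad\text{for all }u>0,$$
and then \eqref{Lmperb} applied with $y=-\log u$ gives, with $c_1:=\tilde K_2\, c^{-\alpha}\theta(0)$ and $c_2:=\tilde K_1\, c^{\alpha}\theta(0)$,
$$c_1\, u^{\alpha-1}\le\bar\rho(u)\le c_2\, u^{\alpha-1}\qquad\text{for all }u>0.$$
Because $0<\alpha<1$, the map $u\mapsto u^{\alpha-1}$ is integrable at the origin, so these bounds are consistent with the representation $\psi(s)=\int_0^s\bar\rho(u)\,du$ obtained in the first step.

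Finally I would integrate: for every $s>0$,
$$\psi(s)=\int_0^s\bar\rho(u)\,du\le c_2\int_0^s u^{\alpha-1}\,du=\frac{c_2}{\alpha}\,s^\alpha,$$
and symmetrically $\psi(s)\ge\frac{c_1}{\alpha}\,s^\alpha$. This gives the statement with $\tilde K_3:=c_2/\alpha$ and $\tilde K_4:=c_1/\alpha$, and $0<\tilde K_4<\tilde K_3$ holds since $0<\tilde K_2<\tilde K_1$ and $\theta(0)>0$.

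There is no real obstacle here: the result is essentially an immediate consequence of Lemma \ref{LTrho} combined with the identity $\psi'=\bar\rho$ from \eqref{psider}. The only points needing a little care are the interchange of limit and integral used to see $\psi(0+)=0$ (so that the fundamental theorem of calculus yields $\psi(s)=\int_0^s\psi'(u)\,du$ with no boundary term), and the elementary substitution together with the periodicity estimate \eqref{Lmperb} used to pass from the bound on $\bar\rho(1/s)$ to a genuine power bound on $\bar\rho$.
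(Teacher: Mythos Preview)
Your proof is correct and follows essentially the same route as the paper: both reduce to $\psi(s)=\int_0^s\bar\rho(t)\,dt$ via \eqref{LTmu} and \eqref{psider}, then apply Lemma~\ref{LTrho} together with the periodicity bound \eqref{Lmperb} to obtain two-sided power bounds on $\bar\rho$, and integrate. Your write-up is simply more explicit about justifying $\psi(0+)=0$ and about the substitution $s\mapsto 1/u$, but the argument is the same.
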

\begin{proof}
Using \eqref{LTmu}, \eqref{psider}, Lemma \ref{LTrho} and \eqref{Lmperb} we get
\begin{align*}
-\frac1r\,\log\bar\mu_r(s) & =\psi(s)=\int_0^s\psi'(t)\,dt=\int_0^s\bar\rho(t)\,dt\leq\tilde K_1\int_0^st^{\alpha-1}\theta(\log(1/t))\,dt\\
&\leq \tilde K_1 c^\alpha\theta(0)\int_0^st^{\alpha-1}\,dt=\frac{\tilde K_1 c^\alpha\theta(0)}{\alpha}\,s^{\alpha}=\tilde K_3 s^{\alpha}
\end{align*}
and the lower bound follows analogously.
\end{proof}
\begin{proof}[Proof of Theorem \ref{deBruijn}]
Let $t>0$ be arbitrary but fixed. Then for any $s>0$ we have
\begin{align*}
\bar\mu_r(s^{-1/\alpha}) & =\int_0^\infty\exp\left(-xs^{-1/\alpha}\right)\,\mu_r(dx)\\
& \geq\int_0^{(s/t)^{(1-\alpha)/\alpha}}\exp\left(-\frac{xs^{(\alpha-1)/\alpha}}{s}\right)\,\mu_r(dx)\\
&\geq\exp\left(-\frac1s\,t^{\frac{\alpha-1}{\alpha}}\right)\,\mu_r[0,(s/t)^{(1-\alpha)/\alpha}].
\end{align*}
Write $x=(s/t)^{(1-\alpha)/\alpha}$ then $\frac1s=\frac1t\,x^{-\alpha/(1-\alpha)}$ and together with Lemma \ref{mubar} we get
\begin{align*}
\Prob\left(X(r)\leq x\right) & =\mu_r[0,x]\leq\exp\left(\frac1s\,t^{\frac{\alpha-1}{\alpha}}\right)\bar\mu_r(s^{-1/\alpha})\\
& \leq\exp\left(\frac1s\,t^{\frac{\alpha-1}{\alpha}}\right)\exp\left(-r\tilde K_4(s^{-1/\alpha})^{\alpha}\right)\\
& =\exp\left(-\frac1s\left(r\tilde K_4-t^{\frac{\alpha-1}{\alpha}}\right)\right)\\
& =\exp\left(-x^{-\frac{\alpha}{1-\alpha}}\left(r\tilde K_4\frac1t-\frac1{t^\alpha}\right)\right).
\end{align*}
Now choose $t>0$ small enough such that $r\tilde K_4\frac1t-\frac1{t^\alpha}=\tilde K_5>0$ for all $r\in[1,c]$ then the assertion follows.
\end{proof}

% % % % % Main Result % % % % % %

\section{Main Result}

Let $\alpha_1$ and $d_1$ be as defined in Section \ref{subsecSpecDec} by means of the spectral decomposition. As in \cite{HY} we were only able to fully solve the question of exact Hausdorff measures for the range of operator semistable L\'evy processes in the case $\alpha_1<d_1$ but also give partial results for the case $\alpha_1>d_1$. We will consider operator semistable L\'evy processes of type $A$ and type $B$, simultaneously. If $\alpha_1<d_1$ and $X$ is of type $B$ we will need the following assumption on the tail asymptotic of sojourn times.

\begin{assumption}\label{assumptionB1}
	Let $X$ be a $(c^E,c)$-operator semistable L\'evy process of type $B$ on $\rd$ with diagonal  principal exponent $E_1$ and $0<\alpha_1<1$. We suppose that there exist constants $K_{8}, \lambda_0>0$ such that for all $\lambda\geq\lambda_0$ and $a>0$
	\begin{align*}
	\Prob\left(T(a,1)> \lambda a^{\alpha_1}\right)\leq \exp\left(-K_{8}\lambda^{\frac1{1-\alpha_1}}\right).
	\end{align*}
\end{assumption}

Note that if $X$ is an operator stable L\'evy process of type $B$ with $\alpha_1<d_1$ and diagonal exponent $E$, then the projection of $X^{(1)}$ onto any coordinate-axis is a stable subordinator and thus necessarily $\alpha_1<1$. In this case it is known that Assumption \ref{assumptionB1} holds true by Lemma 6 in \cite{T} or Lemma 5.2 in \cite{PT}. In our more general operator semistable case it is an open question whether we have the same tail asymptotics of the sojourn times. The following result provides a sufficient condition for Assumption \ref{assumptionB1} to hold true.
\begin{prop}\label{Taubersatz}
(i) Let $X$ be a $(c^E,c)$-operator semistable L\'evy process on $\rd$ with diagonal  principal exponent $E_1$ and $0<\alpha_1<1$. Suppose the existence of a vector $u\in V_1\setminus\{0\}$ such that the support of the L\'evy measure $\nu$ of $X$ is contained in the halfspace $\mathbb H:=\{x\in\rd:\langle x,u\rangle\geq0\}$. Then $X$ is of type $B$ and Assumption \ref{assumptionB1} holds.
\end{prop}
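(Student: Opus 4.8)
The plan is to reduce the proposition to Theorem \ref{deBruijn}, applied to the one-dimensional semistable subordinator obtained by projecting $X$ in the direction $u$. I would set $Y(t):=\langle X(t),u\rangle$. Since $u\in V_1$ and the subspaces $V_1,\dots,V_p$ are mutually orthogonal for the chosen inner product, $Y(t)=\langle X^{(1)}(t),u\rangle$, so $Y$ is a one-dimensional L\'evy process; as $X^{(1)}$ is $(c^{E_1},c)$-operator semistable with $E_1=\alpha_1^{-1}I^{d_1}$, we have $X^{(1)}(ct)\eqd c^{1/\alpha_1}X^{(1)}(t)$ for every $t$, hence $Y(ct)\eqd c^{1/\alpha_1}Y(t)$, i.e.\ $Y$ is $(c^{1/\alpha_1},c)$-semistable. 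Its L\'evy measure $\nu_Y$ is the image of $\nu$ under $x\mapsto\langle x,u\rangle$, hence concentrated on $\rr_+$ by the halfspace hypothesis, and $\nu_Y\ne0$ because otherwise $Y\equiv0$ and $X^{(1)}(1)$ would be supported on the hyperplane $\{x\in V_1:\langle x,u\rangle=0\}$, contradicting that $X$ is full. Arguing as at the beginning of Section 2.4, using $0<\alpha_1<1$, I would then check that $\nu_Y$ has no Gaussian part, that $\int_{(0,1]}x\,\nu_Y(dx)<\infty$ (by Corollary 7.4.4 in \cite{MS} and $\alpha_1<1$), and that the finite-variation drift $b_0$ vanishes --- matching the drift terms in $\psi_Y(c^{1/\alpha_1}\theta)=c\,\psi_Y(\theta)$, which is legitimate since the L\'evy measure scales exactly as $c^{1/\alpha_1}\nu_Y=c\,\nu_Y$, gives $b_0(c^{1/\alpha_1}-c)=0$ and $c^{1/\alpha_1}\ne c$ for $c>1,\alpha_1<1$. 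Thus $Y$ is a $(c^{1/\alpha_1},c)$-semistable subordinator, Theorem \ref{deBruijn} applies to it, and $Y$ is non-decreasing with $Y\ge0$. In particular $\langle X^{(1)}(1),u\rangle=Y(1)\ge0$ a.s., so $X^{(1)}(1)$ --- and hence also $X(1)$ --- is supported on a halfspace having the origin on its boundary, whence the relevant continuous Lebesgue density vanishes at $0$; thus $X$ is of type $B$.

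Next I would pass from the sojourn time of $X$ to a first-passage time of $Y$. If $X(t)\in B(0,a)$ then, by Cauchy--Schwarz, $0\le Y(t)\le\|X(t)\|\,\|u\|\le a\|u\|$, so
\[
T(a,1)\le\int_0^1 1_{[0,a\|u\|]}(Y(t))\,dt\le\sigma:=\inf\{t\ge0:Y(t)>a\|u\|\},
\]
the second inequality holding because $Y$ is non-decreasing, so $\{t\ge0:Y(t)\le a\|u\|\}$ is an interval containing $0$ whose length is at most $\sigma$. Moreover $\{\sigma>t\}\subseteq\{Y(t)\le a\|u\|\}$ for every $t\ge0$ directly from the definition of $\sigma$, hence
\[
\Prob\bigl(T(a,1)>\lambda a^{\alpha_1}\bigr)\le\Prob\bigl(\sigma>\lambda a^{\alpha_1}\bigr)\le\Prob\bigl(Y(\lambda a^{\alpha_1})\le a\|u\|\bigr)\qquad\text{for all }\lambda>0.
\]

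Finally I would remove the restriction $r\in[1,c]$ in Theorem \ref{deBruijn} by semistable scaling. Given $\lambda,a>0$, pick $n\in\ganz$ with $c^n\le\lambda a^{\alpha_1}<c^{n+1}$ and put $r:=\lambda a^{\alpha_1}c^{-n}\in[1,c)$; then $Y(\lambda a^{\alpha_1})\eqd c^{n/\alpha_1}Y(r)$, so with $x:=c^{-n/\alpha_1}a\|u\|=r^{1/\alpha_1}\lambda^{-1/\alpha_1}\|u\|$, Theorem \ref{deBruijn} --- whose constant $\tilde K_5$ is uniform over $r\in[1,c]$ by its construction --- gives
\[
\Prob\bigl(Y(\lambda a^{\alpha_1})\le a\|u\|\bigr)=\Prob\bigl(Y(r)\le x\bigr)\le\exp\bigl(-\tilde K_5\,x^{-\alpha_1/(1-\alpha_1)}\bigr).
\]
Since $r\ge1$,
\[
x^{-\alpha_1/(1-\alpha_1)}=r^{-1/(1-\alpha_1)}\,\|u\|^{-\alpha_1/(1-\alpha_1)}\,\lambda^{1/(1-\alpha_1)}\ge c^{-1/(1-\alpha_1)}\,\|u\|^{-\alpha_1/(1-\alpha_1)}\,\lambda^{1/(1-\alpha_1)},
\]
so Assumption \ref{assumptionB1} follows with $K_{8}:=\tilde K_5\,c^{-1/(1-\alpha_1)}\|u\|^{-\alpha_1/(1-\alpha_1)}$ and any $\lambda_0>0$; in fact the bound holds for all $\lambda>0$ and all $a>0$.

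The hard part will be the first step: one has to be sure that $Y$ really is an increasing semistable subordinator, not merely a semistable process with jumps in $\rr_+$, since it is precisely the nonnegativity and the vanishing of the drift of $Y$ that simultaneously yield the type-$B$ property and permit replacing the occupation time by a first-passage time; and for the type-$B$ conclusion one also needs that the relevant law has a continuous density, so that being supported on a halfspace upgrades to vanishing at the origin. Once $Y$ is pinned down as a $(c^{1/\alpha_1},c)$-semistable subordinator, the remainder is Theorem \ref{deBruijn} together with the elementary scaling bookkeeping above.
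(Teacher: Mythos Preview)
Your proposal is correct and follows essentially the same route as the paper: project onto $u$ to obtain a one-dimensional $(c^{1/\alpha_1},c)$-semistable subordinator, deduce type $B$ from the halfspace support together with continuity of the density, bound the sojourn time by the subordinator's first-passage probability, and then invoke Theorem~\ref{deBruijn} after semistable rescaling. One tiny slip: in the final display the lower bound $r^{-1/(1-\alpha_1)}\ge c^{-1/(1-\alpha_1)}$ follows from $r<c$ (the exponent is negative), not from $r\ge1$.
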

\begin{proof}
Without loss of generality, we can assume that $\|u\|=1$. Since $X$ is strictly operator-semistable and $0<\alpha_1<1$, similar to Proposition 14.5 and Theorem 14.7(i) in \cite{Sato} it follows that $X$ has no drift part and $\int_{\{\|x\|\leq1\}}\|x\|\,\nu(dx)<\infty$. Hence by Theorem 2.1 in \cite{Bro} the support of $X(1)$ is equal to the closure of $\bigcup_{k\in\nat}\supp(\nu^{\ast k})$ which is contained in the halfspace $\mathbb H$ by assumption. Since $X(1)$ has a continuous Lebesgue density as shown in section 2.3 of \cite{KW}, it follows that $X$ is of type $B$.
Let $\tilde X=\{\tilde X(t)\}_{t\geq0}$ be the projection of $X$ onto the direction $u$, i.e.
$$\tilde X(t):=\langle X(t),u\rangle=\langle X^{(1)}(t),u\rangle\quad,\,t\geq0,.$$
Then clearly $\tilde X$ defines a L\'evy process on $\rr$ and
$$\tilde X(ct)=\langle X^{(1)}(ct),u\rangle=\langle c^{E_1}X^{(1)}(t),u\rangle=c^{1/\alpha_1}\langle X^{(1)}(t),u\rangle=c^{1/\alpha_1}\tilde X(t)$$
shows that $\tilde X$ is strictly $(c^{1/\alpha_1},c)$-semistable. Its L\'evy measure $\tilde\nu$ is given by projection of $\nu$ and thus $\supp(\tilde\nu)\subseteq\rr_+$ and $0\in\supp(\tilde\nu)$ by the semistable scaling property $\tilde\nu(c^{-m/\alpha_1}dx)=c^m\cdot\tilde\nu(dx)$ for all $m\in\ganz$. Hence the L\'evy process $\tilde X$ is concentrated on $\rr_+$ by Theorem 24.10 in \cite{Sato}. Since $0<\alpha_1<1$, Proposition 14.5 in \cite{Sato} shows that $\int_0^1x\,\tilde\nu(dx)<\infty$. Together with Theorem 14.7(i) in \cite{Sato} this shows that all the conditions of Theorem 21.5 in \cite{Sato} are fulfilled and we conclude that $\tilde X$ is a strictly $(c^{1/\alpha_1},c)$-semistable subordinator. 

Similar to the proof of Lemma 6 in \cite{T}, due to the almost surely increasing sample paths of $\tilde X$ we get
\begin{align*}
\Prob\left(T(a,1)> \lambda a^{\alpha_1}\right) & =\Prob\left(\int_0^11_{B(0,a)}(X(t))\,dt>\lambda a^{\alpha_1}\right)\\
& \leq\Prob\left(\int_0^11_{[0,a]}(\tilde X(t))\,dt\geq\lambda a^{\alpha_1}\right)=\Prob\left(\tilde X(\lambda a^{\alpha_1})\leq a\right).
\end{align*}
Write $a^{\alpha_1}=c^{m(a)}r(a)$ with $m(a)\in\ganz$, $r(a)\in[1,c)$ and $\lambda r(a)=c^{m(\lambda,a)}r(\lambda,a)$ with $m(\lambda,a)\in\nat$ for $\lambda\geq c$ and  $r(\lambda,a)\in[1,c)$. Then we get
\begin{align*}
\Prob\left(T(a,1)> \lambda a^{\alpha_1}\right) & \leq\Prob\left(\tilde X(c^{m(\lambda,a)}r(\lambda,a)c^{m(a)})\leq a\right)\\
& =\Prob\left(\tilde X(r(\lambda,a))\leq c^{-m(\lambda,a)/\alpha_1}c^{-m(a)/\alpha_1}a\right).
\end{align*}
Theorem \ref{deBruijn} implies that
\begin{align*}
\Prob\left(T(a,1)> \lambda a^{\alpha_1}\right) & \leq\exp\left(-\tilde K_5\left(c^{-m(\lambda,a)/\alpha_1}c^{-m(a)/\alpha_1}a\right)^{-\frac{\alpha_1}{1-\alpha_1}}\right)\\
& =\exp\left(-\tilde K_5\left(c^{-m(\lambda,a)}r(a)\right)^{-\frac{1}{1-\alpha_1}}\right)\\
&=\exp\left(-\tilde K_5\left(\frac{\lambda}{r(\lambda,a)}\right)^{\frac{1}{1-\alpha_1}}\right)\leq
\exp\left(-K_{8}\lambda^{\frac1{1-\alpha_1}}\right),
\end{align*}
where $K_8=\tilde K_5 c^{-1/(1-\alpha_1)}$ showing that Assumption \ref{assumptionB1} is fulfilled.
\end{proof}
\begin{remark}
We conjecture that the converse also holds, i.e.\ that the conditions of  Assumption \ref{assumptionB1} already imply the support condition in Proposition \ref{Taubersatz} so that Assumption \ref{assumptionB1} is superfluous. Due to Theorem 2.1 in \cite{Bro} we will need to show that if $X$ is of type $B$ then the support of $X(1)$ is contained in the halfspace $\mathbb H$. This is obviously fulfilled in case $d=1$. In the operator stable case this follows from the fact that 
$\mathcal K:=\{x\in V_1: p(t,x)>0\text{ for some }t>0\}$
is an open convex cone, where $x\mapsto p(t,x)$ denotes the continuous Lebesgue density of $X(t)$. But the arguments leading to this fact as given in section 3 of \cite{T} fail in case of the weaker semistable scaling property \eqref{opsem}.
\end{remark}
The following theorem states the main result of this paper.
\begin{theorem}\label{mainresult}
	Let $X$ be a $(c^E,c)$-operator semistable L\'evy process on $\rd$ with diagonal principal exponent $E_1$.
	\begin{itemize}
		\item[(i)] If $X$ is of type $A$ and $0<\alpha_1<\min\{2,d_1\}$ then
		\begin{align*}
		\phi(a)=a^{\alpha_1}\log\log\frac1a
		\end{align*}
		is an exact Hausdorff measure function for almost all sample paths of $X$ over the interval $[0,1]$.
		\item[(ii)] If $X$ is of type $B$ and $0<\alpha_1<1$, then, given Assumption \ref{assumptionB1},
		\begin{align*}
		\phi(a)=a^{\alpha_1}\left(\log\log\frac1a\right)^{1-\alpha_1}
		\end{align*}
		is an exact Hausdorff measure function for almost all sample paths of $X$ over the interval $[0,1]$.
	\end{itemize}
\end{theorem}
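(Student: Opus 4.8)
The plan is to prove both parts by the standard two-sided argument: establish an upper bound $\phi\text{-}m(X[0,1])<\infty$ almost surely via a covering/counting argument, and a lower bound $\phi\text{-}m(X[0,1])>0$ almost surely via Lemma \ref{UpperPhi} applied to the occupation measure. Throughout I would reduce to the principal component $X^{(1)}$, whose exponent is $\alpha_1^{-1}I^{d_1}$, exploiting that the other components $X^{(j)}$, $j\geq2$, contribute at a strictly faster scaling rate $c^{-1/\alpha_j}$ with $\alpha_j<\alpha_1$ and so are negligible at the relevant scales; this is where the spectral decomposition of Section \ref{subsecSpecDec} and the semistable scaling identity for $\|X(t)\|^2$ enter.

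For the \textbf{lower bound}, let $\mu$ be the occupation measure $\mu(B)=\int_0^1 1_B(X(t))\,dt$, so $\mu(X[0,1])=1$ and $\mu(B(X(s),r))\geq T(r,1)\circ\theta_s$-type quantities; more precisely $\mu(B(x,r))$ at a typical point $x=X(s)$ is comparable to a sojourn time. By Lemma \ref{UpperPhi} it suffices to bound the upper $\phi$-density $\overline D^\phi_\mu(x)$ from above uniformly on $X[0,1]$, i.e.\ to show $\limsup_{r\to0}\mu(B(X(s),r))/\phi(2r)<\infty$ almost surely, uniformly in $s$. Using the Markov property and stationarity of increments, $\mu(B(X(s),r))\leq$ (sojourn time of an independent copy in $B(0,2r)$ over $[0,1]$), and then: in type $A$ one controls $\Prob(T(r,1)>\lambda r^{\alpha_1})$ via Lemma \ref{sojourndiag}(i) together with a Markov/exponential-moment estimate giving sub-exponential (rather than doubly-exponential) tails, leading after a Borel--Cantelli argument along $r=2^{-n}$ to the $\log\log$ gauge; in type $B$ one uses Assumption \ref{assumptionB1} directly, whose stretched-exponential tail with exponent $1/(1-\alpha_1)$ is exactly what produces the $(\log\log\frac1a)^{1-\alpha_1}$ factor. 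The uniformity in $s$ is handled by a chaining/discretization over a net of times, absorbing the c\`adl\`ag oscillation.

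For the \textbf{upper bound}, the plan is the classical Taylor-type construction: partition $[0,1]$ into $\approx 2^n$ dyadic time intervals, cover $X$ over each by a ball whose radius reflects the oscillation of $X$, and estimate $\sum\phi(\text{diam})$. Using semi-selfsimilarity \eqref{opsem}, the oscillation of $X^{(1)}$ over an interval of length $2^{-n}$ has the order of a fixed random variable scaled by $2^{-n/\alpha_1}$ (up to the bounded periodic fluctuation inherent to the semistable case), while components $j\geq2$ oscillate at order $2^{-n/\alpha_j}=o(2^{-n/\alpha_1})$ and are swallowed. A first-moment computation of the number of dyadic intervals on which $X$ returns near a given point — controlled again by $\Exp[T(a,s)]\asymp a^{\alpha_1}$ from Lemma \ref{sojourndiag}(i) — shows the expected value of $\sum\phi(\text{diam})$ is bounded, and a Borel--Cantelli / subsequence argument upgrades this to an almost-sure finite bound with the correct gauge. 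The $\log\log$ (resp.\ $(\log\log)^{1-\alpha_1}$) factor appears because one must reject the ``bad'' intervals where the oscillation is anomalously large, and the probability of such large oscillation, via the tail estimates above, is summable exactly when the extra iterated-logarithm factor is inserted into $\phi$.

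The main obstacles are two. First, the weaker \emph{semistable} scaling \eqref{opsem} (valid only along the geometric sequence $c^n$, not all scalings) forces one to work with the $\log c$-periodic fluctuation functions throughout, replacing clean self-similar identities by two-sided comparisons; this is precisely why the Tauberian machinery of Section 2.4 (Theorem \ref{deBruijn} and Lemmas \ref{LTrho}--\ref{mubar}) was needed to get the correct stretched-exponential tail for semistable subordinators, and why Assumption \ref{assumptionB1} cannot simply be quoted from \cite{T}. Second, the \emph{non-diagonality} of $E_j$ for $j\geq2$: although these components are subordinate in scale, one must verify that $\|c^{rE_j}\|$ grows slowly enough (polynomial-times-exponential in $r$, with exponential rate $a_j>a_1$) that their contribution to diameters and to the occupation measure is genuinely lower order at every relevant scale, uniformly. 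I expect the type $B$ lower bound to be the single hardest point, since there the correct power $(\log\log\frac1a)^{1-\alpha_1}$ is delicate and depends on matching the exponent $1/(1-\alpha_1)$ in Assumption \ref{assumptionB1} against the $\limsup$ over the dyadic scales with the right constant — the analogue of the subtlety that \cite{HY} underestimated in their Remark 1.
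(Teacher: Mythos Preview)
Your lower-bound plan is essentially the paper's: occupation measure plus Lemma~\ref{UpperPhi}, with the sojourn-time tail coming from Lemma~\ref{sojourndiag}(i) and a Markov-type inequality in type~$A$, and directly from Assumption~\ref{assumptionB1} in type~$B$. One simplification you are missing: you do \emph{not} need uniformity in $s$ via chaining. The paper fixes $t_0$, splits the sojourn integral around $t_0$ by a time-reversal trick into two copies of $T(a,1)$, and proves the $\limsup$ bound almost surely for each fixed $t_0$; a Tonelli argument then shows the set $F\subseteq X([0,1])$ on which the density bound holds carries full occupation measure. This sidesteps any uniform estimate.

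The upper-bound plan, however, has a genuine gap. Covering by dyadic \emph{time} intervals and balls of radius the oscillation gives $\sum\phi(\mathrm{diam})\sim 2^n\cdot 2^{-n}\log\log 2^{n/\alpha_1}\asymp\log n$, which diverges; and rejecting intervals of anomalously \emph{large} oscillation does not rescue this, since the oscillation tail is only polynomial and $\Exp[R^{\alpha_1}]=\infty$ on the boundary. The paper's mechanism is the opposite of what you describe. One works with \emph{spatial} cubes of side $2^{-n}$ hit by $X$ (their expected number is $\lesssim 2^{n\alpha_1}$ by Lemma~\ref{PT} and Lemma~\ref{sojourndiag}), and calls a cube \emph{bad} if, after the hitting time $\tau^I$, the sojourn time $T(a,1)$ is \emph{small} relative to $\phi(a)$ for \emph{every} scale $a$ in a range $[\gamma_n,\delta]$. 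Good cubes are covered economically because a large sojourn at some scale $a$ lets a single ball absorb $\sim\phi(a)$ of time. The crucial estimate---entirely absent from your plan---is the \emph{lower} bound on the small-ball probability (Lemma~\ref{lemmaA2}):
\[
\Prob\Big(\sup_{0\leq t\leq\tau}\|X(t)\|\leq\tau^{1/\alpha_1}\lambda\Big)\geq\exp\big(-K\lambda^{-\alpha_1}\big)\quad\text{(type $A$)},\qquad \geq\exp\big(-K\lambda^{-\alpha_1/(1-\alpha_1)}\big)\quad\text{(type $B$)},
\]
which feeds into a first-exit-time estimate (Lemma~\ref{UpperBoundPa}) bounding the probability that a cube is bad by $\exp(-K(-\log\gamma_n)^{1/8})$. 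The distinct exponents $\alpha_1$ versus $\alpha_1/(1-\alpha_1)$ are exactly what produce the $\log\log$ versus $(\log\log)^{1-\alpha_1}$ gauges. Your tail-of-oscillation estimates (upper small-ball bounds) do enter, but only in a subsidiary role controlling the overlap terms $H_k$ in that lemma.

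Consequently your difficulty assessment is inverted: once Assumption~\ref{assumptionB1} is granted, the type~$B$ \emph{lower} bound is routine; the delicate step is the type~$B$ \emph{upper} bound, specifically deriving the $\exp(-K\lambda^{-\alpha_1/(1-\alpha_1)})$ small-ball lower bound in Lemma~\ref{lemmaA2}(ii), which the paper obtains by an iterative subadditivity argument exploiting $\alpha_1<1$.
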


\section{Proof}

To prove our main result we will show that the asserted $\phi$-Hausdorff measures of the range of $X$ are both, greater than zero and less than infinity.

% ----------- Greater than zero ---------------

\subsection{Greater than zero}

The following tail asymptotic of the sojourn times is true for any L\'evy process and will be used if $X$ is of type A. The proof can be found in Lemma 3.2 of Hou and Ying \cite{HY} and uses the Markov inequality.

\begin{lemma}\label{MarkovA}
	Let $X$ be a L\'evy process on $\rd$. Then for all $0<\delta<1$, $\lambda>0$ and $a>0$, we have that
	\begin{equation}\label{Levytail}
	\Prob\left(T(a,1)>\lambda\Exp[T(2a,1)]\right)\leq \frac1{1-\delta}\cdot\exp{(-\delta\lambda)}.
	\end{equation}
\end{lemma}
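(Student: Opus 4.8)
The plan is to exploit the almost sure monotonicity of $T(a,s)$ in $s$ together with a Markov-type exponential moment bound, following the standard argument behind such sojourn-time tail estimates. First I would introduce the exponential moment generating function: for a fixed $\delta\in(0,1)$ I want to control $\Exp\bigl[e^{\delta T(a,1)/\Exp[T(2a,1)]}\bigr]$, or rather the unnormalized version $\Exp[e^{\theta T(a,1)}]$ for a suitable $\theta>0$, and then apply Markov's inequality to the random variable $e^{\theta T(a,1)}$ to get $\Prob(T(a,1)>\lambda\Exp[T(2a,1)])\le e^{-\theta\lambda\Exp[T(2a,1)]}\Exp[e^{\theta T(a,1)}]$. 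Choosing $\theta=\delta/\Exp[T(2a,1)]$ will produce the factor $e^{-\delta\lambda}$, so everything reduces to showing $\Exp\bigl[e^{\delta T(a,1)/\Exp[T(2a,1)]}\bigr]\le\frac1{1-\delta}$.

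The key step is therefore the moment bound. I would expand the exponential as a power series, $\Exp[e^{\theta T(a,1)}]=\sum_{n\ge0}\frac{\theta^n}{n!}\Exp[T(a,1)^n]$, and estimate the moments $\Exp[T(a,1)^n]$. Writing $T(a,1)^n=n!\int_{0<t_1<\dots<t_n<1}\prod_{i=1}^n 1_{B(0,a)}(X(t_i))\,dt_1\cdots dt_n$ and using the Markov property of the L\'evy process at the successive times $t_1<\dots<t_n$, each increment $X(t_{i+1})-X(t_i)$ is independent of the past, and on the event that $X(t_i)\in B(0,a)$ the event $X(t_{i+1})\in B(0,a)$ forces the increment to lie in $B(0,2a)$; hence one obtains the recursive bound $\Exp[T(a,1)^n]\le n!\,\bigl(\sup_{y}\Exp[\int_0^1 1_{B(0,2a)}(X(s))\,ds]\bigr)^n$, and by translation the supremum over starting points is just $\Exp[T(2a,1)]$ (using $1_{B(y,2a)}\le$ a ball of radius $2a$ again, or more precisely $\Exp^y[T(a,1)]\le\Exp[T(2a,1)]$). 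Thus $\Exp[T(a,1)^n]\le n!\,\Exp[T(2a,1)]^n$, so $\Exp[e^{\theta T(a,1)}]\le\sum_{n\ge0}\theta^n\Exp[T(2a,1)]^n=\frac1{1-\theta\Exp[T(2a,1)]}$ whenever $\theta\Exp[T(2a,1)]<1$; with $\theta=\delta/\Exp[T(2a,1)]$ this equals $\frac1{1-\delta}$, completing the argument.

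The main obstacle is making the moment recursion fully rigorous: one must carefully set up the iterated-integral representation of $\Exp[T(a,1)^n]$, justify interchanging expectation and integration by Tonelli (the integrand is nonnegative), and correctly apply the Markov property at the ordered times so that the geometric-type bound $\Exp[T(a,1)^n]\le n!\,\Exp[T(2a,1)]^n$ drops out with the right constant and the right radius $2a$ (the doubling of the radius comes precisely from $\|X(t_{i+1})\|\le a$ and $\|X(t_i)\|\le a$ implying $\|X(t_{i+1})-X(t_i)\|\le 2a$). Once that inequality is in hand the Markov inequality and the geometric series are routine. Since the paper states that the proof is in Lemma 3.2 of \cite{HY}, I would present this argument compactly, emphasizing the moment bound as the crux.
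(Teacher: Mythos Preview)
Your argument is correct and is precisely the standard one: the paper does not prove this lemma but cites Lemma~3.2 of \cite{HY}, whose proof is exactly the exponential Markov inequality combined with the moment bound $\Exp[T(a,1)^n]\le n!\,\Exp[T(2a,1)]^n$ obtained via the Markov property and the inclusion $B(-y,a)\subseteq B(0,2a)$ for $\|y\|\le a$. Your sketch matches that approach; the only place to tighten the wording is that the ``$\sup_y$'' you write is really only needed over $y\in B(0,a)$, which is what makes the radius $2a$ (rather than something larger) appear.
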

\begin{remark}
Note that \eqref{Levytail} is only meaningful if $\lambda>1$ and in this case the right-hand side takes it minimum at $\delta=1-1/\lambda$. Thus for $\lambda>1$ the inequality \eqref{Levytail} becomes strongest in the form
$$\Prob\left(T(a,1)>\lambda\Exp[T(2a,1)]\right)\leq\lambda\exp(1-\lambda).$$
Note further that for an operator semistable L\'evy process of type $B$ with diagonal principal exponent $E_1$ and $\alpha_1<1$ by Lemma 2.2(i) our Assumption \ref{assumptionB1} is stronger than \eqref{Levytail} for large values of $\lambda$.
\end{remark}

\begin{lemma}\label{A1}
	Let $X$ be a $(c^E,c)$-operator semistable L\'evy process on $\rd$ with diagonal principal exponent $E_1$.
	\begin{itemize}
		\item[(i)] If $X$ is of type $A$ and $0<\alpha_1<\min\{2,d_1\}$ then for
		\begin{align*}
		\phi(a)=a^{\alpha_1}\log\log\frac1a
		\end{align*}
	there exists a positive constant $K_{91}$ such that for all $t_0\in[0,1]$ we have almost surely
	\begin{align}\label{condi}
	\limsup_{a\to0}\;\frac1{\phi(a)}\cdot\int_0^11_{B(X(t_0),a)}(X(t))\;dt\leq K_{91}.
	\end{align}
	\item[(ii)] If $X$ is of type $B$ and $0<\alpha_1<1$ then, given Assumption \ref{assumptionB1}, for
	\begin{align*}
	\phi(a)=a^{\alpha_1}\left(\log\log\frac1a\right)^{1-\alpha_1}
	\end{align*}
	there exists a positive constant $K_{92}$ such that for all $t_0\in[0,1]$ we have almost surely
	\begin{align}\label{condii}
	\limsup_{a\to0}\;\frac1{\phi(a)}\cdot\int_0^11_{B(X(t_0),a)}(X(t))\;dt\leq K_{92}.
	\end{align}
	\end{itemize}
\end{lemma}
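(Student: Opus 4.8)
The plan is to prove (i) and (ii) together as a Borel--Cantelli argument along a geometric sequence of radii $a_n = c^{-n}$ (or $a_n = 2^{-n}$, whichever is more convenient), using the doubling property \eqref{PropPhi} of $\phi$ to interpolate between consecutive radii. The quantity to control is $T(a,1;t_0) := \int_0^1 1_{B(X(t_0),a)}(X(t))\,dt$. First I would reduce to $t_0=0$: by the Markov property of the L\'evy process, conditionally on $X(t_0)$ the shifted process $\{X(t_0+s)-X(t_0)\}_{s\geq 0}$ is again a $(c^E,c)$-operator semistable L\'evy process with the same law, and the part of the integral over $[0,t_0]$ is handled by time-reversal (the reversed L\'evy process is again operator semistable); so it suffices to bound $T(a,1) = \int_0^1 1_{B(0,a)}(X(t))\,dt$ uniformly in the starting point, which is exactly the object appearing in Lemma~\ref{sojourndiag}, Lemma~\ref{MarkovA} and Assumption~\ref{assumptionB1}.

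Next I would set $\lambda_n$ so that the tail bound summed over $n$ converges. In case (i), applying Lemma~\ref{MarkovA} (in the sharp form from the Remark) with $a=a_n$ and $\lambda = \lambda_n$ gives $\Prob(T(a_n,1) > \lambda_n \Exp[T(2a_n,1)]) \leq \lambda_n e^{1-\lambda_n}$; choosing $\lambda_n = C\log n$ for a suitable constant $C>1$ makes the right-hand side summable, and by Lemma~\ref{sojourndiag}(i), $\Exp[T(2a_n,1)] \leq K_5 (2a_n)^{\alpha_1} \leq K a_n^{\alpha_1}$ (valid once $a_n^{\alpha_1} \leq s = 1$, i.e. for all large $n$). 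Hence Borel--Cantelli yields $T(a_n,1) \leq K a_n^{\alpha_1}\log n$ eventually, almost surely. Since $\log n \asymp \log\log(1/a_n)$, this is $\leq K\,\phi(a_n)$. In case (ii) the same scheme runs with Assumption~\ref{assumptionB1}: $\Prob(T(a_n,1) > \lambda_n a_n^{\alpha_1}) \leq \exp(-K_8 \lambda_n^{1/(1-\alpha_1)})$, and choosing $\lambda_n = (C\log n)^{1-\alpha_1}$ makes the bound summable once $C^{1/(1-\alpha_1)} K_8 > 1$; Borel--Cantelli then gives $T(a_n,1) \leq K a_n^{\alpha_1}(\log n)^{1-\alpha_1} \asymp K\,\phi(a_n)$ eventually.

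Finally I would pass from the discrete sequence $a_n$ to all small $a$: for $a\in[a_{n+1},a_n]$ we have $B(X(t_0),a) \subseteq B(X(t_0),a_n)$, so $T(a,1;t_0) \leq T(a_n,1;t_0) \leq K\phi(a_n) \leq K K_1 \phi(a_{n+1}) \leq K K_1 \phi(a)$ by monotonicity and one application of the doubling property (choosing $c=2$, or iterating \eqref{PropPhi} a bounded number of times to cover the ratio $a_n/a_{n+1}=c$). This gives \eqref{condi} and \eqref{condii} with $K_{91}$, resp. $K_{92}$, depending only on the constants $K_1,K_5,K_8,C$; moreover the estimate is uniform in $t_0\in[0,1]$ precisely because the tail bounds in Lemma~\ref{MarkovA} and Assumption~\ref{assumptionB1} do not depend on the starting point, so no separate union bound over $t_0$ is needed. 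The main obstacle is the reduction to $t_0=0$ — i.e. verifying that the sojourn-time tail estimates hold uniformly over all starting points and that the contribution from the time interval $[0,t_0]$ before $t_0$ is controlled; this is where the Markov property together with the time-reversal property of L\'evy processes must be invoked carefully, and it is the only place where genuine probabilistic input beyond the cited lemmas enters.
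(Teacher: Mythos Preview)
Your proposal is correct and follows essentially the same approach as the paper: reduce to $t_0=0$ by splitting the integral at $t_0$ and using that both the time-reversed process $\{X(t_0)-X(t_0-u)\}_{u\geq 0}$ and the shifted process $\{X(v+t_0)-X(t_0)\}_{v\geq 0}$ have the same law as $X$; then run Borel--Cantelli along $a_n=2^{-n}$ with $\lambda_n\asymp\log n$ (type~A, via Lemma~\ref{MarkovA} and Lemma~\ref{sojourndiag}(i)) or $\lambda_n\asymp(\log n)^{1-\alpha_1}$ (type~B, via Assumption~\ref{assumptionB1}), and interpolate by monotonicity and doubling. The only cosmetic difference is that the paper keeps the free parameter $\delta\in(0,1)$ in Lemma~\ref{MarkovA} rather than optimizing it to the form $\lambda e^{1-\lambda}$ as you do.
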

Note that \eqref{condii} differs from \eqref{condi} since the definition of $\phi$ varies from type $A$ to type $B$.
\begin{proof}
	Let $t_0\in[0,1]$. Define
	\begin{align*}
	Y(t)=\left\{\begin{array}{ll} X(t_0)-X(t_0-t), & \text{if } 0\leq t <t_0 \\
	\\
	X(t), & \text{if } t\geq t_0\end{array}\right..
	\end{align*}
	Using a change of variable by setting $u:=t_0-t$ and $v:=t-t_0$ we get
	\begin{align*}
	& \int_0^1 1_{B(X(t_0),a)}(X(t))dt\\
	= & \int_0^{t_0} 1_{B(X(t_0),a)}(X(t))dt + \int_{t_0}^1 1_{B(X(t_0),a)}(X(t))dt\\
	= & -\int_{t_0}^0 1_{B(X(t_0),a)}(X(t_0-u))du + \int_0^{1-t_0}1_{B(X(t_0),a)}(X(v+t_0))dv\\
	= & \int_0^{t_0} 1_{B(0,a)}(Y(u))du + \int_0^{1-t_0} 1_{B(0,a)}(X(v+t_0)-X(t_0))dv\\
	\leq & \int_0^{1} 1_{B(0,a)}(Y(u))du + \int_0^{1} 1_{B(0,a)}(X(v+t_0)-X(t_0))dv.
	\end{align*}
	Note that the processes $\{X(t)\}_{t\geq0}$, $\{Y(t)\}_{t\geq0}$ and $\{X(t+t_0)-X(t_0)\}_{t\geq0}$ have the same finite-dimensional distributions. Hence, it is sufficient to show that there exists a constant $K_9>0$ such that
	\begin{equation*}
	\Prob\left(\limsup_{a\to0}\frac{T(a,1)}{\phi(a)}<\frac{K_9}2\right)=1.
	\end{equation*}
	
	For $X$ of type $A$, $0<\alpha_1<\min\{2,d_1\}$ and $a>0$ small enough, we have by Lemma \ref{sojourndiag}(i) and Lemma \ref{MarkovA} that
	\begin{equation*}^{}
	\Prob\left(T(a,1)>\frac{K_4}{2^{\alpha_1}}\lambda a^{\alpha_1}\right)\leq \frac1{1-\delta}\cdot\exp{(-\delta\lambda)}
	\end{equation*}
	for all $\delta\in(0,1)$ and all $\lambda>0$. Now choose $\lambda=\frac2\delta\log\log\frac1a$. Then for $a>0$ small enough
	\begin{equation}\label{A1eqn1}
	\Prob\left(T(a,1)>\frac{2K_4}{\delta}a^{\alpha_1}\log\log\frac1a\right)\leq \frac1{1-\delta}\cdot\left(\log\frac1a\right)^{-2}
	\end{equation}
	For $n\in\nat$ define $a_n:=2^{-n}$ and $E_n:=\{T(a_n,1)>\frac{2K_4}{\delta}\cdot a_n^{\alpha_1}\log\log\frac1{a_n}\}$. By \eqref{A1eqn1} we get for sufficiently large $N\in\nat$
	\begin{align*}
	\sum_{n=N}^{\infty}\Prob(E_n)\leq \frac1{1-\delta}\;\sum_{n=N}^{\infty}\left(\log\frac1{a_n}\right)^{-2} =\frac{(\log2)^{-2}}{1-\delta}\;\sum_{n=N}^{\infty}\frac1{n^2}<\infty.
	\end{align*}
	Applying the Borel-Cantelli lemma, for almost all $\omega$ there exists an integer $N(\omega)$ such that the event $E_n$ does not occur for $n\geq N(\omega)$. For $a>0$ small enough, we can find  $n_0\geq N(\omega)$ such that $a_{n_0+1}\leq a\leq a_{n_0}$ which gives us
	\begin{align*}
	\frac{T(a)}{a^{\alpha_1}\log\log\frac1a}\leq\frac{T(a_{n_0})}{a_{n_0+1}^{\alpha_1}\log\log\frac1{a_{n_0}}}\leq \frac{2K_4 a_{n_0}^{\alpha_1}\log\log\frac1{a_{n_0}}}{\delta a_{n_0+1}^{\alpha_1}\log\log\frac1{a_{n_0}}}<\frac{K_4 2^{1+\alpha_1}}{\delta}.
	\end{align*}
	For $K_{91}:=K_4\; 2^{2+\alpha_1}/\delta$ this concludes the proof of part (i).
	
	Now let $X$ be of type $B$ and $0<\alpha_1<1$. By Assumption \ref{assumptionB1} there exist positive constants $K_{8}$, $\lambda_0>0$ such that for all $\lambda\geq\lambda_0$
	\begin{align*}
	\Prob\left(T(a,1)> \lambda a^{\alpha_1}\right)\leq \exp\left(-K_{8}\lambda^{\frac1{1-\alpha_1}}\right).
	\end{align*}
	Put $\lambda = \left(\frac2{K_{8}}\cdot\log\log\frac1a\right)^{1-\alpha_1}$. For all $a>0$ sufficiently small, such that $\lambda\geq\lambda_0$, we then get
	\begin{align*}
	&\Prob\left(T(a,1)> \left(\frac2{K_{8}}\cdot\log\log\frac1a\right)^{1-\alpha_1}\cdot a^{\alpha_1}\right)\leq \exp\left(-K_{8}\cdot\left(\frac2{K_{8}}\log\log\frac1a\right)\right)\\
	& = \exp\left(-2\log\log\frac1a\right) = \left(\log\frac1a\right)^{-2}.
	\end{align*}
	Let $a_n = 2^{-n}$ and $E_n=\left\{T(a_n,1)> \left(\frac2{K_{8}}\cdot\log\log\frac1{a_n}\right)^{1-\alpha_1}\cdot a_n^{\alpha_1}\right\}$. Then for $N\in\nat$ sufficiently large
	\begin{align*}
	\sum_{n=N}^{\infty}\Prob(E_n)\leq\sum_{n=N}^{\infty}\left(\log \frac1{a_n}\right)^{-2} = (\log2)^{-2} \sum_{n=N}^{\infty} \frac1{n^2}<\infty.
	\end{align*}
	By Borel Cantelli, for almost all $\omega$ there exists an integer $N(\omega)$ such that $E_n$ does not occur for $n\geq N(\omega)$. If $a_{n+1}\leq a<a_n$ and $n\geq N(\omega)$
	\begin{align*}
	\frac{T(a)}{a^{\alpha_1}\left(\log\log\frac1a\right)^{1-\alpha_1}}\leq \frac{T(a_n)}{a_{n+1}^{\alpha_1}\left(\log\log\frac1{a_n}\right)^{1-\alpha_1}}\leq 2\cdot K_{8}^{\alpha_1-1}
	\end{align*}
	Setting $K_{92}:=4\cdot\left(K_{8}\right)^{\alpha_1-1}$ concludes the proof.
\end{proof}

\begin{theorem}\label{greaterthanzero}
	Let $X$ be a $(c^E,c)$-operator semistable L\'evy process on $\rd$ with diagonal principal exponent $E_1$.
	\begin{itemize}
		\item[(i)] If $X$ is of type $A$ and $0<\alpha_1<\min\{2,d_1\}$ then for
		\begin{align*}
		\phi(a)=a^{\alpha_1}\log\log\frac1a
		\end{align*}
		we have $\phi-m(X([0,1]))>0$ almost surely.
		\item[(ii)] If $X$ is of type $B$ and $0<\alpha_1<1$ then, given Assumption \ref{assumptionB1}, for
		\begin{align*}
		\phi(a)=a^{\alpha_1}\left(\log\log\frac1a\right)^{1-\alpha_1}
		\end{align*}
		we have $\phi-m(X([0,1]))>0$ almost surely.
	\end{itemize}
\end{theorem}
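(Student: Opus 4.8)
The plan is to apply the Rogers--Taylor-type lower bound of Lemma \ref{UpperPhi} to the occupation measure of $X$ over $[0,1]$, after restricting to the subset of the range on which the upper $\phi$-density is controlled by Lemma \ref{A1}. Fix a sample path and let $\mu=\mu_\omega$ be the Borel measure on $\rd$ defined by $\mu(A)=\int_0^1 1_A(X(t))\,dt$, i.e.\ the push-forward of Lebesgue measure on $[0,1]$ under $t\mapsto X(t)$; then $\mu$ is carried by the range $X([0,1])$ and $\mu(\rd)=1$. Since $\phi$ is increasing we have $\mu(B(x,r))/\phi(2r)\le\mu(B(x,r))/\phi(r)$ for every $x\in\rd$ and $r>0$, so Lemma \ref{A1} (with $K_9:=K_{91}$ in case (i) and $K_9:=K_{92}$ in case (ii)) gives, for each fixed $t_0\in[0,1]$,
\begin{equation*}
\overline D_\mu^\phi(X(t_0))=\limsup_{r\to0}\frac{\mu(B(X(t_0),r))}{\phi(2r)}\le K_9\qquad\text{almost surely.}
\end{equation*}

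Next I would upgrade this, via Fubini's theorem, to a statement valid for $\mu$-almost every point of the range. Granting the (routine) joint measurability of $(\omega,t_0)\mapsto\overline D_\mu^\phi(X(t_0))$ and the Borel measurability of $x\mapsto\overline D_\mu^\phi(x)$, the bound above gives $\int_0^1\Prob\bigl(\overline D_\mu^\phi(X(t_0))>K_9\bigr)\,dt_0=0$, so by Fubini the random set $N=\{t_0\in[0,1]:\overline D_\mu^\phi(X(t_0))>K_9\}$ has Lebesgue measure zero almost surely. Since $\mu$ is the image of Lebesgue measure under $X$, it follows that
\begin{equation*}
\mu\bigl(\{x:\overline D_\mu^\phi(x)>K_9\}\bigr)=\int_0^1 1\bigl\{\overline D_\mu^\phi(X(t))>K_9\bigr\}\,dt=0\qquad\text{almost surely.}
\end{equation*}

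Finally, set $F=F_\omega:=X([0,1])\cap\{x:\overline D_\mu^\phi(x)\le K_9\}$, a Borel subset of the range. By the last display $\mu(F)=\mu(X([0,1]))=1$ almost surely, while $\inf_{x\in F}1/\overline D_\mu^\phi(x)\ge1/K_9>0$ by construction. Since $\phi\in\Phi$, Lemma \ref{UpperPhi} applied to $\mu$ and $F$ yields $\phi-m(F)\ge K_2\,\mu(F)/K_9=K_2/K_9>0$, and by monotonicity of the $\phi$-Hausdorff measure, $\phi-m(X([0,1]))\ge\phi-m(F)>0$ almost surely; this proves the theorem in both cases with the respective gauge function $\phi$. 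I expect the only genuine subtlety to be bookkeeping rather than analysis: one cannot control $\overline D_\mu^\phi$ at every point of the uncountable range simultaneously, because the exceptional null set in Lemma \ref{A1} depends on $t_0$, so one must settle for a $\mu$-almost-everywhere bound obtained by Fubini and then use monotonicity of $\phi-m$ on the sublevel set $F$. The measurability claims invoked above are standard, using for instance that $r\mapsto\mu(B(x,r))$ is right-continuous and $(\omega,A)\mapsto\mu_\omega(A)$ is jointly measurable.
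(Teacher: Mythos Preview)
Your proof is correct and follows essentially the same approach as the paper: define the occupation measure $\mu$, invoke Lemma \ref{A1} for each fixed $t_0$, upgrade via Fubini/Tonelli to a $\mu$-almost-everywhere bound on the upper $\phi$-density, restrict to the sublevel set $F$, and apply Lemma \ref{UpperPhi} together with monotonicity of $\phi$-$m$. The only cosmetic differences are that the paper parametrizes $F$ by the times $t_0$ at which \eqref{condi}/\eqref{condii} holds (equivalent to your spatial description, since the limsup depends only on the point $X(t_0)$), and that you are slightly more explicit about the $\phi(r)$ versus $\phi(2r)$ comparison and the measurability bookkeeping.
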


\begin{proof}
	For all subsets $A\in\rd$ define the random Borel measure $\mu$ as
	\begin{equation*}
	\mu(A)=\int_0^1 1_A(X(t))dt.
	\end{equation*}
	This gives us $\mu(X([0,1]))=1$ for all $\omega\in\Omega$. Let 
	$$F=\{X(t_0):t_0\in[0,1]\text{ and \eqref{condi}, resp.\ \eqref{condii} holds} \}\subseteq X([0,1]).$$ 
	By Tonelli's theorem we have almost surely
	\begin{align*}
	\mu(F) & = \int_0^1 1_F(X(t))dt = \int_0^1 1_{\{X(t_0)\;:\;t_0\in[0,1]\text{ and \eqref{condi}, resp.\ \eqref{condii} holds} \}}(X(t))dt\\
	& = \int_0^1\int_0^1 1_{\{X(t_0)\;:\text{ \eqref{condi}, resp.\ \eqref{condii} holds} \}}(X(t))dt_0\;dt\\
	& = \int_0^1\int_0^1 1_{\{X(t_0)\;:\text{ \eqref{condi}, resp.\ \eqref{condii} holds} \}}(X(t))dt\;dt_0 = \int_0^1 1 dt_0 = 1.
	\end{align*}
	Applying Lemma \ref{UpperPhi} and Lemma \ref{A1} and using the fact that $\phi$ is ultimately increasing, we have that almost surely
	\begin{align*}
	\phi-m(F)& \geq K_2 \;\mu(F)\inf_{X(t_0)\in F}\left(\limsup_{a\to\infty}\frac{\mu(B(X(t_0),a))}{\phi(2a)}\right)^{-1} \\
	& \geq K_2\cdot 1\cdot \inf_{X(t_0)\in F}\left(\limsup_{a\to\infty}\frac{\mu(B(X(t_0),a))}{\phi(2a)}\right)^{-1} \geq \frac{K_2}{\max\{K_{91},K_{92}\}}>0.
	\end{align*}
	Since $F\subseteq X([0,1])$ this concludes the proof.
\end{proof}

\begin{remark}
Similarly, if $X$ is of type $A$, $d\geq2$ and $\alpha_1>d_1=1$ then for 
\begin{equation}\label{phitypb}
	\phi(a)=a^{\rho}\log\log\frac1a
\end{equation}
with $\rho=1+\alpha_2(1-1/\alpha_1)$ we have $\phi-m(X([0,1]))>0$ almost surely. This follows analogously to the proof of Theorem \ref{greaterthanzero}(i) using Lemma \ref{sojourndiag}(ii) instead of part (i) in the proof of Lemma \ref{A1}. Unfortunately, in this case we were not able to show that $\phi-m(X([0,1]))<\infty$.
\end{remark}
 
 % ----------------- Less than infinity ------------------
 
 \subsection{Less than infinity}
 
 \begin{lemma}\label{lemmaA2}
 	Let $X$ be a $(c^E,c)$-operator semistable L\'evy process on $\rd$ with diagonal principal exponent $E_1$. 
 	\begin{itemize}
 		\item[(i)] If $X$ is of type $A$, then there exists a constant $K_{10}>0$ such that for all $0<\lambda<1$ and $0<\tau<1$
 		\begin{equation}
			\Prob\left(\sup_{0\leq t\leq \tau}\|X(t)\|\leq \tau^{\frac{1}{\alpha_1}}\lambda\right)\geq \exp\left(-K_{10}\lambda^{-\alpha_1}\right).
 		\end{equation}
 		\item[(ii)] If $X$ is of type $B$ and $0<\alpha_1<1$, then there exist constants $K_{11}, \lambda_0>0$ such that for all $0<\lambda<\lambda_0$ and $0<\tau<1$
 		$$
 		\Prob\left(\sup_{0\leq t\leq\tau}\|X(t)\|\leq\tau^{\frac1{\alpha_1}}\lambda\right)\geq\exp\left(-K_{11}\lambda^{-\frac{\alpha_1}{1-\alpha_1}}\right).
 		$$
 	\end{itemize}
 \end{lemma}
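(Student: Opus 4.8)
The plan is first to reduce, by monotonicity and the semistable scaling \eqref{opsem}, to a small-ball estimate on the fixed interval $[0,1]$ for the rescaled L\'evy process $Y_n:=c^{-nE}X$, and then to run a subdivision argument adapted to the fact that \eqref{opsem} only rescales lengths by powers of $c$. Since $s\mapsto\sup_{0\le t\le s}\|X(t)\|$ is nondecreasing, picking $n\in\nat_0$ with $c^{-(n+1)}<\tau\le c^{-n}$ gives $\Prob(\sup_{0\le t\le\tau}\|X(t)\|\le\tau^{1/\alpha_1}\lambda)\ge\Prob(\sup_{0\le t\le c^{-n}}\|X(t)\|\le c^{-(n+1)/\alpha_1}\lambda)$, so it suffices to bound $\Prob(\sup_{0\le t\le c^{-n}}\|X(t)\|\le c^{-n/\alpha_1}\mu)$ from below, uniformly in $n\in\nat_0$ and in $\mu$ ranging over a fixed interval $(0,\mu_0)$ (the passage from $\lambda$ to $\mu$ only rescales the constant in front of the exponential; for $\lambda$ --- equivalently $\mu$ --- bounded away from $0$ the estimate is trivial, the probability being bounded below by a positive constant by stochastic continuity and a check over finitely many $n$). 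By \eqref{opsem} this probability equals $\Prob(\sup_{0\le s\le1}\|Y_n(s)\|\le r)$ with $r=c^{-n/\alpha_1}\mu$. Throughout one uses that, because $a_j>a_1=1/\alpha_1$ for $j\ge2$, one has $\|c^{-sE_j}\|\le C_j(1+s)^{\beta_j}c^{-sa_j}\ll c^{-s/\alpha_1}$ for $s$ large, so after rescaling the blocks $V_2,\dots,V_p$ sit on scales far below the $V_1$-scale $c^{-s/\alpha_1}$ and enter every comparison below only as harmless lower-order terms (for $\mu$ small, uniformly in $n$, since the number of subdivision steps introduced next already tends to $\infty$).

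For type B I would partition $[0,1]$ into $c^m$ equal subintervals, choosing $m\in\nat$ so that $c^m$ is of order $\mu^{-\alpha_1/(1-\alpha_1)}$ --- this is where $\alpha_1<1$ is used. Require that the oscillation of $Y_n$ over each subinterval be at most $\eta:=r/(2c^m)$; then the partial sums at the subdivision points satisfy $\|Y_n(kc^{-m})\|\le k\eta\le r/2$, whence $\sup_{0\le s\le1}\|Y_n(s)\|\le r$. The $c^m$ oscillation events are \emph{independent} (disjoint increments of a L\'evy process), and by \eqref{opsem} each has probability $\Prob(\sup_{0\le s\le c^{-m}}\|Y_n(s)\|\le\eta)=\Prob(\sup_{0\le s\le1}\|c^{-(n+m)E}X(s)\|\le\eta)$, which is bounded below by $\Prob\big(\sup_{0\le s\le1}(\|X^{(1)}(s)\|+\text{lower-order }V_j\text{-terms})\le\delta\big)$ for a fixed $\delta>0$, because with this choice of $c^m$ the dilation factor $\eta c^{(n+m)/\alpha_1}=\frac12\mu c^{m(1-\alpha_1)/\alpha_1}$ is of order one. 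Since $\Prob(\sup_{0\le s\le1}\|X^{(1)}(s)\|\le\delta/2)>0$ for any fixed $\delta$, each event has probability $\ge q_B>0$ uniformly, and multiplying over the $c^m$ subintervals gives $\Prob(\sup_{0\le s\le1}\|Y_n(s)\|\le r)\ge q_B^{\,c^m}=\exp(-K\mu^{-\alpha_1/(1-\alpha_1)})$, as required. No continuity or positivity of a density is used; one event controls partial sums and oscillations at once.

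For type A one needs the stronger exponent $\mu^{-\alpha_1}$, and this tube argument becomes circular: with $c^m$ of order $\mu^{-\alpha_1}$ the per-step tube radius $\eta\asymp c^{-n/\alpha_1}\mu/c^m$ lies far below the natural increment scale $c^{-(n+m)/\alpha_1}$, so ``each increment stays in $B(0,\eta)$'' is again a degenerating small-ball event. Instead I would partition $[0,1]$ into $c^m$ pieces with $c^m$ of order $c_0\mu^{-\alpha_1}$ for a fixed small $c_0>0$, and control separately the positions $X(kc^{-m})$ and the within-piece oscillations. For the positions one uses that, $X$ being of type A, $X(1)$ has a bounded continuous Lebesgue density that is positive --- hence bounded below --- on a neighbourhood of the origin; by \eqref{opsem} the step increment is distributed as $c^{-mE}X(1)$, whose density is therefore bounded below by a fixed multiple of its supremum on a product-ellipsoidal neighbourhood $\mathcal E$ of $0$ whose $V_1$-factor is a ball of radius $\asymp\mu$ and whose $V_j$-factors are $c^{-mE_j}B(0,\delta_j)$; the point of this shape is that the anisotropic Jacobian factors $c^{m\operatorname{tr}E_j}$ cancel, so that each one-step transition probability (from a point of $\frac18\mathcal E$ into $\frac18\mathcal E$) is bounded below by a fixed positive constant $q_A$, and iterating over the $c^m$ steps via the Markov property gives $\exp(-K\mu^{-\alpha_1})$. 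Each within-piece oscillation is, by \eqref{opsem}, distributed as $\sup_{0\le s\le1}\|c^{-(n+m)E}X(s)\|$, which at this scale is a fixed-scale small-ball quantity for $X^{(1)}$ plus lower-order terms and hence has a fixed positive probability; independence across pieces yields another factor $\exp(-K\mu^{-\alpha_1})$. Finally the position- and oscillation-controls, which are not independent, are combined by processing the $c^m$ pieces one at a time with the Markov property, at each step demanding both that the new position lie in $\frac18\mathcal E$ and that the oscillation over that piece be small; a uniform positive lower bound for this joint one-step probability follows, as in Taylor's treatment of stable processes in \cite{T}, from the density estimate on the ellipsoid together with the fixed-scale oscillation bound. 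Re-absorbing the residual factor from $c^{-(n+1)}<\tau\le c^{-n}$ and the passage from $\lambda$ to $\mu$ into the constants yields $K_{10}$ in (i) and $K_{11},\lambda_0$ in (ii). The non-principal spectral components, being lower order, contribute at worst an extra fixed positive factor and are handled by the same circle of ideas (density estimates with block-ellipsoids, or the tube argument, according to their indices).

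I expect the type-A argument to be the main obstacle. Breaking the circularity genuinely requires positivity of the density at the origin --- exactly Taylor's type-A/type-B dichotomy reappearing --- and the density estimate must then be run against a non-scalar exponent $E$, which is where the product-ellipsoidal neighbourhoods and the bookkeeping of the Jordan-block factors $(1+s)^{\beta_j}$ enter and where the weaker semistable scaling (subdivisions only into $c^m$ equal pieces, rescaling only by powers of $c$) makes the argument heavier than in \cite{T}; moreover every ``fixed positive constant'' ($q_A,q_B,\delta,c_0,\delta_j,\dots$) has to be verified uniform over $n\in\nat_0$, over the residual scale factors in $[1,c)$, and over $\mu\downarrow0$.
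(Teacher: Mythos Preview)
Your type-B route matches the paper's: there too one reduces to $g(\lambda)=\Prob(\sup_{[0,1]}\|X\|\le c^{-1/\alpha_1}\lambda)$ and subdivides $[0,1]$ into $k$ equal pieces, obtaining the recursion $g(\lambda)\ge\bigl[g(k^{1/\alpha_1-1}c^{-1/\alpha_1}\lambda)\bigr]^k$; setting $h=\log g$ and evaluating along $x_k=k^{1-1/\alpha_1}c^{1/\alpha_1}\downarrow0$ gives the bound. This is your tube argument with the choice of $c^m$ absorbed into the sequence $x_k$.

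For type A you and the paper genuinely diverge. You scale \emph{down} first and then subdivide at the small scale, which forces the density lower bound to be applied to $c^{-(n+m)E}X(1)$ (not $c^{-mE}X(1)$, incidentally --- the $n$ persists through your $Y_n$) and hence requires the product-ellipsoidal neighbourhoods and Jacobian cancellations you describe. The paper instead scales \emph{up}: it first proves, entirely at the native scale of $X$, that for a single fixed large $r>1$ one has
\[
\Prob\Bigl(\sup_{0\le t\le k}\|X(t)\|<r,\ \|X(k)\|<\delta\Bigr)\ge\bigl(\tfrac12C_2\bigr)^k\qquad\text{for all }k\in\nat,
\]
by iterating over unit time intervals via the Markov property and the type-A bound $\Prob(\|X(1)+x\|<\delta)\ge C_2$ for $\|x\|<\delta$ coming from the density at $0$. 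Only afterwards does semistability enter: for $0<\tau,\lambda<1$ one picks $i,j$ with $c^{-(i+1)}\le\tau<c^{-i}$ and $c^{j}\asymp(r/\lambda)^{\alpha_1}$, uses $\|c^{-(j+i)E}\|\le Kc^{-(j+i)/\alpha_1}$ (valid since $a_\ell\ge a_1$ and $E_1$ is diagonal), and obtains
\[
\Prob\Bigl(\sup_{0\le t\le\tau}\|X(t)\|\le\tau^{1/\alpha_1}\lambda\Bigr)\ge\Prob\Bigl(\sup_{0\le t\le c^j}\|X(t)\|<r\Bigr)\ge\exp(-C_3c^j)\ge\exp(-K_{10}\lambda^{-\alpha_1}).
\]
The gain is that the density/position/oscillation work is done once, at a fixed scale, in the round norm --- no ellipsoids, no Jordan-block bookkeeping, and the non-principal spectral components are absorbed by a single operator-norm inequality in the final scaling step. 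Your route is viable, but this ``fixed radius first, scale last'' order of operations sidesteps precisely the obstacle you flag as the main one.
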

 
 \begin{proof}
 	(i) Let $p(t,.)$ be the density function of $X(t)$ for $t>0$. Since the process is of type $A$, the density function $p(1,\cdot)$ is bounded and continuous and $p(1,0)>0$. Hence, we can find $\delta, \eta>0$ such that for all $x\in\rd$ with $\|x\|<2\delta$ we have that $p(1,x)\geq\eta$.  
	Then for $\|x\|<\delta$ this leads to
 	\begin{align*}
 	&\Prob(\|X(1)+x\|<\delta) = \int_{\rd} 1_{\{\|y+x\|<\delta\}}\;p(1,y)dy\\
 	&\geq \int_{-\infty}^{\infty}\cdots\int_{-\infty}^{\infty} 1_{\{|y_1+x_1|<\frac\delta{\sqrt{d}}\}}\cdot\ldots\cdot1_{\{|y_d+x_d|<\frac\delta{\sqrt{d}}\}}\; p(1,y)\; dy_1\cdots dy_d\\
 	&= \int_{-2\delta}^{2\delta}\cdots\int_{-2\delta}^{2\delta} 1_{\{|y_1+x_1|<\frac\delta{\sqrt{d}}\}}\cdot\ldots\cdot1_{\{|y_d+x_d|<\frac\delta{\sqrt{d}}\}}\; p(1,y)\; dy_1\cdots dy_d\\
 	&\geq \;\eta\int_{-2\delta}^{2\delta}\cdots\int_{-2\delta}^{2\delta} 1_{\{|y_1+x_1|<\frac\delta{\sqrt{d}}\}}\cdot\ldots\cdot1_{\{|y_d+x_d|<\frac\delta{\sqrt{d}}\}}\; dy_1\cdots dy_d\\
 	&=\; \eta \prod_{i=1}^{d}\left(\frac\delta{\sqrt{d}}-x_i-\left(-\frac\delta{\sqrt{d}}-x_i\right)\right) = \eta \prod_{i=1}^{d}\left(\frac{2\delta}{\sqrt{d}}\right)=\eta \left(\frac{2\delta}{\sqrt{d}}\right)^d=:C_2>0.
 	\end{align*}
 	Furthermore, since the process $X$ has c\`adl\`ag paths, it is almost surely bounded on finite intervals. Hence, by tightness we can find $r>1$ large enough such that
 	\begin{equation*}
 	\Prob\left(\sup_{0\leq t\leq1}\|X(t)\|\geq r-\delta\right)<\frac12 C_2
 	\end{equation*}
 	Altogether, we get for all $\|x\|<\delta$
 	\begin{align*}
 	&\Prob\left(\sup_{0\leq t\leq1}\|X(t)+x\|<r,\|X(1)+x\|<\delta\right)\\
 	& =  \Prob\left(\|X(1)+x\|<\delta\right) - \Prob\left(\sup_{0\leq t\leq1}\|X(t)+x\|\geq r, \|X(1)+x\|<\delta\right)\\
 	& \geq  C_2 - \Prob\left(\sup_{0\leq t\leq1}\|X(t)+x\|\geq r\right)\\
 	& \geq C_2 - \Prob\left(\sup_{0\leq t\leq1}\|X(t)\|\geq r-\delta\right) > \frac12 C_2
 	\end{align*} 
 	Let $k\in\nat$. By induction, it now follows from the properties of a L\'evy process that
 	\begin{align*}
 	& \Prob\left(\sup_{0\leq t\leq k}\|X(t)\|<r\right)\geq \Prob\left(\sup_{0\leq t\leq k}\|X(t)\|<r, \|X(k)\|<\delta\right)\\
 	& \geq \Prob\left(\sup_{k-1\leq t\leq k}\|X(t)\|<r, \|X(k)\|<\delta, \sup_{0\leq t\leq k-1}\|X(t)\|<r, \|X(k-1)\|<\delta\right)\\
 	& \quad\quad\left. \sup_{0\leq t\leq k-1}\|X(t)\|<r, \|X(k-1)\|<\delta\right)\\
 	& = \int_{[0,r)}\int_{\{\|x\|<\delta\}} \Prob\left(\sup_{k-1\leq t\leq k}\|X(t)-X(k-1)+x\|<r, \|X(k)-X(k-1)+x\|<\delta\right)\\
 	&\hspace{3cm}\text{d}\Prob_{\left(\sup\limits_{0\leq t\leq k-1}\|X(t)\|, \|X(k-1)\|\right)}(x,y)\\
 	& = \int_{[0,r)}\int_{\{\|x\|<\delta\}} \Prob\left(\sup_{k-1\leq t\leq k}\|X(t)+x\|<r, \|X(1)+x\|<\delta\right)\\
 	&\hspace{3cm}\text{d}\Prob_{\left(\sup\limits_{0\leq t\leq k-1}\|X(t)\|, \|X(k-1)\|\right)}(x,y)\\
 	&\geq \frac12 C_2 \cdot \Prob\left(\sup_{0\leq t\leq k-1}\|X(t)\|<r, \|X(k-1)\|<\delta\right)\geq\Big(\frac12 C_2\Big)^k=\exp\left(-k\log\left(2C_2^{-1}\right)\right).
 	\end{align*}
 	For $u>1$ choose $k\in\nat$ with $k\leq u<k+1$. Then for all $r>1$ large enough we have
 	\begin{align*}
 	&\Prob\left(\sup_{0\leq t\leq u}\|X(t)\|<r\right) \geq \Prob\left(\sup_{0\leq t\leq k+1
 		}\|X(t)\|<r\right)\\
 		&\geq \exp\left(-(k+1)\log\left(2\;C_2^{-1}\right)\right) = \exp\left(-k\cdot\frac{k+1}{k}\cdot\log\left(2\;C_2^{-1}\right)\right)\\
 		&\geq \exp\left(-u\cdot 2\log\left(2\;C_2^{-1}\right)\right) =:\exp\left(-C_3\;u\right),
 	\end{align*}
 	where $C_3>0$ is a constant independent from $u$.
 	Now let $0<\tau<1$. Then there exists an $i\in\nat_0$ such that $c^{-(i+1)}\leq\tau<c^{-i}$ and for $0<\lambda<1$ there exists a $j\in\nat$ such that $c^{j-2}\leq\lambda^{-\alpha_1} r^{\alpha_1}<c^{j-1}$. Using the fact that for diagonal $E_1$ we have $\|s^E\|\leq s^{1/\alpha_1}$ for $0<s<1$, this leads us to
 	\begin{align*}
 	&\Prob\left(\sup_{0\leq t\leq \tau}\|X(t)\|\leq \tau^{\frac{1}{\alpha_1}}\lambda\right) \geq \Prob\left(\sup_{0\leq t\leq c^{-i}}\|X(t)\|\leq c^{-\frac{i+1}{\alpha_1}}\lambda\right)\\
 	& = \Prob\left(\sup_{0\leq t\leq c^{j}}\|X(c^{-j-i}t)\|\leq c^{-\frac{i+1}{\alpha_1}}\lambda\right)\geq \Prob\left(\sup_{0\leq t\leq c^{j}} c^{-\frac{j+i}{\alpha_1}} \|X(t)\|\leq c^{-\frac{i+1}{\alpha_1}}\lambda \right) \\
 	&\geq \Prob\left(\sup_{0\leq t\leq c^{j}}\|X(t)\|\leq c^{\frac{j-1}{\alpha_1}}\lambda\right) \geq \Prob\left(\sup_{0\leq t\leq c^{j}}\|X(t)\| < r\right) \\
 	&\geq \exp\left(-C_3 \cdot c^j\right) \geq \exp\left(-K_{10}\lambda^{-\alpha_1}\right),
 	\end{align*}
 	where $K_{10}:=C_3\cdot c^2\cdot r^{\alpha_1}$.
 	
 	(ii) Let $0<\tau<1$. Then there exists an $i_1\in\nat_0$ with $c^{-(i_1+1)}\leq\tau<c^{-i_1}$. We have
 	\begin{align*}
 	&\Prob\left(\sup_{0\leq t\leq\tau}\|X(t)\|\leq\tau^{\frac1{\alpha_1}}\lambda\right)\geq \Prob\left(\sup_{0\leq t\leq c^{-i_1}}\|X(t)\|\leq c^{-\frac{i_1+1}{\alpha_1}}\lambda\right)\\
 	&=\Prob\left(\sup_{0\leq t\leq 1}\|X(c^{-i_1}t)\|\leq c^{-\frac{i_1+1}{\alpha_1}}\lambda\right)\geq \Prob\left(\sup_{0\leq t\leq 1}\|X(t)\|\leq c^{-\frac{1}{\alpha_1}}\lambda\right)=:g(\lambda),
 	\end{align*}
 	where the last inequality follows from the fact that for diagonal $E_1$ we have $\|m\|^E\leq m^{1/\alpha_1}$ for all $m\in(0,1]$. Furthermore for all $k\in\nat\setminus\{1\}$ we can find a $j\in\nat_0$ such that $c^{-(j+1)}\leq k^{-1}<c^{-j}$. Since $X$ has independent and stationary increments we get
 	\begin{align*}
 	g(\lambda)&\geq\Prob\left(\bigcap_{i=1}^k\left\{\sup_{\frac{i-1}{k}\leq t\leq\frac i k}\left\|X(t)-X\left(\frac{i-1}{k}\right)\right\|<k^{-1}c^{-\frac1{\alpha_1}}\lambda\right\}\right)\\
 	&=\prod_{i-1}^k\Prob\left(\sup_{0\leq t\leq k^{-1}}\|X(t)\|<k^{-1}c^{-\frac1{\alpha_1}}\lambda\right) \\
 	&\geq \left[\Prob\left(\sup_{0\leq t\leq c^{-j}}\|X(t)\|<c^{-(j+1)}c^{-\frac1{\alpha_1}}\lambda\right)\right]^k\\
 	&= \left[\Prob\left(\sup_{0\leq t\leq 1}\|X(c^{-j}t)\|<c^{-j}c^{-\frac1{\alpha_1}-1}\lambda\right)\right]^k\\
 	&\geq \left[\Prob\left(\sup_{0\leq t\leq 1}\|X(t)\|<c^{j(\frac1{\alpha_1}-1)}c^{-\frac1{\alpha_1}-1}\lambda\right)\right]^k\\
 	&= \left[\Prob\left(\sup_{0\leq t\leq 1}\|X(t)\|<c^{(j+1)(\frac1{\alpha_1}-1)}c^{-\frac2{\alpha_1}}\lambda\right)\right]^k\\
 	&\geq \left[\Prob\left(\sup_{0\leq t\leq 1}\|X(t)\|<k^{\frac1{\alpha_1}-1}c^{-\frac2{\alpha_1}}\lambda\right)\right]^k =\left[g\left(k^{\frac1{\alpha_1}-1}c^{-\frac1{\alpha_1}}\lambda\right)\right]^k
 	\end{align*}
 	Define $h(\lambda)=\log g(\lambda)$. Then
 	$$
 	h(\lambda)\geq k\cdot h\left(k^{\frac1{\alpha_1}-1}c^{-\frac1{\alpha_1}}\lambda\right).
 	$$
 	Furthermore define the sequence $(x_k)_{k\in\nat}$ as $x_k=k^{1-\frac1{\alpha_1}}c^{\frac1{\alpha_1}}$, then $x_{k+1}/x_k\rightarrow1$ and $x_k\rightarrow0$ as $k\rightarrow\infty$. We get
 	\begin{align*}
 	h(x_k)\geq k\cdot h(1)=c^{\frac1{1-\alpha_1}}\cdot x_k^{\frac{\alpha_1}{\alpha_1-1}}\cdot h(1)\geq x_k^{-\frac{\alpha_1}{1-\alpha_1}}\cdot h(1).
 	\end{align*}
  Since for all $k\in\nat$ we have
 	$$
  g(1)\geq\left[\Prob\left(\sup_{0\leq t\leq 1}\|X(t)\|<k^{\frac1{\alpha_1}-1}c^{-\frac2{\alpha_1}}\right)\right]^k,
 	$$
 	there exists a $k_0\in\nat$ such that the right-hand-side is strictly positive for all $k\geq k_0$, i.e.\ $g(1)>0$. Hence, there exists a finite constant $K>0$ such that $h(1)=\log g(1)\geq -K$. Since $h$ is non-increasing, there is a $\lambda_0>0$ such that for $0<\lambda\leq \lambda_0$ with $x_{k+1}\leq\lambda<x_k$ and $k\geq k_0$ we have
 	\begin{align*}
 	h(\lambda)&\geq h(x_{k+1})=(x_{k+1})^{-\frac{\alpha_1}{1-\alpha_1}}\cdot h(1)\geq -K\cdot (x_{k+1})^{-\frac{\alpha_1}{1-\alpha_1}}\\
 	&=-K\cdot \left(\frac{x_{k+1}}{x_k}\right)^{-\frac{\alpha_1}{1-\alpha_1}}\cdot(x_{k})^{-\frac{\alpha_1}{1-\alpha_1}}\geq -K_{11}\;\lambda^{-\frac{\alpha_1}{1-\alpha_1}}.
 	\end{align*}
	Altogether we arrive at
 	$$
 	g(\lambda)\geq\exp\left(-K_{11}\;\lambda^{-\frac{\alpha_1}{1-\alpha_1}}\right)
 	$$ 	
 	for all $0<\lambda<\lambda_0$.
 \end{proof}
 
 \begin{lemma}\label{TailBehav}
Let $X$ be a $(c^E,c)$-operator semistable L\'evy process on $\rr^{d}$ with diagonal principal exponent $E_1$.
\begin{itemize}
	\item[(i)] For the principal component $j=1$ there exists a constant $K_{12}>0$ such that for all $i\in\mathbb{Z}$ and all $a>0$ we have
 	\begin{equation}\label{x1tail}
 	\Prob\left(\|X^{(1)}(c^{-i})\|>a c^{-\frac{i}{\alpha_1}}\right)=\Prob(\|X^{(1)}(1)\|>a) \leq K_{12}\;a^{-\alpha_1}.
 	\end{equation}
	\item[(ii)] For all other components $j=2,\ldots,p$ and arbitrary $\delta'>0$, $\delta_j\in(0,\alpha_j^{-1})$  there exists a constant $K_{j2}>0$ such that for all $i\in\mathbb{Z}$ and all $a\geq a_0\geq1$ we have
 	\begin{equation}\label{xjtail}
 	\Prob\left(\|X^{(j)}(c^{-i})\|>a c^{-i(\frac{1}{\alpha_j}-\delta_j)}\right)\leq K_{j2}\;a^{-(\alpha_j-\delta')}.
 	\end{equation}
\end{itemize}
 \end{lemma}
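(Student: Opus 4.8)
The plan is to reduce everything to tail estimates for the fixed random variables $X^{(j)}(1)$ and then to invoke the (operator) semistable scaling. For part (i), the equality $\Prob(\|X^{(1)}(c^{-i})\|>ac^{-i/\alpha_1})=\Prob(\|X^{(1)}(1)\|>a)$ is immediate from \eqref{opsem} applied to the component process $X^{(1)}$ together with $E_1=\alpha_1^{-1}I^{d_1}$, since then $c^{-iE_1}=c^{-i/\alpha_1}I^{d_1}$ is a scalar and hence commutes with the norm: $\|X^{(1)}(c^{-i})\|\eqd\|c^{-iE_1}X^{(1)}(1)\|=c^{-i/\alpha_1}\|X^{(1)}(1)\|$. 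It therefore remains only to bound $\Prob(\|X^{(1)}(1)\|>a)\le K_{12}a^{-\alpha_1}$ for all $a>0$. For large $a$ this is the standard tail behaviour of a (semi)stable law: the radial part of the L\'evy measure of $X^{(1)}$ has an R-O varying tail of index $-\alpha_1$ by Corollary 7.4.4 in \cite{MS} (as already used in \eqref{Lmtail}), and the distributional tail inherits the same order, uniformly bounded by $c^{\alpha_1}$ times the pure power $a^{-\alpha_1}$ because of the periodicity estimate \eqref{Lmperb}. For $a$ bounded away from zero and infinity the bound is trivial after possibly enlarging $K_{12}$ (note $a^{-\alpha_1}$ is bounded below on compact subsets of $(0,\infty)$), and for small $a$ the probability is $\le 1\le K_{12}a^{-\alpha_1}$ once $K_{12}\ge a_{\min}^{\alpha_1}$; so the estimate holds for \emph{all} $a>0$ as claimed.

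For part (ii) the exponent $E_j$ need not be diagonal, so the scaling picks up the full operator norm $\|c^{-iE_j}\|$ rather than a clean power of $c$. Here the key input is the standard spectral estimate for exponentials of operators whose eigenvalues all have real part $a_j=\alpha_j^{-1}$: for every $\varepsilon>0$ there is $C_\varepsilon\ge1$ with
$$
\|t^{E_j}\|\le C_\varepsilon\,t^{a_j-\varepsilon}\quad(0<t\le1),\qquad \|t^{-E_j}\|\le C_\varepsilon\,t^{-a_j-\varepsilon}\quad(0<t\le1),
$$
which follows, e.g., from Lemma 6.1.5 in \cite{MS} (the logarithmic correction terms $(\log(1/t))^{d_j-1}$ are absorbed into the $\varepsilon$). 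Applying $\|X^{(j)}(c^{-i})\|\eqd\|c^{-iE_j}X^{(j)}(1)\|\le\|c^{-iE_j}\|\cdot\|X^{(j)}(1)\|$ for $i\ge0$ and the symmetric bound $\|X^{(j)}(c^{-i})\|=\|c^{iE_j}X^{(j)}(c^{-2i}\cdot c^{i})\|$... more cleanly, writing $i=-m$ with $m\ge0$ and using $\|c^{mE_j}\|\le C_\varepsilon c^{m(a_j+\varepsilon)}$, one reduces \eqref{xjtail} in both signs of $i$ to a bound of the form $\Prob(\|X^{(j)}(1)\|> K a^{\,\alpha_j\delta_j\,/\,\text{something}})$; choosing $\varepsilon$ small enough in terms of the gap between $\delta_j$ and $\alpha_j^{-1}$ and between $\delta'$ and $0$ guarantees that the power of $c^{-i}$ one gains, namely $c^{-i(1/\alpha_j-\delta_j)}$, dominates $\|c^{-iE_j}\|$ up to a constant. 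It then again suffices to control the fixed tail $\Prob(\|X^{(j)}(1)\|>b)\le K b^{-(\alpha_j-\delta')}$ for $b\ge b_0$, which holds because $X^{(j)}(1)$ is a full operator-semistable law on $V_j$ with all real parts of eigenvalues equal to $a_j$, so its radial tail is R-O varying of index $-\alpha_j$ by Corollary 7.4.4 in \cite{MS} and in particular is $O(b^{-(\alpha_j-\delta')})$; the restriction $a\ge a_0\ge1$ in the statement is exactly what lets us stay in the regime $b\ge b_0$ where this holds.

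The main obstacle is the bookkeeping in part (ii): one must keep careful track of the two independent slack parameters $\delta_j$ (space scaling) and $\delta'$ (tail exponent) and the auxiliary $\varepsilon$ coming from the operator-norm estimate, and verify that a single choice of $\varepsilon=\varepsilon(\delta_j,\delta')$ makes the inequality hold simultaneously for all $i\in\ganz$ — the cases $i\ge0$ and $i<0$ needing the two directions of the spectral bound. Once the correct chain of inequalities
$$
\Prob\!\left(\|X^{(j)}(c^{-i})\|>ac^{-i(1/\alpha_j-\delta_j)}\right)\le\Prob\!\left(\|X^{(j)}(1)\|>K a\right)\le K_{j2}\,a^{-(\alpha_j-\delta')}
$$
is set up, everything else is the routine R-O variation tail bound already exploited in Section 2.4.
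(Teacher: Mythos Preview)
Your overall architecture is correct and the final chain of inequalities you write for part (ii) is exactly what the paper establishes. The reduction in (i) via $c^{-iE_1}=c^{-i/\alpha_1}I^{d_1}$ and in (ii) via the operator-norm bound $\|c^{-iE_j}\|\le K\,c^{-i(1/\alpha_j-\delta_j)}$ (the paper cites this as Lemma~2.1 in \cite{KW}, which packages both signs of $i$ into one estimate and spares you the awkward case split you start and abandon) is the same. Where you diverge from the paper is in the last step, namely the tail bound for the \emph{fixed} law $X^{(j)}(1)$.

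The paper does \emph{not} argue directly about the radial tail of $X^{(j)}(1)$. Instead it projects onto coordinate directions: it writes $X_k=\langle X^{(j)}(1),e_k\rangle$, observes that each $X_k$ is a one-dimensional $(c^{1/\alpha_1},c)$-semistable law (for $j=1$) respectively a marginal of an operator-semistable vector (for $j\ge2$), and then invokes a ready-made one-dimensional tail result --- the remark in \cite{MS2} for part (i), Theorem~6.3.25(a) in \cite{MS} for part (ii) --- together with the elementary $\{\|X^{(j)}(1)\|>a\}\subseteq\bigcup_k\{|X_k|>a/(d_jK)\}$. This sidesteps any discussion of R-O variation of the \emph{distributional} tail.

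Your route, by contrast, appeals to Corollary~7.4.4 in \cite{MS}, but that result (as used in \eqref{Lmtail}) describes the tail of the \emph{L\'evy measure}, not of the distribution of $X^{(j)}(1)$. The sentence ``the distributional tail inherits the same order'' is where the real work hides; it is true, but it is not free, and the reference you give does not supply it. Either you need to add the standard argument that for an infinitely divisible law without Gaussian part the distribution tail and the L\'evy-measure tail are comparable at infinity, or --- simpler --- you should switch to the marginal-projection trick and cite a result that speaks directly about distribution tails, as the paper does. Once that is fixed, your proof is complete and equivalent to the paper's.
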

 
 \begin{proof}
 	(i) Let $\nu$ be a $(c^{1/\alpha_1},c)$-semistable law on $\rr$. One can show (see the Remarks in section 3 of \cite{MS2}) that for all $t>0$
 	\begin{equation}\label{SemistableLaw}
 	\nu(\{|x|>t\})=t^{-\alpha}f(t),
 	\end{equation}
 	where $f$ is a bounded, asymptotically log-periodic function. Let $X_1,\ldots,X_{d_1}$ denote the marginals 	of $X^{(1)}(1)$, i.e.\ $X_j=\langle X^{(1)}(1),e_j\rangle$ with canonical basis vector $e_j$, and let $X_j(t)=\langle X^{(1)}(t),e_j\rangle$ be the L\'evy process generated by $X_j$. Since
	$$X_j(ct)=\langle X^{(1)}(ct),e_j\rangle=\langle c^{E_1}X^{(1)}(t),e_j\rangle=c^{1/\alpha_1}\langle X^{(1)}(t),e_j\rangle=c^{1/\alpha_1}X_j(t)$$
	the distributions of the marginals $X_j$ are $(c^{1/\alpha_1},c)$-semistable on $\rr$. Hence, by \eqref{SemistableLaw} there exists a finite constant $C_1>0$ such that
 	\begin{align*}
 	\Prob\left(|X_j|>a\right)\leq C_1a^{-\alpha_1} \quad \text{ for all } \quad a>0,\; j = 1,\ldots,d_1.
 	\end{align*}
 	Since $\|X^{(1)}(1)\|\leq K\cdot \|X^{(1)}(1)\|_1 = K\cdot\sum_{j=1}^{d_1}|X_j|$,
 	we further get
 	\begin{align*}
 		\Prob\left(\|X^{(1)}(1)\|>a\right) & \leq \Prob\left(\sum_{j=1}^{d_1}|X_j|>\frac{a}{K}\right)\leq\Prob\left(\bigcup_{j=1}^{d_1}\left\{|X_j|>\frac{a}{d_1 K}\right\}\right)\\
 		&\leq \sum_{j=1}^{d_1}\Prob\left(|X_j|>\frac{a}{d_1 K}\right)\leq C_1\left(\frac{a}{d_1 K}\right)^{-\alpha_1} =: K_{12}\;a^{-\alpha_1},
 	\end{align*}
 	which concludes the proof of (i).
	
(ii) By Lemma 2.1 in \cite{KW} we have for any $r\in[1,c)$
\begin{equation}\begin{split}\label{xjdec}
\|X^{(j)}(rc^{-i})\| & =\|c^{-iE_j}X^{(j)}(r)\|\leq\|c^{-iE_j}\|\|X^{(j)}(r)\|\\
& \leq K\cdot c^{-i(a_j-\delta_j)}\|X^{(j)}(r)\|=K\cdot c^{-i(\frac1{\alpha_j}-\delta_j)}\|X^{(j)}(r)\|
\end{split}\end{equation}
and hence we get
\begin{equation}\label{xjtail1}
\Prob\left(\|X^{(j)}(c^{-i})\|>a c^{-i(\frac{1}{\alpha_j}-\delta_j)}\right)\leq\Prob\left(\|X^{(j)}(1)\|>K^{-1}a\right).
\end{equation}
As in part (i), for the marginals $X_1,\ldots,X_{d_j}$ of $X^{(j)}(1)$, i.e.\ $X_k=\langle X^{(j)}(1),e_k\rangle$ with canonical basis vector $e_k$, we get
\begin{equation}\label{xjtail2}
\Prob\left(\|X^{(j)}(1)\|>K^{-1}a\right)\leq \sum_{k=1}^{d_j}\Prob\left(|X_k|>C_2 a\right).
\end{equation}
In view of Theorem 8.2.1 in \cite{MS}, an application of Theorem 6.3.25(a) in \cite{MS} gives
\begin{equation}\label{xjtail3}
\Prob\left(|X_k|>C_2 a\right)=\Prob\left(|\langle X^{(j)}(1),e_k\rangle|>C_2 a\right)\leq C_{2k}a^{-\frac1{a_j}+\delta'}=C_{2k}a^{-(\alpha_j-\delta')}
\end{equation}
for all $a\geq a_0$ and some $a_0\geq 1$ independent of $k=1,\ldots,d_j$. Now, \eqref{xjtail} follows directly from \eqref{xjtail1}--\eqref{xjtail3}.
\end{proof}
 
 \begin{lemma}\label{LemmaA2A}
Let $X$ be a $(c^E,c)$-operator semistable L\'evy process on $\rr^{d}$ with diagonal principal exponent $E_1$. Given $\varepsilon\in(0,1)$, $\delta_1:=0$ and $\delta_j\in(0,\alpha_j^{-1})$ for $j=2,\ldots,p$, there exists a constant $a_0>0$ such that for all $a\geq a_0$ and all $i\in\nat_0$ we have
 	$$
 	\sup_{t\in[0,c^{-i}]}\Prob\left(\|X^{(j)}(t)\|>ac^{-i(\frac{1}{\alpha_j}-\delta_j)}\right)\leq\varepsilon<1.
 	$$
 \end{lemma}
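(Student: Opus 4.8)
The plan is to reduce the estimate, via the operator semistable scaling of the component process $X^{(j)}$, to a single tail bound that is uniform in time on the compact interval $[0,1]$, and then to invoke the elementary fact that a c\`adl\`ag path is bounded on a compact interval. I would fix $j\in\{1,\ldots,p\}$ throughout and, at the end, take $a_0$ to be the maximum of the finitely many constants obtained for $j=1,\ldots,p$.

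First I would rescale in time. For $i\in\nat_0$ and $t\in[0,c^{-i}]$ write $t=c^{-i}s$ with $s:=c^it\in[0,1]$. Since $X^{(j)}$ is $(c^{E_j},c)$-operator semistable (see Section \ref{subsecSpecDec}), iterating its defining scaling relation gives $X^{(j)}(s)=X^{(j)}(c^it)\eqd c^{iE_j}X^{(j)}(t)$, hence $X^{(j)}(t)\eqd c^{-iE_j}X^{(j)}(s)$. The operator norm satisfies $\|c^{-iE_j}\|\le\kappa_j\,c^{-i(1/\alpha_j-\delta_j)}$ for all $i\in\nat_0$: for $j=1$ this is the equality $\|c^{-iE_1}\|=c^{-i/\alpha_1}$ with $\kappa_1:=1$ and $\delta_1=0$, since $E_1=\alpha_1^{-1}\cdot I^{d_1}$; for $j\ge2$ it is Lemma 2.1 in \cite{KW}, a standard consequence of the Jordan form in which the polynomial corrections are absorbed into $c^{i\delta_j}$. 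Using submultiplicativity $\|c^{-iE_j}x\|\le\|c^{-iE_j}\|\,\|x\|$ I therefore obtain
$$\Prob\big(\|X^{(j)}(t)\|>a\,c^{-i(1/\alpha_j-\delta_j)}\big)\le\Prob\big(\|X^{(j)}(s)\|>a/\kappa_j\big),$$
and since $t\mapsto s=c^it$ is a bijection of $[0,c^{-i}]$ onto $[0,1]$, taking suprema gives the $i$-independent bound
$$\sup_{t\in[0,c^{-i}]}\Prob\big(\|X^{(j)}(t)\|>a\,c^{-i(1/\alpha_j-\delta_j)}\big)\le\sup_{s\in[0,1]}\Prob\big(\|X^{(j)}(s)\|>a/\kappa_j\big)\le\Prob(M_j>a/\kappa_j),$$
where $M_j:=\sup_{s\in[0,1]}\|X^{(j)}(s)\|$.

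It then remains to make $\Prob(M_j>a/\kappa_j)\le\varepsilon$ for all sufficiently large $a$. Since $X^{(j)}$ is a L\'evy process, its sample paths are c\`adl\`ag and hence bounded on $[0,1]$, so $M_j<\infty$ almost surely and thus $\Prob(M_j>b)\downarrow0$ as $b\to\infty$. I would choose $b_0>0$ with $\Prob(M_j>b_0)\le\varepsilon$ and set $a_0:=\kappa_j b_0$ (enlarging $a_0$ to be at least $1$ if convenient); then every $a\ge a_0$ satisfies $\Prob(M_j>a/\kappa_j)\le\Prob(M_j>b_0)\le\varepsilon$, which is the claim.

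I do not expect a genuine obstacle here: the whole content is the scaling reduction to the unit interval together with the boundedness of c\`adl\`ag paths on compacts. The only points needing a little care are the semistable scaling of the block $X^{(j)}$ and the operator norm estimate $\|c^{-iE_j}\|\le\kappa_j\,c^{-i(1/\alpha_j-\delta_j)}$ for the possibly non-diagonal $E_j$ with $j\ge2$, which is exactly Lemma 2.1 in \cite{KW}. One could alternatively try to control $\sup_{s\in[0,1]}\Prob(\|X^{(j)}(s)\|>b)$ by a further $c$-adic decomposition of $[0,1]$ together with Lemma \ref{TailBehav}, but that would still require estimating the process at intermediate times, so the route through $M_j$ above is the most economical.
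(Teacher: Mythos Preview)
Your argument is correct and follows essentially the same strategy as the paper: use the operator semistable scaling together with the norm bound $\|c^{-iE_j}\|\le K\,c^{-i(1/\alpha_j-\delta_j)}$ (Lemma~2.1 in \cite{KW}, which is exactly what the paper invokes via \eqref{xjdec}) to reduce the supremum over $t\in[0,c^{-i}]$ to a uniform tail estimate on a fixed compact time interval, independent of $i$.

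The only genuine difference is in how the final uniform tail bound is obtained. The paper writes $t=rc^{-k}$ with $r\in[1,c)$ and $k\ge i$, arriving at $\sup_{r\in[1,c)}\Prob(\|X^{(j)}(r)\|>a/K)$, and then appeals to stochastic continuity of $(X^{(j)}(r))_{r\in[1,c)}$, weak relative compactness, and Prohorov's theorem to conclude tightness. You instead parameterize by $s=c^it\in[0,1]$ and dominate $\sup_{s\in[0,1]}\Prob(\|X^{(j)}(s)\|>b)$ by $\Prob(M_j>b)$ with $M_j=\sup_{s\in[0,1]}\|X^{(j)}(s)\|$, finishing with the almost-sure finiteness of $M_j$ from c\`adl\`ag paths. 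Your route is slightly more elementary, as it replaces the Prohorov argument by the trivial pathwise bound; the paper's route, on the other hand, avoids introducing the maximal process and keeps the estimate at the level of one-dimensional marginals. Either way the content is the same scaling reduction.
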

 
 \begin{proof}
  Using \eqref{xjdec} in case $j=2,\ldots,p$ and the semistability in case $j=1$, we get
 	\begin{align*}
 	&\sup_{t\in[0,c^{-i}]}\Prob\left(\|X^{(j)}(t)\|>ac^{-i(\frac{1}{\alpha_j}-\delta_j)}\right)
	\leq\sup_{r\in[1,c)}\sup_{k\geq i} \;\Prob\left(\|X^{(j)}(rc^{-k})\|>ac^{-i(\frac{1}{\alpha_j}-\delta_j)}\right)\\
 	&\quad\leq\sup_{r\in[1,c)}\sup_{k\geq i} \;\Prob\left(K\cdot c^{-k(\frac{1}{\alpha_j}-\delta_j)}\|X^{(j)}(r)\|>ac^{-i(\frac{1}{\alpha_j}-\delta_j)}\right)\\
 	&\quad= \sup_{r\in[1,c)}\;\Prob\left(\|X^{(j)}(r)\|>\frac{a}{K}\right) .
 	\end{align*}
 	Since $\left(X^{(j)}(r)\right)_{r\in[1,c)}$ is stochastically continuous and hence weakly relatively compact, it follows by Prohorov's theorem that for $\varepsilon\in(0,1)$ there exists $a_0>0$ such that for all $a\geq a_0$ we have
 	$$
 	\sup_{r\in[1,c)}\;\Prob\left(\|X^{(j)}(r)\|>\frac{a}{K}\right)\leq \varepsilon <1,
 	$$
  concluding the proof.
 \end{proof}
 
 \begin{lemma}\label{LemmaA2}
Let $X$ be a $(c^E,c)$-operator semistable L\'evy process on $\rr^{d}$ with diagonal principal exponent $E_1$. Given $\varepsilon\in(0,1)$, $\delta_1:=0$ and $\delta_j\in(0,\alpha_j^{-1})$ for $j=2,\ldots,p$, there exists a constant $a_0>0$ such that for all $a\geq a_0$ and all $i\in\nat_0$ we have
 	$$
 	\Prob\left(\sup_{t\in[0,c^{-i}]}\|X^{(j)}(t)\|>2ac^{-i(\frac{1}{\alpha_j}-\delta_j)}\right)\leq\frac{1}{1-\varepsilon}\cdot\Prob\left(\|X^{(j)}(c^{-i})\|>ac^{-i(\frac{1}{\alpha_j}-\delta_j)}\right).
 	$$
 \end{lemma}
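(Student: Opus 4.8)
The plan is to run the classical Ottaviani-type maximal inequality for the L\'evy process $X^{(j)}$, using Lemma~\ref{LemmaA2A} to control the overshoot increment. Fix $j\in\{1,\ldots,p\}$ and $i\in\nat_0$, and abbreviate $b:=a\,c^{-i(\frac1{\alpha_j}-\delta_j)}>0$ and $M:=\sup_{t\in[0,c^{-i}]}\|X^{(j)}(t)\|$. Introduce the stopping time
$$\tau:=\inf\left\{t\geq0:\|X^{(j)}(t)\|>2b\right\}$$
with respect to the (right-continuous, augmented) natural filtration $(\mathcal F_t)$ of $X^{(j)}$, with $\inf\emptyset=\infty$. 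Since $\sup_{t\in[0,c^{-i}]}\|X^{(j)}(t)\|>2b$ forces $\|X^{(j)}(t^\ast)\|>2b$ for some $t^\ast\in[0,c^{-i}]$, we have the inclusion $\{M>2b\}\subseteq\{\tau\leq c^{-i}\}$; moreover, since $X^{(j)}$ has c\`adl\`ag paths, right-continuity of $t\mapsto\|X^{(j)}(t)\|$ at $\tau$ gives $\|X^{(j)}(\tau)\|\geq2b$ on $\{\tau\leq c^{-i}\}$. So it suffices to bound $\Prob(\tau\leq c^{-i})$.

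Next I would split $\{\tau\leq c^{-i}\}$ according to whether $\|X^{(j)}(c^{-i})\|>b$. On the event $\{\tau\leq c^{-i}\}\cap\{\|X^{(j)}(c^{-i})\|\leq b\}$ the triangle inequality yields $\|X^{(j)}(c^{-i})-X^{(j)}(\tau)\|\geq\|X^{(j)}(\tau)\|-\|X^{(j)}(c^{-i})\|\geq2b-b=b$, hence
$$\Prob(\tau\leq c^{-i})\leq\Prob\big(\|X^{(j)}(c^{-i})\|>b\big)+\Prob\big(\tau\leq c^{-i},\ \|X^{(j)}(c^{-i})-X^{(j)}(\tau)\|\geq b\big).$$
Because $X^{(j)}$ is a L\'evy process it is strong Markov, so conditionally on $\mathcal F_\tau$ the post-$\tau$ increment process is an independent copy of $X^{(j)}$; consequently, on $\{\tau\leq c^{-i}\}$, $X^{(j)}(c^{-i})-X^{(j)}(\tau)$ is distributed like $X^{(j)}(c^{-i}-\tau)$ and independent of $\mathcal F_\tau$. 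Therefore the last probability equals $\Exp\big[1_{\{\tau\leq c^{-i}\}}\,g(\tau)\big]$ with $g(s)=\Prob\big(\|X^{(j)}(c^{-i}-s)\|\geq b\big)$, and is thus at most $\Prob(\tau\leq c^{-i})\cdot\sup_{u\in[0,c^{-i}]}\Prob\big(\|X^{(j)}(u)\|\geq b\big)$, where I substituted $u=c^{-i}-s$.

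Finally I would feed in Lemma~\ref{LemmaA2A}. After replacing its constant $a_0$ by $2a_0$ (so that $a\geq a_0$ here forces $a/2\geq a_0^{\mathrm{old}}$), the inclusion $\{\|X^{(j)}(u)\|\geq b\}\subseteq\{\|X^{(j)}(u)\|>\tfrac a2\,c^{-i(\frac1{\alpha_j}-\delta_j)}\}$ gives $\sup_{u\in[0,c^{-i}]}\Prob\big(\|X^{(j)}(u)\|\geq b\big)\leq\varepsilon$. Combining the displays yields $\Prob(\tau\leq c^{-i})\leq\Prob\big(\|X^{(j)}(c^{-i})\|>b\big)+\varepsilon\,\Prob(\tau\leq c^{-i})$, i.e.\ $\Prob(\tau\leq c^{-i})\leq(1-\varepsilon)^{-1}\Prob\big(\|X^{(j)}(c^{-i})\|>b\big)$, and the inclusion $\{M>2b\}\subseteq\{\tau\leq c^{-i}\}$ completes the proof. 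I do not expect a genuine obstacle here; the only points needing care are the c\`adl\`ag bookkeeping at $\tau$ (identifying the supremum event with a hitting-time event and locating $\|X^{(j)}(\tau)\|\geq2b$) and the harmless passage from the strict inequality of Lemma~\ref{LemmaA2A} to the non-strict one produced by the triangle-inequality step.
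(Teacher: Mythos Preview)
Your proof is correct and follows the same Ottaviani-type strategy as the paper, with Lemma~\ref{LemmaA2A} supplying the uniform bound on the ``overshoot'' increment. The only difference is in the execution: the paper discretises the interval $[0,c^{-i}]$ into $N$ equal pieces, applies the classical L\'evy--Ottaviani inequality for independent summands (Breiman, Lemma~3.21), and then lets $N\to\infty$ using right-continuity of paths; you instead work directly in continuous time via the stopping time $\tau$ and the strong Markov property. Your route is slightly more direct (no limiting argument), at the cost of invoking the strong Markov property and handling the strict/non-strict inequality by halving $a$ and doubling $a_0$; the paper's discretisation avoids both of these bookkeeping points since the discrete Ottaviani inequality already matches the strict inequality in Lemma~\ref{LemmaA2A}. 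Either way the conclusion and the constants are the same.
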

 
 \begin{proof}
 	For $N\in\nat$ and $n=1,\ldots,N$ define $Y_{n,N}:=X^{(j)}(k_n^N)-X^{(j)}(k_{n-1}^N)$, where $k_n^{N}:=\frac{n}{N}\,c^{-i}$. Then $Y_{1,N},\ldots,Y_{N,N}$ are independent and
 	$
 	\sum_{k=1}^n Y_{k,N} = X^{(j)}(k_n^N).
 	$
 	By Lemma \ref{LemmaA2A} for any $\varepsilon\in(0,1)$ there exists a constant $a_0>0$ such that for all $a\geq a_0$ we have
 	\begin{align*}
 	&\sup_{0\leq n\leq N}\Prob\left(\|\sum_{k=1}^N Y_{k,N} - \sum_{k=1}^n Y_{k,N}\|>ac^{-i(\frac{1}{\alpha_j}-\delta_j)}\right) \\
 	&\quad= \sup_{0\leq n\leq N}\Prob\left(\|X^{(j)}(k_N^N) - X^{(j)}(k_n^N)\|>ac^{-i(\frac{1}{\alpha_j}-\delta_j)}\right)\\
 	&\quad= \sup_{0\leq n\leq N}\Prob\left(\|X^{(j)}(k_N^N-k_n^N)\|>ac^{-i(\frac{1}{\alpha_j}-\delta_j)}\right)\\
 	&\quad= \sup_{0\leq n\leq N}\Prob\left(\|X^{(j)}\left(\left(1-\tfrac{n}{N}\right)c^{-i}\right)\|>ac^{-i(\frac{1}{\alpha_j}-\delta_j)}\right)\\
 	&\quad\leq\sup_{t\in[0,c^{-i}]}\Prob\left(\|X^{(j)}(t)\|>ac^{-i(\frac{1}{\alpha_j}-\delta_j)}\right)\leq \varepsilon <1.
 	\end{align*}
 	Using the L\'evy-Ottaviani inequality (see Lemma 3.21 in \cite{B}) and the fact that $(X^{(j)}(t))_{t\geq0}$ has right-continuous paths, it follows that
 	\begin{align*}
 	&\Prob\left(\sup_{t\in[0,c^{-i}]}\|X^{(j)}(t)\|>2ac^{-i(\frac{1}{\alpha_j}-\delta_j)}\right)\\
 	&\quad= \lim_{N\rightarrow\infty} \Prob\left(\sup_{0\leq n\leq N}\|X^{(j)}(k_n^N)\|>2ac^{-i(\frac{1}{\alpha_j}-\delta_j)}\right)\\
 	&\quad\leq \lim_{N\rightarrow\infty}\;\frac1{1-\varepsilon}\;\Prob\left(\|X^{(j)}(k_N^N)\|>ac^{-i(\frac{1}{\alpha_j}-\delta_j)}\right)\\
  	&\quad= \frac1{1-\varepsilon}\;\Prob\left(\|X^{(j)}(c^{-i})\|>ac^{-i(\frac{1}{\alpha_j}-\delta_j)}\right),
 	\end{align*}
 	concluding the proof.
 \end{proof}

% --------------- Upper Bound P(a) ---------------------

For a L\'evy process $X=\{X(t):t\geq0\}$ define the first exit time from the closed ball $B(0,a)$
	$$
	P(a)=\inf\{t\geq0:\|X(t)\|>a\};
	$$
	and the maximum displacement process for $t>0$ as
	$$
	M(t)=\sup_{0\leq s\leq t}\|X(s)\|.
	$$
Note that for $a, r>0$ the first exit time $P(a)$ and the maximum displacement process $M(r)$ are related by
\begin{equation}
	\{P(a)<r\} = \{M(r)>a\}.
\end{equation}

 \begin{lemma}\label{UpperBoundPa}
Let $X$ be a $(c^E,c)$-operator semistable L\'evy process on $\rr^{d}$ with diagonal principal exponent $E_1$. Then for
 	\begin{align}\label{phi}
 	\phi(a)=\left\{\begin{array}{ll} a^{\alpha_1}\log\log\frac1a, & \text{if $X$ is of type $A$ and $0<\alpha_1<d_1$}  \\
 	\\
 	a^{\alpha_1}\left(\log\log\frac1a\right)^{1-\alpha_1}, & \text{if $X$ is of type $B$ and } 0<\alpha_1<1\end{array}\right.
 	\end{align}
 	there exist constants $K_{13},K_{14},\gamma_0>0$ such that
 	\begin{align}
 	\Prob\left(\sup_{\gamma\leq a\leq\delta}\frac{P(a)}{\phi(a)}<K_{13}\right) \leq \exp\left(-K_{14}\cdot\left(-\log\gamma\right)^{\frac18}\right),
 	\end{align}
 	for all $0<\gamma\leq\gamma_0$ and $\delta\geq\gamma^{1/6}$.
 \end{lemma}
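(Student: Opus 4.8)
The plan is to show that on the event $A:=\{\sup_{\gamma\le a\le\delta}P(a)/\phi(a)<K_{13}\}$ the sample path never "dwells", for a time of order $\phi(a)$, in a ball of radius comparable to $a$, at any scale $a\in[\gamma,\delta]$; since such dwelling is reasonably likely at each scale and becomes essentially independent over sufficiently separated scales, its total absence over the $\asymp-\log\gamma$ scales in $[\gamma,\delta]$ is superpolynomially improbable. We may and do assume $\delta\le\delta_0$. Fix $K_{13}>0$ small (to be constrained below), put $G:=\lceil(-\log\gamma)^{1/8}\rceil$, and choose scales $b_1<\cdots<b_N$ of the form $b_l=c^{-m_l}$ with $m_l$ a decreasing arithmetic progression of step $G$ and $2b_l\in[\gamma,\delta]$ for all $l$; since $\delta\ge\gamma^{1/6}$ one gets $N\ge c_0(-\log\gamma)^{7/8}$ for $\gamma$ small. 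Write $\sigma_l:=P(b_l)$, $\zeta_l:=\sigma_l+K_{13}\phi(2b_l)$ (a stopping time), $\zeta_0:=0$; then $\sigma_1\le\cdots\le\sigma_N$ and $\zeta_1\le\cdots\le\zeta_N$, so $\mathcal A_l:=\mathcal F_{\zeta_l}$ is increasing in $l$. For each $l$ set
\[
S_l:=\bigl\{\|X(\sigma_l)\|\le\tfrac32 b_l\bigr\}\cap\bigl\{\sup_{\sigma_l\le t\le\zeta_l}\|X(t)-X(\sigma_l)\|\le\tfrac12 b_l\bigr\}\in\mathcal A_l.
\]

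The deterministic core of the argument is that $S_l\subseteq A^c$: on $S_l$ we have $\|X(t)\|\le b_l$ for $t<\sigma_l$ and $\|X(t)\|\le\|X(\sigma_l)\|+\tfrac12 b_l\le 2b_l$ for $\sigma_l\le t\le\zeta_l$, hence $P(2b_l)\ge\zeta_l\ge K_{13}\phi(2b_l)$, i.e.\ $\sup_{\gamma\le a\le\delta}P(a)/\phi(a)\ge K_{13}$. Thus $A\subseteq\bigcap_{l=1}^N S_l^c$, and I would intersect with the "no-overtaking" event $\mathcal G:=\bigcap_{l=1}^N\{M(\zeta_{l-1})\le b_l\}$. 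On $\{M(\zeta_{l-1})\le b_l\}$ the process has not left $B(0,b_l)$ by time $\zeta_{l-1}$, so by the strong Markov property at $\zeta_{l-1}$ the data defining $S_l$ are governed by a fresh copy of $X$ issued from a point of $B(0,b_l)$; combining a scale-invariant, uniform-in-starting-point lower bound $p_0>0$ for exiting $B(0,b_l)$ with overshoot at most $\tfrac12 b_l$ (valid by semi-selfsimilarity, the exit-overshoot ratio having a scale-free law) with Lemma \ref{lemmaA2} applied to the ensuing confinement over a time $K_{13}\phi(2b_l)$ to a ball of radius $\tfrac12 b_l$, one obtains, on that event, $\Prob(S_l\mid\mathcal A_{l-1})\ge p_l:=p_0\,\bigl(\log(1/(2b_l))\bigr)^{-\beta}$, where $\beta>0$ depends only on $\alpha_1$, $K_{13}$ and $K_{10}$ (type $A$) or $K_{11}$ (type $B$), and $\beta\downarrow0$ as $K_{13}\downarrow0$. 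Peeling the attempts off from $l=N$ downward along $(\mathcal A_l)$ then gives $\Prob(\bigcap_l S_l^c\cap\mathcal G)\le\prod_{l=1}^N(1-p_l)\le\exp\bigl(-p_0\sum_l(\log(1/(2b_l)))^{-\beta}\bigr)\le\exp\bigl(-c_1 N(-\log\gamma)^{-\beta}\bigr)\le\exp\bigl(-c_1 c_0(-\log\gamma)^{7/8-\beta}\bigr)$.

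It then remains to control $\Prob(\mathcal G^c)\le\sum_{l=2}^N\Prob(M(\zeta_{l-1})>b_l)$, where $\{M(\zeta_{l-1})>b_l\}=\{P(b_l)<P(b_{l-1})+K_{13}\phi(2b_{l-1})\}$ is the event that, after first reaching norm $b_{l-1}$, the path reaches the far larger norm $b_l=c^G b_{l-1}$ within the short time $K_{13}\phi(2b_{l-1})$. Applying the strong Markov property at $P(b_{l-1})$, splitting on whether the overshoot there already exceeds $\tfrac12 b_l$, and invoking the Pareto-type tail $\Prob(M(t)>\Lambda t^{1/\alpha_1})\le K\Lambda^{-\alpha_1}$ valid for small $t$ and $\Lambda\ge a_0$ (which follows from Lemma \ref{TailBehav} together with Lemma \ref{LemmaA2}), as well as the matching scale-free tail of the exit overshoot, one gets $\Prob(M(\zeta_{l-1})>b_l)\le K\,c^{-G\alpha_1}\log\log(1/\gamma)$, hence $\Prob(\mathcal G^c)\le K N c^{-G\alpha_1}\log\log(1/\gamma)\le K(-\log\gamma)\log\log(1/\gamma)\,\exp\bigl(-\alpha_1(\log c)(-\log\gamma)^{1/8}\bigr)$, which for small $\gamma$ is at most $\exp\bigl(-\tfrac12\alpha_1(\log c)(-\log\gamma)^{1/8}\bigr)$. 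Choosing $K_{13}$ small enough that $\beta\le\tfrac34$ (so $7/8-\beta\ge\tfrac18$) and adding the two bounds yields $\Prob(A)\le\exp\bigl(-K_{14}(-\log\gamma)^{1/8}\bigr)$ for $\gamma\le\gamma_0$. The distinction between type $A$ and type $B$ enters only through which part of Lemma \ref{lemmaA2} is used, and hence only through the value of $\beta$. I expect the main obstacles to be the uniform positive lower bound $p_0$ for a bounded exit overshoot and, above all, the bookkeeping turning the mutually dependent attempts into a genuine product estimate — it is precisely the balance in the choice of $G\asymp(-\log\gamma)^{1/8}$, between the number $N$ of attempts and the probability that consecutive attempts overtake one another, that produces the exponent $\tfrac18$.
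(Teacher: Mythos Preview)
Your overall architecture---many well-separated scales, a dwelling event at each scale, approximate independence via a ``no-overtaking'' event, and a balance that produces the exponent $\tfrac18$---is sound and is in spirit close to the paper. But your implementation is genuinely different from the paper's and carries a real gap: everything hinges on the exit-overshoot claims, and these are neither available in the paper's toolkit nor straightforward for operator semistable processes. Concretely, you need (a) a lower bound $p_0>0$ for $\Prob(\|X(\sigma_l)\|\le\tfrac32 b_l)$ that is \emph{uniform over the starting point} $X(\zeta_{l-1})\in B(0,b_l)$, and (b) a Pareto-type tail for the overshoot at $P(b_{l-1})$. Discrete self-similarity only gives scale-invariance of these quantities along $b=c^{-m}$ for the principal component $X^{(1)}$; for the full process $\|c^{-mE}x\|\neq c^{-m/\alpha_1}\|x\|$ because $E_2,\ldots,E_p$ are not assumed diagonal, so the ``scale-free law'' you invoke does not hold as stated. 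Even restricting to $X^{(1)}$, the uniform-in-$x$ lower bound in (a) requires an Ikeda--Watanabe type analysis of the harmonic measure of the ball that is nowhere established here, and the tail in (b) is likewise unproven. You flag these as ``main obstacles''---they are, and they are not merely bookkeeping.

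The paper sidesteps overshoot entirely by working in the \emph{time} variable through the asymptotic inverse $\psi$ of $\phi$ and the duality $\{P(a)<t\}=\{M(t)>a\}$. One fixes the deterministic sequence $t_k=\exp(-k^2)$ and sets
\[
D_k=\{M(t_k)>2C\psi(t_k)\},\quad G_k=\Bigl\{\sup_{t_{k+1}\le t\le t_k}\|X(t)-X(t_{k+1})\|>C\psi(t_k)\Bigr\},\quad H_k=\{M(t_{k+1})>C\psi(t_k)\},
\]
so that $D_k\subseteq G_k\cup H_k$. The crucial point is that the $G_k$ depend on increments over \emph{disjoint deterministic} intervals and are therefore exactly independent---no strong Markov at random times, no starting-point uniformity, no overshoot. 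Lemma~\ref{lemmaA2} gives $\Prob(G_k^c)\ge k^{-2/3}$, while Lemmas~\ref{TailBehav} and~\ref{LemmaA2} give $\Prob(H_k)\le e^{-Ck}$ directly (this is the analogue of your $\Prob(\mathcal G^c)$ bound but needs only fixed-time tail estimates). Taking $m\asymp\sqrt{-\log\gamma}$ and $k\in\{m+1,\ldots,2m\}$ yields $\Prob(\bigcap_k D_k)\le\exp(-m^{1/4})\le\exp(-K_{14}(-\log\gamma)^{1/8})$, and one then checks that the event in the lemma is contained in $\bigcap_k D_k$. Your balance via $G\asymp(-\log\gamma)^{1/8}$ and $N\asymp(-\log\gamma)^{7/8}$ reaches the same exponent by a different route, which is a nice observation; but to make your argument rigorous you would need to supply the overshoot estimates, whereas the paper's increment-based decomposition needs nothing beyond what is already proved.
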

 
 \begin{proof}
 	First assume that $X$ is of type $A$ and $\alpha_1\in(0,\min\{d_1,2\})$, thus $\phi(a)=a^{\alpha_1}\log\log\frac1a$. By regular variation techniques, it can be shown that for $\alpha_1<d_1$ the function $\psi$, defined by $\psi(s)=s^{1/\alpha_1}(\log\log 1/s)^{-1/\alpha_1}$, is asymptotically inverse to $\phi$ in the sense that
 	\begin{equation}\label{phipsiasym}
 	\phi(\psi(s))\sim s \text{ as } s\to0+ \quad \text{ and } \quad \psi(\phi(a))\sim a \text{ as } a\to0+.
 	\end{equation}
 	Owing to the fact that $\{M(t)>a\}=\{P(a)<t\}$, instead of estimating the probability that $\frac{P(a)}{\phi(a)}$ remains small, we will now estimate the probability that $\frac{M(t)}{\psi(t)}$ remains large. Therefore, define a sequence $t_k=\exp(-k^2)$, $k\geq1$. Then for all $k$ there exists an $i_k\in\nat_0$ such that $c^{-(i_k+1)}\leq t_k< c^{-i_k}$. Define 
 	\begin{equation*}
 	M'(t_k)=\sup_{t_{k+1}\leq t\leq t_k}\|X(t)-X(t_{k+1})\|
 	\end{equation*}
 	and $C_4:=(3K_{10})^{1/\alpha_1}$, where $K_{10}$ is the constant in Lemma \ref{lemmaA2} (a). Furthermore, define
 	\begin{equation*}
 	D_k:=\left\{\frac{M(t_k)}{\psi(t_k)}>2\cdot C_4\right\},\; G_k:= \left\{\frac{M'(t_k)}{\psi(t_k)}> C_4\right\} \text{ and } H_k:= \left\{\frac{M(t_{k+1})}{\psi(t_k)}> C_4\right\}.
 	\end{equation*}
 	Then
 	\begin{align*}
 	D_k&=\left\{\sup_{0\leq t\leq t_k}\|X(t)\|>2C_4\psi(t_k)\right\}\\
 	&=\left\{\sup_{0\leq t\leq t_{k+1}}\|X(t)\|>2 C_4\psi(t_k)\right\}\cup\left\{\sup_{t_{k+1}\leq t\leq t_k}\|X(t)\|>2C_4\psi(t_k)\right\}\\
 	&\subseteq \left\{\sup_{0\leq t\leq t_{k+1}}\|X(t)\|>2 C_4\psi(t_k)\right\}\\
 	&\quad\;\;\cup\left\{\sup_{t_{k+1}\leq t\leq t_k}\|X(t)-X(t_{k+1})\|+\|X(t_{k+1})\|>2 C_4\psi(t_k)\right\}\\
 	&\subseteq \left\{\sup_{0\leq t\leq t_{k+1}}\|X(t)\|>2 C_4\psi(t_k)\right\} \cup\left\{\|X(t_{k+1})\|> C_4\psi(t_k)\right\}\\
 	&\quad\;\;\cup\left\{\sup_{t_{k+1}\leq t\leq t_k}\|X(t)-X(t_{k+1})\|>C_4\psi(t_k)\right\}\\
 	&\subseteq \left\{\sup_{0\leq t\leq t_{k+1}}\|X(t)\|> C_4\psi(t_k)\right\} \cup \left\{\sup_{t_{k+1}\leq t\leq t_k}\|X(t)-X(t_{k+1})\|>C_4\psi(t_k)\right\}\\
 	&= H_k \cup G_k.
 	\end{align*}
 	And for all $m\in\nat$ this gives us
 	\begin{align*}
 	\bigcap_{k=m+1}^{2m} D_k \subseteq \left(\bigcap_{k=m+1}^{2m}G_k\right)\cup\left(\bigcup_{k=m+1}^{2m}H_k\right).
 	\end{align*}
 	Note that the sets $(G_k)_{k\in\nat}$ are pairwise independent. Set $\Prob(G_k)=1-p_k$ and $\Prob(H_k)=q_k$. Applying Lemma \ref{lemmaA2} (a) we have for sufficiently large $k$
 	\begin{align*}
 	p_k &= \Prob\left(M'(t_k)\leq C_4 \psi(t_k)\right)\\
 	& = \Prob\left(\sup_{t_{k+1}\leq t\leq t_k}\|X(t)-X(t_{k+1})\|\leq C_4\psi(t_k)\right)\\
 	& = \Prob\left(\sup_{0\leq t\leq t_k-t_{k+1}}\|X(t)\|\leq C_4\psi(t_k)\right)\\
 	&\geq\Prob\left(\sup_{0\leq t\leq t_k}\|X(t)\|\leq C_4 \cdot t_k^{\frac1{\alpha_1}}\left(\log\log\frac{1}{t_k}\right)^{-\frac1{\alpha_1}}\right)\\
 	&\geq \exp\left(-K_{10}\left(C_4\left(\log\log(1/t_k)\right)^{-\frac1{\alpha_1}}\right)^{-\alpha_1}\right)\\
 	&=\exp\left(-K_{10}\cdot \frac1{3K_{10}}\cdot \log\log\frac1{t_k}\right)\\
 	&=\exp\left(-\frac13\log\log\frac1{t_k}\right)=\exp\left(-\frac13\log k^2\right)=k^{-\frac23}.
 	\end{align*}
 	On the other hand, choosing $\delta'\in(0,\alpha_p)$, $\delta_1:=0$ and $\delta_j=\frac1{\alpha_j}-\frac1{\alpha_1}\in(0,\alpha_j^{-1})$ for $j=2,\ldots,p$, then for sufficiently large $k\in\nat$ we get by Lemma \ref{LemmaA2} and Lemma \ref{TailBehav} 
 	\begin{align*}
 	q_k &= \Prob\left(\sup_{0\leq t\leq t_{k+1}}\|X(t)\|>C_4\psi(t_k)\right)\\
 	&\leq \sum_{j=1}^p\Prob\left(\sup_{0\leq t\leq t_{k+1}}\|X^{(j)}(t)\|>\frac{C_4\cdot t_k^{\frac1{\alpha_1}}(\log\log\frac1{t_k})^{-\frac1{\alpha_1}}}{p}\right)\\
 	& = \sum_{j=1}^p\Prob\left(\sup_{0\leq t\leq t_{k+1}}\|X^{(j)}(t)\|>\frac{C_4\cdot t_k^{\frac1{\alpha_1}}(\log\log\frac1{t_k})^{-\frac1{\alpha_1}}t_{k+1}^{-(\frac1{\alpha_j}-\delta_j)}t_{k+1}^{\frac1{\alpha_j}-\delta_j}}{p}\right)\\
	& \leq \sum_{j=1}^p \Prob\left(\sup_{0\leq t\leq c^{-i_{k+1}}}\|X^{(j)}(t)\|>\frac{C_4\cdot t_k^{\frac1{\alpha_1}}\left(\log\log \frac1{t_k}\right)^{-\frac1{\alpha_1}}t_{k+1}^{-(\frac1{\alpha_j}-\delta_j)}\cdot c^{-(i_{k+1}+1)(\frac1{\alpha_j}-\delta_j)}}{p}\right)\\
 	&\leq \sum_{j=1}^p K^{(j)}\cdot\Prob\left(\|X^{(j)}(c^{-i_{k+1}})\|>\frac{C_4\cdot t_k^{\frac1{\alpha_1}}\left(\log\log \frac1{t_k}\right)^{-\frac1{\alpha_1}}t_{k+1}^{-\frac1{\alpha_1}}\cdot c^{-\frac1{\alpha_1}}\cdot c^{-i_{k+1}(\frac1{\alpha_j}-\delta_j)}}{2p}\right)\\
 	&\leq \sum_{j=1}^p \widetilde K^{(j)}\cdot \left(t_k^{\frac1{\alpha_1}}\left(\log\log \tfrac1{t_k}\right)^{-\frac1{\alpha_1}}t_{k+1}^{-\frac1{\alpha_1}}\right)^{-(\alpha_j-\delta')}\\
 	& \leq \sum_{j=1}^p \widetilde{K}^{(j)}\left(t_k^{-1}t_{k+1}\log\log\tfrac1{t_k}\right)^{\frac{\alpha_j-\delta'}{\alpha_1}}\leq K\cdot \left(t_k^{-1}t_{k+1}\log\log\tfrac1{t_k}\right)^{\frac{\alpha_p-\delta'}{\alpha_1}}\\
 	& = K\cdot\left(\exp\left(k^2-(k+1)^2\right)\cdot\log\left(k^2\right)\right)^{\frac{\alpha_p-\delta'}{\alpha_1}}= K\cdot\left(\exp(-2k-1)\cdot\log\left(k^2\right)\right)^{\frac{\alpha_p-\delta'}{\alpha_1}}\\
 	&\leq\exp(-C_{41}\,k).
 	\end{align*}
 	Hence, there exists $m_0\in\nat$ large enough such that for  all $m>m_0$
 	\begin{align*}
 	&\Prob\left(\bigcap_{k=m+1}^{2m}D_k\right)\leq\Prob\left(\bigcap_{k=m+1}^{2m}G_k\right)+\sum_{k=m+1}^{2m}\Prob(H_k)\\
 	&= \prod_{k=m+1}^{2m}(1-p_k) + \sum_{k=m+1}^{2m}q_k\leq \prod_{k=m+1}^{2m}\exp(-p_k) + \sum_{k=m+1}^{2m}\exp(-C_{41}\,k)\\
 	&\leq \exp\left(-\sum_{k=m+1}^{2m}p_k\right) + C_5\exp(-C_{41}\,m)\\
 	&\leq \exp\left(-\sum_{k=m+1}^{2m}k^{-\frac23}\right) + C_5\exp(-C_{41}\,m)\\
 	&\leq \exp\left(-\sum_{k=m+1}^{2m}(2m)^{-\frac23}\right) + C_5\exp(-C_{41}\,m)\\
 	&= \exp\left(-2^{-\frac23}m^{\frac13}\right) + C_5\exp(-C_{41}\,m)\leq \exp\left(-m^{\frac14}\right).
 	\end{align*}
 	Define
 	\begin{equation*}
 	a_k = 2C_4\psi(t_k) = 2C_4\left(\log k^2\right)^{-\frac1{\alpha_1}}\cdot\exp\left(-{k^2}/{\alpha_1}\right)
 	\end{equation*}
 	Then for $m_0$ sufficiently large one can show that $a_{2m}>a_m^4$, for all $m\geq m_0$. By \eqref{phipsiasym} and the properties of $\phi$, there now exists a positive constant $K_{13}>0$ such that for $k$ large enough
 	\begin{equation*}
 	t_k\sim\phi\left(\frac{a_k}{2C_4}\right)\geq 2 K_{13}\cdot\phi(a_k).
 	\end{equation*}
 	Therefore, using $\{M(t)>a\}=\{P(a)<t\}$, we have for $m\geq m_0$ sufficiently large,
 	\begin{align*}
 	&\Prob\left(\bigcap_{k=m+1}^{2m}D_k\right) = \Prob\left(\bigcap_{k=m+1}^{2m}\left\{M(t_k)>2C_4\psi(t_k)\right\}\right)\\
 	&= \Prob\left(\bigcap_{k=m+1}^{2m}\left\{P(2C_4\psi(t_k))<t_k\right\}\right) \geq \Prob\left(\bigcap_{k=m+1}^{2m}\left\{P(a_k)<K_{13}\phi(a_k)\right\}\right)\\
 	&\geq \Prob\left(\sup_{a_{2m}\leq a\leq a_{m+1}}\frac{P(a)}{\phi(a)}<K_{13}\right).
 	\end{align*}
 	Let $\gamma_0>0$ be small enough such that $\gamma_0\leq\exp(-5m_0^2/\alpha_1)$. Let $0<\gamma<\gamma_0$, $\delta>\gamma^{1/6}$ and $m$ be the largest integer less than $\sqrt{-\frac{\alpha_1}{5}\log\gamma}$. Then
 	\begin{align*}
 	\gamma&\leq \exp(-5m^2/\alpha_1)\leq a_m^4 < a_{2m} < a_{m+1}\\
 	&=2C_4\left(\log\left((m+1)^2\right)\right)^{-1/\alpha_1}\cdot \exp\left(-\frac{(m+1)^2}{\alpha_1}\right)\\
 	&\leq  2 C_4 \left(\log\left((m+1)^2\right)\right)^{-1/\alpha_1}\cdot \exp\left(\frac{\alpha_1\log\gamma}{5\alpha_1}\right)\\
 	& =  2 C_4 \left(\log\left((m+1)^2\right)\right)^{-1/\alpha_1}\cdot\gamma^{1/5}\\
 	& \leq 2 C_4 \left(\log\log\left(\gamma^{-\alpha_1/5}\right)\right)^{-1/\alpha_1}\cdot\gamma^{1/5} \leq\gamma^{1/6}<\delta,
 	\end{align*}
 	and hence
 	\begin{align*}
 	&\Prob\left(\sup_{\gamma\leq a\leq \delta}\frac{P(a)}{\phi(a)}<K_{13}\right)\leq \Prob\left(\bigcap_{k=m+1}^{2m}D_k\right)\leq\exp\left(-m^\frac14\right)\\
 	& \leq \exp\left(-K\cdot(m+1)^\frac14\right) \leq \exp\left(-K\cdot\left(-\frac{\alpha_1}{5}\log\gamma\right)^\frac18\right)\\
 	& \leq \exp\left(-K_{14}\cdot(-\log\gamma)^\frac18\right),
 	\end{align*}
 	which concludes the proof for $X$ of type $A$ and $0<\alpha_1<2$.
 	
 	Now assume that $X$ is of type $B$ and $0<\alpha_1<1$. Then $\phi(a)=a^{\alpha_1}\left(\log\log\frac1a\right)^{1-\alpha_1}$. Define $\psi(s)=s^{1/\alpha_1}\left(\log\log\frac1s\right)^{-\frac{1-\alpha_1}{\alpha_1}}$. Then $\phi$ and $\psi$ are asymptotically inverse to each other as $a, s\to 0$ in the same sense as in \eqref{phipsiasym}. Again consider the sequence $t_k=\exp(-k^2)$, $k\geq1$. For all $k$ there exists an $i_k\in\nat$ such that $c^{-(i_k+1)}\leq t_k < c^{-i_k}$. Furthermore, define $C_{6}:=(3K_{11})^{(1-\alpha_1)/\alpha_1}$, where $K_{11}$ is as in Lemma \ref{lemmaA2} (b), and let
 	\begin{equation*}
 	D_k:=\left\{\frac{M(t_k)}{\psi(t_k)}>2\cdot C_{6}\right\},\; G_k:= \left\{\frac{M'(t_k)}{\psi(t_k)}> C_{6}\right\} \text{ and } H_k:= \left\{\frac{M(t_{k+1})}{\psi(t_k)}> C_{6}\right\}.
 	\end{equation*}
 	With the same methods as before one can show that for all $m\in\nat$
 	\begin{align*}
 	\bigcap_{k=m+1}^{2m} D_k \subseteq \left(\bigcap_{k=m+1}^{2m}G_k\right)\cup\left(\bigcup_{k=m+1}^{2m}H_k\right).
 	\end{align*}
 	Set $\Prob(G_k)= 1-p_k$ and $\Prob(H_k)=q_k$. Applying Lemma \ref{lemmaA2}(ii) we have for sufficiently large $k$
 	\begin{align*}
 	p_k&\geq\Prob\left(\sup_{t_{k+1}\leq t\leq t_k}\|X(t)-X(t_{k+1})\|>C_{6}\psi(t_k)\right)\\
 	&\geq\Prob\left(\sup_{0\leq t\leq t_k}\|X(t)\|\leq C_{6}\cdot t_k^{1/\alpha_1}\left(\log\log\frac1{t_k}\right)^{-\frac{1-\alpha_1}{\alpha_1}}\right)\\
 	&\geq \exp\left(-K_{11}\left(C_{6}\left(\log\log (1/{t_k})\right)^{-\frac{1-\alpha_1}{\alpha_1}}\right)^{-\frac{\alpha_1}{1-\alpha_1}}\right)\\
 	& = \exp\left(-\frac{K_{11}}{3K_{11}}\log\log\frac1{t_k}\right) = k^{-\frac23}.
 	\end{align*}
On the other hand, choosing $\delta'\in(0,\alpha_p)$, $\delta_1:=0$ and $\delta_j=\frac1{\alpha_j}-\frac1{\alpha_1}\in(0,\alpha_j^{-1})$ for $j=2,\ldots,p$, similarly to type A for sufficiently large $k\in\nat$ we get by Lemma \ref{LemmaA2} and Lemma \ref{TailBehav} 
 	\begin{align*}
 	q_k &\leq \sum_{j=1}^p\Prob\left(\sup_{0\leq t\leq t_{k+1}}\|X^{(j)}(t)\|>\frac{C_{6}\cdot t_k^{\frac1{\alpha_1}}(\log\log\frac1{t_k})^{-\frac{1-\alpha_1}{\alpha_1}}}{p}\right)\\
         & = \sum_{j=1}^p\Prob\left(\sup_{0\leq t\leq t_{k+1}}\|X^{(j)}(t)\|>\frac{C_6\cdot t_k^{\frac1{\alpha_1}}(\log\log\frac1{t_k})^{-\frac{1-\alpha_1}{\alpha_1}} t_{k+1}^{-(\frac1{\alpha_j}-\delta_j)}t_{k+1}^{\frac1{\alpha_j}-\delta_j}}{p}\right)\\
	& \leq \sum_{j=1}^p \Prob\left(\sup_{0\leq t\leq c^{-i_{k+1}}}\|X^{(j)}(t)\|>\frac{C_6\cdot t_k^{\frac1{\alpha_1}}\left(\log\log \frac1{t_k}\right)^{-\frac{1-\alpha_1}{\alpha_1}} t_{k+1}^{-(\frac1{\alpha_j}-\delta_j)}\cdot c^{-(i_{k+1}+1)(\frac1{\alpha_j}-\delta_j)}}{p}\right)\\
 	&\leq \sum_{j=1}^p K^{(j)}\cdot\Prob\left(\|X^{(j)}(c^{-i_{k+1}})\|>\frac{C_6\cdot t_k^{\frac1{\alpha_1}}\left(\log\log \frac1{t_k}\right)^{-\frac{1-\alpha_1}{\alpha_1}} t_{k+1}^{-\frac1{\alpha_1}}\cdot c^{-\frac1{\alpha_1}}\cdot c^{-i_{k+1}(\frac1{\alpha_j}-\delta_j)}}{2p}\right)\\
 	&\leq \sum_{j=1}^p \widetilde K^{(j)}\cdot \left(t_k^{\frac1{\alpha_1}}\left(\log\log \tfrac1{t_k}\right)^{-\frac{1-\alpha_1}{\alpha_1}}t_{k+1}^{-\frac1{\alpha_1}}\right)^{-(\alpha_j-\delta')}\\
 	& \leq \sum_{j=1}^p \widetilde{K}^{(j)} \left(t_k^{-1}t_{k+1}\left(\log\log\tfrac1{t_k}\right)^{1-\alpha_1}\right)^{\frac{\alpha_j-\delta'}{\alpha_1}}\leq K\cdot \left(t_k^{-1}t_{k+1}\left(\log\log\tfrac1{t_k}\right)^{1-\alpha_1}\right)^{\frac{\alpha_p-\delta'}{\alpha_1}}\\
 	& = K\cdot\left(\exp\left(k^2-(k+1)^2\right)\cdot\left(\log\left(k^2\right)\right)^{1-\alpha_1}\right)^{\frac{\alpha_p-\delta'}{\alpha_1}}\\
	& = K\cdot\left(\exp(-2k-1)\cdot\left(\log\left(k^2\right)\right)^{1-\alpha_1}\right)^{\frac{\alpha_p-\delta'}{\alpha_1}}\leq \exp(-C_{61}\,k).
 	\end{align*}
Analogously to the calculations in type A above, we can now prove that there exist constants $K_{13}, K_{14}, \gamma_0>0$ such that
 	\begin{align}
 	\Prob\left(\sup_{\gamma\leq a\leq\delta}\frac{P(a)}{\phi(a)}<K_{13}\right) \leq \exp\left(-K_{14}\cdot\left(-\log\gamma\right)^{\frac18}\right),
 	\end{align}
 	provided $0<\gamma\leq\gamma_0$ and $\delta\geq\gamma^{1/6}$. This concludes the proof.
 \end{proof}
 
% ------------- Upper Bound T(a,1) ---------------
 
 \begin{cor}\label{UpperBoundTa1}
 	Let $X$ be a $(c^E,c)$-operator semistable L\'evy process on $\rd$ with diagonal principal exponent $E_1$ and $\alpha_1<d_1$. Then for $\phi$ as in \eqref{phi} there exist constants $K_{13}$, $K_{14}, \gamma_0 > 0$ such that
 	\begin{align}
 	\Prob\left(\sup_{\gamma\leq a\leq\delta}\frac{T(a,1)}{\phi(a)}<K_{13}\right) \leq \exp\left(-K_{14}\cdot\left(-\log\gamma\right)^{\frac18}\right),
 	\end{align}
 	for all $0<\gamma\leq\gamma_0$ and $\delta\geq\gamma^{1/6}$.
 \end{cor}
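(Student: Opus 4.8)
The plan is to derive Corollary~\ref{UpperBoundTa1} directly from Lemma~\ref{UpperBoundPa} by comparing the sojourn time with the first exit time. The elementary pathwise fact that makes this work is that for every $a>0$ one has, almost surely,
\begin{equation*}
T(a,1)=\int_0^11_{B(0,a)}(X(t))\,dt\ \geq\ 1\wedge P(a),
\end{equation*}
since by the definition of $P(a)$ we have $\|X(t)\|\leq a$, hence $X(t)\in B(0,a)$, for every $t<P(a)$, so the integrand is identically $1$ on the interval $[0,1\wedge P(a))$. Note that this comparison is insensitive to the precise form of the gauge function, so it handles type $A$ and type $B$ simultaneously; the role of $\phi$ enters only through which gauge from \eqref{phi} we feed into Lemma~\ref{UpperBoundPa}.

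First I would reduce to the case $\delta=\gamma^{1/6}$. Taking $\gamma_0\leq 1$ we have $\gamma<\gamma^{1/6}\leq\delta$, so $[\gamma,\gamma^{1/6}]\subseteq[\gamma,\delta]$, and therefore
\begin{equation*}
\Prob\left(\sup_{\gamma\leq a\leq\delta}\frac{T(a,1)}{\phi(a)}<K_{13}\right)\leq\Prob\left(\sup_{\gamma\leq a\leq\gamma^{1/6}}\frac{T(a,1)}{\phi(a)}<K_{13}\right),
\end{equation*}
where $K_{13}$ is the constant from Lemma~\ref{UpperBoundPa}; it thus suffices to bound the right-hand side. Next, since $\phi\in\Phi$ is increasing near the origin with $\phi(0+)=0$, I would shrink $\gamma_0$ if necessary so that $\phi$ is increasing on $(0,\gamma_0^{1/6})$ and $\phi(\gamma_0^{1/6})\leq 1/K_{13}$. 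Then for every $\gamma\leq\gamma_0$ and every $a\in[\gamma,\gamma^{1/6}]$ we have $\phi(a)\leq\phi(\gamma^{1/6})\leq 1/K_{13}$, that is, $1/\phi(a)\geq K_{13}$.

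Now I would argue pathwise on the event $A:=\{\sup_{\gamma\leq a\leq\gamma^{1/6}}T(a,1)/\phi(a)<K_{13}\}$. Fix $a\in[\gamma,\gamma^{1/6}]$. If $P(a)>1$, the displayed bound gives $T(a,1)\geq 1$, whence $T(a,1)/\phi(a)\geq 1/\phi(a)\geq K_{13}$, contradicting $A$; so on $A$ we must have $P(a)\leq 1$, and then $P(a)/\phi(a)=(1\wedge P(a))/\phi(a)\leq T(a,1)/\phi(a)<K_{13}$. Hence $A\subseteq\{\sup_{\gamma\leq a\leq\gamma^{1/6}}P(a)/\phi(a)<K_{13}\}$, and Lemma~\ref{UpperBoundPa} (applied with the possibly shrunk $\gamma_0$ and with $\delta=\gamma^{1/6}$, which satisfies $\delta\geq\gamma^{1/6}$) gives the asserted bound $\exp(-K_{14}(-\log\gamma)^{1/8})$. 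I do not expect any genuine obstacle here: all the analytic substance is already contained in Lemma~\ref{UpperBoundPa}, and the only points requiring care are the bookkeeping with the admissible range of $\delta$ and the harmless exclusion of the case $P(a)>1$, both handled by the two reductions above.
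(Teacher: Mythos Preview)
Your proof is correct and follows essentially the same route as the paper: both reduce the sojourn-time event to the exit-time event via the pathwise comparison $T(a,1)\geq 1\wedge P(a)$ (the paper phrases this as ``$T(a,1)\leq t<1$ implies $P(a)\leq t$''), and then invoke Lemma~\ref{UpperBoundPa}. Your explicit reduction to $\delta=\gamma^{1/6}$ is a slightly more careful treatment of the admissible range of $\delta$ than the paper's brief ``provided $\delta$ and therefore $\gamma$ small enough,'' but the substance is identical.
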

 
 \begin{proof}
 	Obviously, $T(a,1)\leq t<1$ implies that $P(a)\leq t$. This gives us
 	\begin{align*}
 	\left\{\sup_{\gamma\leq a\leq\delta}\frac{T(a,1)}{\phi(a)}<K_{13}\right\}\subseteq  	\left\{\sup_{\gamma\leq a\leq\delta}\frac{P(a)}{\phi(a)}<K_{13}\right\},
 	\end{align*}
 	provided $\delta$ and therefore $\gamma$ small enough to ensure that $\phi$ is increasing on $(0,\delta)$ and $K_{13}\phi(\delta)<1$. Lemma \ref{UpperBoundPa} then concludes the proof.
 \end{proof}
 
 Let $K_3>0$ be a fixed constant. A family $\Lambda(a)$ of cubes of side $a$ in $\rd$ is called $K_3$-nested if no balls of radius $a$ in $\rd$ can intersect more than $K_3$ cubes of $\Lambda(a)$. Here, we will choose $\Lambda(a)$ to be the family of all cubes in $\rd$ of the form $[k_1a,(k_1+1)a]\times\ldots\times[k_da,(k_d+1)a]$ with $K_3=3^d$. The following covering lemma is due to Pruitt and Taylor \cite[Lemma 6.1]{PT}
 
 \begin{lemma}\label{PT}
 	Let $X=\{X(t)\}_{t\geq0}$ be a  L\'evy process in $\rd$ and let $\Lambda(a)$ be a fixed $K_{3}$-nested family of cubes in $\rd$ of side $a$ with $0<a\leq1$. For any $u\geq0$ let $M_{u}(a,s)$ be the number of cubes in $\Lambda(a)$ hit by $X(t)$ at some time $t\in [u,u+s]$. Then
 	$$\Exp\left[M_{u}(a,s)\right]\leq 2\,K_{3}s\cdot\left(\Exp\left[T\left(\tfrac{a}{3},s\right)\right]\right)^{-1}.$$
 \end{lemma}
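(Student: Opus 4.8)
The plan is to adapt the classical occupation-time argument of Pruitt and Taylor, comparing the cube count $M_u(a,s)$ with a sum of sojourn times that is controlled from above by the $K_3$-nested structure. For a cube $Q\in\Lambda(a)$ write $\sigma_Q=\inf\{t\geq u:X(t)\in Q\}$ for the first time from $u$ on at which $X$ visits $Q$. Since the cubes are closed and $X$ has c\`adl\`ag paths, each $\sigma_Q$ is a stopping time and $X(\sigma_Q)\in Q$ on $\{\sigma_Q<\infty\}$; moreover $Q$ is hit by $X$ during $[u,u+s]$ if and only if $\sigma_Q\leq u+s$, so that, $\Lambda(a)$ being countable,
\[
M_u(a,s)=\sum_{Q\in\Lambda(a)}1_{\{\sigma_Q\leq u+s\}}.
\]

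Next I would introduce the auxiliary random variable
\[
\Xi:=\sum_{Q\in\Lambda(a)}1_{\{\sigma_Q\leq u+s\}}\int_{\sigma_Q}^{\sigma_Q+s}1_{B(X(\sigma_Q),a/3)}(X(t))\,dt
\]
and evaluate its expectation. By the strong Markov property at $\sigma_Q$ the shifted process $\{X(\sigma_Q+r)-X(\sigma_Q)\}_{r\geq0}$ is, conditionally on $\mathcal{F}_{\sigma_Q}$ on the event $\{\sigma_Q<\infty\}$, an independent copy of $X$; hence the inner integral equals $\int_0^s1_{B(0,a/3)}(X(\sigma_Q+r)-X(\sigma_Q))\,dr$ and has conditional expectation equal to the deterministic constant $\Exp[T(a/3,s)]$. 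Since $\{\sigma_Q\leq u+s\}\in\mathcal{F}_{\sigma_Q}$, taking expectations termwise and summing over $Q$ gives
\[
\Exp[\Xi]=\Exp[T(a/3,s)]\cdot\sum_{Q\in\Lambda(a)}\Prob(\sigma_Q\leq u+s)=\Exp[T(a/3,s)]\cdot\Exp[M_u(a,s)].
\]

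The second ingredient is the deterministic bound $\Xi\leq2K_3s$. Interchanging the (countable) sum and the integral and observing that $\sigma_Q\in[u,u+s]$ confines the integration variable to $[u,u+2s]$,
\[
\Xi=\int_u^{u+2s}\#\bigl\{Q\in\Lambda(a):\ \sigma_Q\leq t\leq\sigma_Q+s,\ \|X(t)-X(\sigma_Q)\|\leq a/3\bigr\}\,dt .
\]
For fixed $t$, any cube $Q$ counted in the integrand satisfies $X(\sigma_Q)\in Q\cap B(X(t),a/3)$, so $Q$ meets $B(X(t),a/3)$ and hence also the larger ball $B(X(t),a)$; by the $K_3$-nested property at most $K_3$ cubes of $\Lambda(a)$ can meet a ball of radius $a$, so the integrand is at most $K_3$ and $\Xi\leq2K_3s$. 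Combining this with the preceding identity and dividing by $\Exp[T(a/3,s)]$ — which is strictly positive, since a L\'evy process started at $0$ almost surely spends positive time in $B(0,a/3)$ — gives $\Exp[M_u(a,s)]\leq2K_3s\,(\Exp[T(a/3,s)])^{-1}$, as claimed.

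The argument is largely routine; the points deserving care are the (standard) measure-theoretic fact that $\sigma_Q$ is a stopping time with $X(\sigma_Q)\in Q$, which rests on right-continuity of the paths and closedness of the cubes, and the coordinated choice of the radius $a/3$ together with the time window $[\sigma_Q,\sigma_Q+s]$. The latter is precisely what forces all cubes relevant at a given time $t$ into a single ball of radius at most $a$, so that the nesting constant $K_3$ applies without enlargement, and it is also what produces the factor $2s$ (from the range $[u,u+2s]$ of integration) and hence exactly the stated constant $2K_3s$.
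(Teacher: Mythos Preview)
Your argument is correct and is precisely the classical occupation-time proof of Pruitt and Taylor \cite[Lemma~6.1]{PT}; the paper does not supply its own proof but simply cites this reference, so there is nothing further to compare.
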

 
 For $u=0$ we simply write $M(a,s):=M_0(a,s)$. The following result is a direct consequence of  Lemma \ref{sojourndiag} and Lemma \ref{PT}. Although part (ii) is not needed here, it might be useful to show that $\phi-m(X([0,1]))<\infty$ for $\phi$ as in \eqref{phitypb} in case $X$ is of type $A$, $d\geq2$ and $\alpha_1>d_1=1$.
 
 \begin{lemma}\label{NumberCubes}
 	Let $X$ be a $(c^E,c)$-operator semistable L\'evy process on $\rd$ with diagonal principal exponent $E_1$.
 	\begin{itemize}
 		\item[(i)] If $\alpha_1<d_1$, there exists a constant $K_{15}>0$ such that for all $a\leq1$
 		\begin{equation*}
 		\Exp[M(a,1)]\leq K_{15}a^{-\alpha_1}.
 		\end{equation*}
 		\item[(ii)] If $d\geq2$ and $\alpha_1>d_1$, then $d_1=1$ and we further assume that $E_2$ is diagonal. Then there exists a constant $K_{16}>0$ such that for all $a>0$ small enough
 		\begin{equation*}
 		\Exp[M(a,1)]\leq K_{16}a^{-\rho},
 		\end{equation*}
 		where $\rho=1+\alpha_2(1-1/\alpha_1)$.
 	\end{itemize}
 \end{lemma}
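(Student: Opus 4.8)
The plan is to combine the covering estimate of Lemma~\ref{PT} with the sharp two-sided bounds on expected sojourn times from Lemma~\ref{sojourndiag}. Concretely, I would apply Lemma~\ref{PT} with $u=0$ and $s=1$ to obtain
$$\Exp[M(a,1)]\leq 2K_3\left(\Exp\left[T\left(\tfrac a3,1\right)\right]\right)^{-1}$$
for every $0<a\leq1$, so that the whole problem reduces to bounding $\Exp[T(a/3,1)]$ from below.

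For part~(i), since $\alpha_1<d_1$, I would invoke Lemma~\ref{sojourndiag}(i) with ball radius $a/3$ and time horizon $s=1$. The admissibility condition $(a/3)^{\alpha_1}\leq s=1$ holds whenever $a\leq1$, so Lemma~\ref{sojourndiag}(i) yields $\Exp[T(a/3,1)]\geq K_4(a/3)^{\alpha_1}=K_4\,3^{-\alpha_1}a^{\alpha_1}$. Substituting this into the displayed estimate gives $\Exp[M(a,1)]\leq 2K_3\,3^{\alpha_1}K_4^{-1}a^{-\alpha_1}$, which is the claim with $K_{15}:=2K_3\,3^{\alpha_1}/K_4$.

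For part~(ii), under the stated hypotheses ($d\geq2$, $\alpha_1>d_1$, hence $d_1=1$, and $E_2$ diagonal) I would argue in exactly the same way but using Lemma~\ref{sojourndiag}(ii). Here the lower bound $\Exp[T(b,s)]\geq K_6 b^{\rho}$ is only available for $0<b\leq a_0$ and $b^{\alpha_2}\leq s\leq1$; taking $b=a/3$ and $s=1$, both restrictions are met once $a$ is small enough (namely $a\leq 3a_0$ and $a\leq3$). This gives $\Exp[T(a/3,1)]\geq K_6\,3^{-\rho}a^{\rho}$ and hence $\Exp[M(a,1)]\leq 2K_3\,3^{\rho}K_6^{-1}a^{-\rho}$ for all sufficiently small $a$, i.e.\ the claim with $K_{16}:=2K_3\,3^{\rho}/K_6$.

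Since Lemmas~\ref{PT} and~\ref{sojourndiag} already carry all the weight, there is no real obstacle; the only point needing a moment's care is to check that the built-in rescaling $a\mapsto a/3$ in Lemma~\ref{PT} keeps the pair $(a/3,1)$ inside the admissible range of Lemma~\ref{sojourndiag} for the relevant values of $a$, and to keep track of the explicit powers of $3$ so that the resulting $K_{15}$ and $K_{16}$ are genuinely finite and positive.
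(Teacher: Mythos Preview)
Your proposal is correct and matches the paper's own argument, which simply states that the lemma is a direct consequence of Lemma~\ref{sojourndiag} and Lemma~\ref{PT}. The verification that $(a/3,1)$ lies in the admissible range of Lemma~\ref{sojourndiag} and the explicit tracking of the constants are exactly the details one would fill in.
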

 
Let $\Lambda_k$ be the set of cubes of side $2^{1-k}$ and centered at $(j_1/2^k,\ldots,j_d/2^k)$, where $j_l$, $1\leq l\leq d$, are integers, closed on the left and open on the right. The following result is taken from Lemma 3.9 in \cite{HY} and based on Lemma 9 in \cite{T}.
 
 \begin{lemma}\label{Cubes}
 	If $E=\bigcup_{i=1}^m I_i$, where each $I_i$ is a cube of $\Lambda_k$ for some integer $k$, then we can find a subset $\{j_r\}$ such that $E\subseteq\bigcup I_{j_r}$ and no point of $E$ is contained in more than $2^d$ of the cubes $I_{j_r}$.
 \end{lemma}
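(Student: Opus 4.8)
The plan is to keep exactly the cubes that are \emph{maximal} under inclusion and to exploit the lattice structure of the families $\Lambda_k$ to show that such a subfamily automatically has multiplicity at most $2^d$; here it matters that ``each $I_i$ is a cube of $\Lambda_k$ for some integer $k$'' is read with $k$ allowed to depend on $i$, otherwise the statement is immediate. First I would record that every cube of $\Lambda_k$ is a product $\prod_{\ell=1}^d[(n_\ell-1)2^{-k},(n_\ell+1)2^{-k})$ with $n_\ell\in\mathbb Z$, so that inclusion between two cubes from $\bigcup_k\Lambda_k$ holds iff it holds in each coordinate. Let $\mathcal M\subseteq\{I_1,\dots,I_m\}$ be the set of inclusion-maximal cubes and $\{j_r\}$ the corresponding indices. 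Since the collection is finite, every $I_i$ is contained in some member of $\mathcal M$, hence $\bigcup_r I_{j_r}=\bigcup_{i=1}^m I_i=E$, and by maximality $\mathcal M$ is an antichain (no member contains another).

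The core is the claim that for every fixed $x\in\rd$ any pairwise incomparable family of cubes from $\bigcup_k\Lambda_k$ all containing $x$ has at most $2^d$ members, which I would prove coordinatewise. Fix a coordinate $\ell$; for each $k$ exactly two intervals of the one-dimensional grid underlying $\Lambda_k$ contain $x_\ell$, namely $[(m_k-1)2^{-k},(m_k+1)2^{-k})$ and $[m_k2^{-k},(m_k+2)2^{-k})$ with $m_k=\lfloor 2^kx_\ell\rfloor$ (this is where the convention ``closed on the left, open on the right'' is used), and I call these the $\ell$-left and the $\ell$-right interval at scale $k$. Using $m_{k+1}=\lfloor 2^{k+1}x_\ell\rfloor\in\{2m_k,\,2m_k+1\}$, a direct check in both cases shows that the $\ell$-left interval at scale $k+1$ lies inside the $\ell$-left interval at scale $k$, and likewise on the right; by transitivity the $\ell$-left (resp.\ $\ell$-right) intervals containing $x_\ell$ form a decreasing chain in $k$. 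Now assign to each cube $I$ of the given incomparable family the vector $\Phi(I)\in\{\mathrm L,\mathrm R\}^d$ which in coordinate $\ell$ records whether the $\ell$-th factor of $I$ is the left or the right interval at the scale of $I$. If $\Phi(I)=\Phi(I')$ with the scale of $I$ no larger than that of $I'$, then in every coordinate the factor of $I'$ sits inside the factor of $I$ by the chain property, so $I'\subseteq I$, and incomparability forces $I=I'$. Hence $\Phi$ is injective and the family has at most $|\{\mathrm L,\mathrm R\}^d|=2^d$ elements.

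Combining the two steps finishes the proof: $\bigcup_r I_{j_r}=E$ by the first step, and for any $x\in E$ the cubes $I_{j_r}$ containing $x$ form an incomparable subfamily of the antichain $\mathcal M$, hence at most $2^d$ of them. I expect the only genuine work to be the claim, and within it the verification that the left and right ``towers'' of intervals around $x_\ell$ are truly nested across scales; this is the feature that replaces the usual disjointness of dyadic cubes and that pins the constant at $2^d$, which is precisely the multiplicity with which a single grid $\Lambda_k$ already covers $\rd$.
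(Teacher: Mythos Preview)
The paper does not give its own proof of this lemma; it simply cites Lemma~3.9 in Hou--Ying \cite{HY} and Lemma~9 in Taylor \cite{T}. Your argument is therefore not ``the same'' as the paper's, but it is a correct and self-contained proof of the stated result.

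Your approach---select the inclusion-maximal cubes to form an antichain, then bound the local multiplicity by $2^d$ via the coordinatewise ``left/right tower'' labelling---is essentially the argument behind Taylor's Lemma~9, carried out in detail. The key computation, that the left intervals $[(m_k-1)2^{-k},(m_k+1)2^{-k})$ and the right intervals $[m_k2^{-k},(m_k+2)2^{-k})$ containing $x_\ell$ are each nested as $k$ increases (checked via $m_{k+1}\in\{2m_k,2m_k+1\}$), is correct, and it is exactly what makes the map $\Phi:\mathcal M\to\{\mathrm L,\mathrm R\}^d$ injective on any antichain of cubes through a fixed point. One small point worth making explicit: if several of the original $I_i$ coincide as sets, your definition of $\mathcal M$ as a \emph{set} of cubes automatically removes duplicates, so the selected indices $\{j_r\}$ pick one representative per maximal cube, which is consistent with the statement. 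With that, your proof stands on its own and could replace the bare citation.
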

 
 \begin{theorem}\label{lessthaninfinity}
 	Let $X$ be a $(c^E,c)$-operator semistable L\'evy process on $\rd$ with diagonal principal exponent $E_1$ and $\alpha_1<d_1$. Then for 
 	\begin{align*}
 	\phi(a)=\left\{\begin{array}{ll} a^{\alpha_1}\log\log\frac1a, & \text{if $X$ is of type $A$ and } 0<\alpha_1<2  \\
 	\\
 	a^{\alpha_1}\left(\log\log\frac1a\right)^{1-\alpha_1}, & \text{if $X$ is of type $B$ and } 0<\alpha_1<1\end{array}\right.
 	\end{align*}
 	we have almost surely $\phi-m(X([0,1]))<\infty$.
 \end{theorem}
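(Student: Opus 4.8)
The plan is to build, at a sequence of scales tending to zero, an economical random cover of the range $X([0,1])$ and then pass to the limit by a Borel--Cantelli argument, in the spirit of Taylor \cite{T} and Hou--Ying \cite{HY}; the extra work needed to cope with the semistable scaling \eqref{opsem} and with the non-principal components $E_2,\dots,E_p$ has already been packaged into Lemmas \ref{sojourndiag}, \ref{UpperBoundPa}, \ref{NumberCubes}, \ref{PT} and \ref{Cubes}. Fix $\gamma=\gamma_n\downarrow0$ (say $\gamma_n=2^{-n}$) and put $\delta=\gamma^{1/6}$, so that for $n$ large Lemma \ref{UpperBoundPa} applies on the scale window $[\gamma,\delta]$ with exceptional probability $p_n:=\exp\!\big(-K_{14}(-\log\gamma_n)^{1/8}\big)$. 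It suffices to exhibit, for each $n$, a random countable cover $\mathcal C_n$ of $X([0,1])$ by balls and cubes of diameter at most $2\sqrt d\,\gamma_n^{1/6}\to0$ with
$$\sum_{F\in\mathcal C_n}\phi(|F|)\ \le\ \frac{K_1}{K_{13}}+R_n,\qquad \sum_{n}\Exp[R_n]<\infty;$$
then Markov's inequality and Borel--Cantelli give $R_n\to0$ almost surely, and since $\max_{F\in\mathcal C_n}|F|\to0$ we conclude $\phi\text{-}m(X([0,1]))\le K_1/K_{13}<\infty$ almost surely. The type $A$ and type $B$ cases are handled identically, with $\phi$ as in \eqref{phi} and the natural spatial scale $\gamma_n^{1/\alpha_1}\le\gamma_n^{1/6}=\delta$ (valid since $\alpha_1<2$) used on the exceptional part.

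The cover $\mathcal C_n$ is constructed greedily along stopping times $0=\sigma_0\le\sigma_1\le\cdots$. Given $\sigma_k<1$, apply Lemma \ref{UpperBoundPa} through the strong Markov property to the post-$\sigma_k$ L\'evy process $X_k(t):=X(\sigma_k+t)-X(\sigma_k)$. With conditional probability at least $1-p_n$ there is a radius $a_k\in[\gamma_n,\delta_n]$ whose first-exit time satisfies $P_{X_k}(a_k)\ge K_{13}\phi(a_k)$; in this ``regular'' case set $\sigma_{k+1}=\min\{\sigma_k+P_{X_k}(a_k),1\}$ and put the single ball $\overline B(X(\sigma_k),a_k)$ into $\mathcal C_n$, which covers $X([\sigma_k,\sigma_{k+1}])$. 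In the complementary ``exceptional'' case set $\sigma_{k+1}=\min\{\sigma_k+\gamma_n,1\}$ and put into $\mathcal C_n$ the cubes of a fixed $K_3$-nested family of side $\gamma_n^{1/\alpha_1}$ that meet $X([\sigma_k,\sigma_{k+1}])$ (with overlap at most $K_3$, resp.\ pruned by Lemma \ref{Cubes} if an adaptive-size variant is preferred). Each step advances time by at least $\min\{K_{13}\phi(\gamma_n),\gamma_n\}>0$, so the construction terminates with $\sigma_\kappa=1$ after finitely many steps and $\mathcal C_n$ covers $X([0,1])$.

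For the regular balls the point is that the sojourn slices $[\sigma_k,\sigma_{k+1}]$ arising from regular steps are pairwise disjoint subintervals of $[0,1]$, and a ball of radius $a_k$ is charged to a slice of length $\ge K_{13}\phi(a_k)\ge(K_{13}/K_1)\phi(2a_k)$ (with at most one truncated final slice, whose ball costs at most $K_1\phi(\delta_n)\to0$). Summing the doubling estimate $\phi(2a_k)\le K_1\phi(a_k)$ from \eqref{PropPhi} over the regular steps then gives $\sum\phi(2a_k)\le K_1/K_{13}$ deterministically; crucially the $\log\log$ factor is never estimated, it is absorbed into the slice length. It remains to bound $R_n$, the total $\phi$-cost of the exceptional cubes. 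Lemma \ref{sojourndiag}(i) (applicable since $(\gamma_n^{1/\alpha_1}/3)^{\alpha_1}=3^{-\alpha_1}\gamma_n\le\gamma_n$) gives $\Exp[T(\gamma_n^{1/\alpha_1}/3,\gamma_n)]\ge K_43^{-\alpha_1}\gamma_n$, so by Lemma \ref{PT} each exceptional step contributes on average at most a fixed number of cubes, each of $\phi$-cost of order $\gamma_n(\log\log(1/\gamma_n))^{\max\{1,1-\alpha_1\}}$. However, a naive per-step bound does not suffice here, because the construction has up to a fixed power of $\gamma_n^{-1}$ steps whereas the exceptional probability is only $p_n=\exp(-(\,\text{const})(\log\gamma_n^{-1})^{1/8})$; closing the estimate for $R_n$ requires the more delicate accounting of \cite{T,HY}, combining Lemmas \ref{PT} and \ref{NumberCubes} (including a second-moment/tail estimate for the cube count as in \cite{PT}) with the precise exponential decay of Lemma \ref{UpperBoundPa}, and this is the technical crux of the proof.

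The main obstacle is exactly this, and more generally the law-of-the-iterated-logarithm precision built into the gauge $\phi$: a cover at a single spatial scale loses an extra $\log\log$ factor, and applying the semi-selfsimilarity \eqref{opsem} to drive the cube sizes to zero reintroduces such a factor, so one must choose the covering scale adaptively inside the narrow window $[\gamma,\gamma^{1/6}]$ of Lemma \ref{UpperBoundPa} and carefully weigh its tail against the number of slices. A secondary, by now routine, point is the non-diagonality of the non-principal exponents $E_2,\dots,E_p$, whose effect has already been absorbed into Lemmas \ref{UpperBoundPa}, \ref{NumberCubes} and \ref{TailBehav} through the relaxed rates $\delta_j$.
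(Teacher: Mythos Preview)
Your greedy stopping-time construction differs from the paper's static cube-classification scheme, and your treatment of the ``regular'' steps is actually cleaner than the paper's handling of ``good cubes'': the disjoint time slices $[\sigma_k,\sigma_{k+1}]$ make the bound $\sum_{\text{regular}}\phi(2a_k)\le K_1/K_{13}$ immediate, with no need for the pruning Lemma~\ref{Cubes}. The paper instead enlarges each good cube $I$ of side $2^{-n}$ to a dyadic cube $I'\supseteq B(X(\tau^I),a)$ satisfying $\phi(|I'|)\le K\int_0^21_{I'}(X(t))\,dt$, and then invokes Lemma~\ref{Cubes} to bound overlaps.

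However, your ``exceptional'' part has a genuine gap which you yourself flag but do not close, and the parameters you chose would not close it. With exceptional time advance $\gamma_n$ and cube side $\gamma_n^{1/\alpha_1}$, each exceptional step carries $\phi$-cost of order $\gamma_n\log\log(1/\gamma_n)$, while the total number of steps can be of order $(K_{13}\phi(\gamma_n))^{-1}$, i.e.\ of order $\gamma_n^{-\alpha_1}$ (for $\alpha_1>1$ the regular steps are the shorter ones). The resulting bound on $\Exp[R_n]$ is of order $\gamma_n^{1-\alpha_1}p_n=2^{n(\alpha_1-1)}\exp(-Cn^{1/8})\to\infty$ when $\alpha_1>1$, so neither $\sum_n\Exp[R_n]<\infty$ nor even $\Exp[R_n]\to0$ follows. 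The ``more delicate accounting of \cite{T,HY}'' you invoke is not a plug-in: it is tied to the cube-hitting setup, not to a stopping-time scheme. The paper achieves the needed cancellation by matching the bad spatial scale to $\phi$: bad cubes have side $2^{-n}$ and hence $\phi$-cost of order $2^{-n\alpha_1}\log n$, while by Corollary~\ref{UpperBoundTa1} and Lemma~\ref{NumberCubes}(i) the expected number of bad cubes is at most $p_n\cdot\Exp[M(2^{-n},1)]\le p_n K_{15}2^{n\alpha_1}$; the product is of order $p_n\log n$, whose tail is summable and Borel--Cantelli applies. Your scheme can be repaired by the same balance---let exceptional steps advance time by $K_{13}\phi(\gamma_n)$ and use cubes of side $\gamma_n$, so that step count and per-step cost cancel---but as written the argument is incomplete precisely at the point you call its ``technical crux''.
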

 
 \begin{proof}
 	Let $r$ be a positive integer and $\delta:=2^{-r}$. Furthermore, Let $n$ be an integer with $2^{-n}\leq\min(\gamma_0,2^{-6r})$, where $\gamma_0$ is as in Lemma \ref{UpperBoundPa} and $\overline{\Lambda}_n$ the collection of cubes of side $2^{-n}$  with centers the same as in $\Lambda_n$. Define $\tau^I=\inf\;\{t\geq0:X(t)\in I\}$ for any cube $I$ and $\overline{\Lambda'}_n = \{I\in\overline{\Lambda}_n:\tau^I\leq 1\}$, the cubes hit by $X$ over the time interval $[0,1]$. Then $M(2^{-n},1)=|\overline{\Lambda'}_n|$. Let $\gamma_n:=2^{-n}$. We say that a cube $I$ in $\overline{\Lambda'}_n$ is bad if for all $a\in[\gamma_n,\delta]$
 	\begin{equation*}
 	\int_{\tau^I}^{\tau^I+1}1_{B(X(\tau^I),a)}(X(t))dt \leq K_{13}\phi(a),
 	\end{equation*}
 	and good otherwise. For any cube $I\in\overline{\Lambda}_n$ we have
 	\begin{align*}
 	&\Prob\left(I \text{ is bad }| 0\leq\tau^I\leq1\right)\\
 	&= \Prob\left(\sup_{\gamma_n\leq a\leq\delta}\left.\left\{\frac{\int_{\tau^I}^{\tau^I+1}1_{B(X(\tau^I),a)}(X(t))dt}{\phi(a)}\right\}\leq K_{13}\right|0\leq\tau^I\leq1\right)\\
 	&= \Prob\left(\sup_{\gamma_n\leq a\leq\delta}\left.\left\{\frac{\int_{0}^{1}1_{B(0,a)}(X(t+\tau^I)-X(\tau^I))dt}{\phi(a)}\right\}\leq K_{13}\right|0\leq\tau^I\leq1\right).
 	\end{align*}
 	Note that $\{X(t + \tau^I)-X(\tau^I)\}_{t\geq0}$ is identical in law with $\{X(t)\}_{t\geq0}$ on $\{\tau^I<\infty\}$ by the strong Markov property (see e.g. Corollary 40.11 in \cite{Sato}). Hence, we get by applying Corollary \ref{UpperBoundTa1}
 	\begin{align*}
 	\Prob\left(I \text{ is bad }| 0\leq\tau^I\leq1\right)\leq \exp\left(-K_{14}\cdot\left(-\log\gamma_n\right)^{\frac18}\right) = \exp\left(-C_7\cdot n^{\frac18}\right),
 	\end{align*}
 	where $C_7>0$ is a constant independent from $n$. Now let $N_n$ denote the number of bad cubes in $\overline{\Lambda'}_n$. Then by Lemma \ref{NumberCubes}
 	\begin{align*}
 	\Exp[N_n]\leq \exp\left(-C_7\cdot n^{\frac18}\right)\Exp\left[M(2^{-n},1)\right]\leq K_{15} 2^{n\alpha_1}\exp\left(-C_7\cdot n^{\frac18}\right).
 	\end{align*}
 	Hence, by the Markov inequality for $n$ sufficiently large there exists a constant $C_8>0$
 	\begin{align*}
 	&\Prob\left(N_n\geq2^{n\alpha_1}\exp\left(-n^{\frac1{10}}\right)\right) \leq \frac{\Exp\left[|N_n|\right]}{2^{n\alpha_1}\exp\left(-n^{1/10}\right)}\\
 	&\leq K_{15}\exp\left(-C_7\cdot n^{\frac18}+n^{\frac1{10}}\right)\leq K_{15}\exp\left(-C_8\cdot n^{\frac1{10}}\right).
 	\end{align*}
 	This implies that
 	\begin{align*}
 	\sum_{n=1}^{\infty}\Prob\left(N_n\geq2^{n\alpha_1}\exp\left(-n^{1/10}\right)\right)\leq K + K_{15}	\sum_{n=1}^{\infty}\exp\left(-C_8\cdot n^{1/10}\right)<\infty.
 	\end{align*}
 	By the Borel-Cantelli lemma, there now exists an $\Omega_0$ with $\Prob(\Omega_0)=1$ such that for all $\omega\in\Omega_0$ we can find an integer $n_1=n_1(\omega)$ such that for $n\geq n_1$
 	\begin{align*}
 	N_n(\omega)< 2^{n\alpha_1}\exp\left(-n^{1/10}\right).
 	\end{align*}
 	Furthermore, by regular variation techniques, there exists a constant $C_9>0$ such that
 	\begin{align*}
 	\phi\left(\sqrt{d}\cdot 2^{-n}\right) = \left(\sqrt{d}\cdot 2^{-n}\right)^{\alpha_1}\log\log\left(\sqrt{d}\cdot 2^n\right) = C_9\; 2^{-n\alpha_1}\log (n).
 	\end{align*}
 	Thus, for $\omega\in\Omega_0$ and $n\geq n_1(\omega)$
 	\begin{equation}\label{eqnBad}
 	\sum_{I\text{ bad }}\phi(|I|) = N_n(\omega)\cdot \phi\left(\sqrt{d}\cdot 2^{-n}\right)\leq C_9\exp\left(-n^{1/10}\right)\log(n)
 	\end{equation}
 	
 	Now consider the good cubes $I$ in $\overline{\Lambda'}_n$. Our aim is to show that the good cubes can be covered economically. For a good cube $I$ there exists $a\in[\gamma_n,2^{-r}]$ such that
 	\begin{align*}
 	\phi(a)<\frac1{K_{13}}\int_{\tau^I}^{\tau^I+1}1_{B(X(\tau^I),a)}(X(t))dt.
 	\end{align*}
 	We can find an integer $k$ with $2^{-k}>5a\geq 2^{-k-1}$ and a cube $I'$ in $\Lambda_k$ such that $I'$ contains $I$ and $B(X(\tau^I),a)$. Then, one can easily show that $k>r-4$ and, since $\tau^I\leq1$ by definition of $\overline{\Lambda'}_n$, we get
 	\begin{align*}
 	&\phi\left(|I|\right) = \phi\left(\sqrt{d}\cdot 2^{-k+1}\right) = \phi\left(\sqrt{d}\cdot 4\cdot 2^{-k-1}\right)\leq \phi\left(\sqrt{d}\cdot 4\cdot 5a\right)\\
 	&\leq K\phi(a) < K\int_{\tau^I}^{\tau^I+1}1_{B(X(\tau^I),a)}(X(t))dt\leq  K \int_0^2 1_{I'}(X(t))dt.
 	\end{align*} 
 	Applying Lemma \ref{Cubes} to the collection $\{I':I \text{ is good}\}$, we can show that there is a subset $\Lambda'$, which still covers $\bigcup_{I \text{ good }}I$, but no point is covered more than $2^d$ times. Hence, $\sum_{I'\in\Lambda'}1_{I'}\leq2^d$ and there exists a constant $C_{10}>0$ such that
 	\begin{align}\label{eqnGood}
 	\sum_{I'\in\Lambda'}\phi(|I'|)\leq\sum_{I'\in\Lambda'} K \int_0^2 1_{I'}(X(t))dt = K \int_0^2 \sum_{I'\in\Lambda'} 1_{I'}(X(t))dt \leq C_{10} \cdot 2^{d+1}.
 	\end{align}
 	Using all the bad cubes together with the covering of good cubes defined above, we obtain a covering of $X([0,1])$ by cubes with diameters less than $\sqrt{d}\cdot 2^{-r+5}$. This means
 	\begin{align*}
 	X([0,1])\subseteq\left(\bigcup_{I \text{ bad}}I\right)\cup\left(\bigcup_{I'\in\Lambda'} I'\right).
 	\end{align*}
 	For sufficiently large $n$ applying \eqref{eqnBad} and \eqref{eqnGood} we finally arrive at
 	\begin{align*}
 	\sum_{I: \text{ bad }} \phi\left(|I|\right) + \sum_{I'\in\Lambda'}\phi(|I'|) \leq C_9\exp\left(-n^{1/10}\right)\log(n) + C_{10} \cdot 2^{d+1}\leq C_{10} \cdot 2^{d+1} + 1.
 	\end{align*}
 	Thus, $\phi-m(X([0,1]))\leq C_{10} \cdot 2^{d+1} + 1 <\infty$ almost surely, which concludes the proof.
 \end{proof}

% % % % % Proof of main result % % % % %

\subsection{Proof of the main result}

The proof of Theorem \ref{mainresult} now follows directly from Theorem \ref{greaterthanzero} and Theorem \ref{lessthaninfinity}.

\bibliographystyle{plain}

\end{document}